\DeclareSymbolFont{cyrletters}{OT2}{wncyr}{m}{n}
\DeclareMathSymbol{\Sha}{\mathalpha}{cyrletters}{"58}
\newtheorem{theorem}{Theorem}[section]
\newtheorem{lemma}[theorem]{Lemma}
\newtheorem{proposition}[theorem]{Proposition}
\newtheorem{corollary}[theorem]{Corollary}
\theoremstyle{definition}
\newtheorem*{ack}{Acknowledgements}
\newtheorem*{con}{Conventions}
\newtheorem{remark}[theorem]{Remark}
\newtheorem{definition}[theorem]{Definition}
\newtheorem{question}[theorem]{Question}
\numberwithin{equation}{section} \numberwithin{figure}{section}
\DeclareMathOperator{\Pic}{Pic} 
\DeclareMathOperator{\Gal}{Gal} 
\DeclareMathOperator{\Aut}{Aut} 
\DeclareMathOperator{\Spec}{Spec}
\DeclareMathOperator{\an}{an}
 \DeclareMathOperator{\Bl}{Bl}
\DeclareMathOperator{\rank}{rank}
\DeclareMathOperator{\Hom}{Hom} 
\DeclareMathOperator{\im}{Im}   
\DeclareMathOperator{\chr}{char}
\DeclareMathOperator{\Isom}{Isom}
\newcommand{\Index}{r}
\newcommand{\Degree}{d}
\newcommand{\PGL}{\textrm{PGL}}
\newcommand{\Qbar}{\overline{\QQ}}
\newcommand\PP{\mathbb{P}}
\newcommand\ZZ{\mathbb{Z}}
\newcommand\NN{\mathbb{N}}
\newcommand\QQ{\mathbb{Q}}
\newcommand\CC{\mathbb{C}}
\newcommand\OO{\mathcal{O}}
\newcommand\Slist{\mathscr{S}_{ep}}
\newcommand\Tlist{\mathscr{T}_{or}}
\title[Fano threefolds and sextic surfaces]{Good reduction of Fano threefolds and sextic surfaces}
\author{Ariyan Javanpeykar}
\address{Ariyan Javanpeykar \\
Institut f\"{u}r Mathematik\\
Johannes Gutenberg-Universit\"{a}t Mainz\\
Staudingerweg 9, 55099 Mainz\\
Germany.}
\email{peykar@uni-mainz.de}
\author{Daniel Loughran}
\address{Daniel Loughran \\
Leibniz Universit\"{a}t Hannover,
Institut f\"{u}r Algebra, Zahlentheorie
    und Diskrete Mathematik\\
Welfengarten 1\\
30167 Hannover\\
Germany.}
\email{loughran@math.uni-hannover.de}
\subjclass[2010]
{11G35  
(14J45, 
14K30,  
14C34,  
14G40,  
14D23)}  
\keywords{Fano varieties, Shafarevich conjecture, good reduction, infinitesimal Torelli, stacks, period maps, intermediate Jacobian, integral points, hyperbolicity}
\begin{document}
 
\begin{abstract}
	We investigate versions of the Shafarevich conjecture, 
	as proved for curves and abelian varieties by Faltings, for other classes of varieties.
	We first obtain analogues for certain Fano threefolds.
	We use these results to prove  the Shafarevich conjecture
	for smooth sextic surfaces, which appears to be the first non-trivial result in the literature
	on the arithmetic of such surfaces.
	Moreover, we exhibit certain moduli stacks of Fano varieties which are not hyperbolic,
	which allows us to show that the analogue of the Shafarevich conjecture does not always
	hold for Fano varieties.
	Our results also provide new examples for which the conjectures of Campana and Lang--Vojta hold.
\end{abstract}

\maketitle
\tableofcontents

\thispagestyle{empty}

\section{Introduction}
A   theorem of Faltings \cite{Faltings2} (formerly the Shafarevich conjecture), states that given a number field $K$, a finite set $S$ of finite places of $K$ and an integer $g$, there are only finitely many isomorphism classes of $g$-dimensional abelian varieties over $K$ with good reduction outside $S$. Analogues of this result have been obtained for several other classes of varieties, such as cubic fourfolds and very polarised hyperk\"{a}hler varieties of bounded degree \cite{Andre},  K$3$ surfaces \cite{She14}, del Pezzo surfaces \cite{Scholl}, flag varieties \cite{JL2}, certain surfaces of general type \cite{Jav15}, and complete intersections of Hodge level at most $1$ \cite{JL}.

In this paper we study analogues of the Shafarevich conjecture for Fano threefolds. 
To state our results, we recall some important geometric invariants. Let $X$ be a Fano threefold over a field $k$. We define the (geometric) \emph{Picard number} of $X$ to be the rank of the Picard group of $X_{\bar{k}}$. We define its (geometric) \emph{index} $\Index(X)$ to be the largest $r \in \NN$ such that $-K_X$ is divisible by $r$ in $\Pic X_{\bar{k}}$, and its (geometric) \emph{degree} to be $(-K_X)^3/\Index(X)^3$. These invariants naturally arise in the classification of Fano threefolds (see \cite{IskPro} for details). We recall in Section \ref{section:clas} the cases of the classification relevant to us in this paper.
Our first result is the following.

\begin{theorem}\label{theorem1}
	Let $K$ be a number field and  let $S$ be a finite set of  finite places of $K$.  Then the set of $K$-isomorphism classes of   Fano threefolds of  Picard number $1$ and index at least $2$ over $K$
	 with good reduction outside of $S$  is finite.
\end{theorem}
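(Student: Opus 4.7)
The plan is to exploit the classification recalled in Section~\ref{section:clas}. Over an algebraically closed field, a smooth Fano threefold of Picard number $1$ and index $\Index \geq 2$ is isomorphic to one of $\mathbb{P}^3$ (index $4$), a smooth quadric $Q_3 \subset \mathbb{P}^4$ (index $3$), or a del Pezzo threefold $V_d$ of degree $d \in \{1,\dots,5\}$ (index $2$). I would argue separately in each of these seven geometric types, passing in each case to an auxiliary object (a Brauer--Severi variety, a quadratic form, a smooth complete intersection of low Hodge level, or a polarised K3 surface) for which a Shafarevich-type theorem is already available, and then bounding the datum needed to recover the original Fano threefold.

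For $\PP^3$ the $K$-forms are Brauer--Severi threefolds, and good reduction outside $S$ forces the corresponding Azumaya algebra to be unramified outside $S$, so finiteness follows from the finiteness of $\Br(\OO_{K,S})[4]$, a consequence of class field theory. The quintic del Pezzo threefold $V_5$ is geometrically unique with $\Aut(V_5) = \PGL_2$, so an entirely analogous argument via $H^1(K,\PGL_2)$ applies. For the smooth quadric $Q_3$, $K$-forms correspond to nondegenerate quadratic forms in five variables up to scalar, and good reduction translates into the form being defined over $\OO_{K,S}$ with $S$-unit discriminant; finiteness is classical. For $V_3$ (the cubic threefold) and $V_4$ (an intersection of two quadrics in $\PP^5$), which are smooth complete intersections of Hodge level $\leq 1$ in projective space, I would directly invoke \cite{JL}.

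The cases $V_2$ and $V_1$ are the most substantial and require genuinely geometric input via their natural double cover structures. A smooth $V_2$ is a double cover of $\PP^3$ ramified along a smooth quartic surface $B$, which is a polarised K3; good reduction of $V_2$ outside $S$ propagates to good reduction of the pair $(\PP^3, B)$ and hence of $(B, \OO(1))$, so the Shafarevich theorem for polarised K3 surfaces \cite{She14} provides finitely many possibilities for $B$. The twisting datum recovering $V_2$ from $B$ is a square root of an $S$-unit, i.e.\ a class in $K^\times / K^{\times 2}$ unramified outside $S$, which is a finite set. The case $V_1$, realised as a sextic in $\PP(1,1,1,2,3)$, is treated analogously via its canonical double cover structure over the cone on the Veronese surface, with branch divisor again a K3 to which \cite{She14} applies and a further finite set of twists.

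The principal obstacle will be verifying that good reduction of $X$ propagates to good reduction of the appropriate polarised auxiliary object: this requires constructing a smooth integral model of $X$ over $\OO_{K,S}$ and showing that the morphism defined by $|{-}K_X/\Index(X)|$ extends integrally with the expected branch behaviour, which is delicate in the weighted projective setting of $V_1$ and in the presence of nontrivial automorphism schemes. It will likely be cleaner to phrase this propagation stack-theoretically, in terms of a morphism from the moduli stack of Fano threefolds to the moduli stack of the auxiliary object, and to check that it is quasi-finite and sends points with good reduction to points with good reduction. Once this is in place, the cohomological bounds on the twisting data are routine in each case.
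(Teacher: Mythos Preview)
Your case-by-case strategy is reasonable, and for $\PP^3$, $Q_3$, $V_3$, $V_4$, $V_5$ you are essentially on the same track as the paper (which cites \cite{JL2} for the flag-variety cases and handles $V_5$ via infinitesimal rigidity, \'etale-local uniqueness over the base, and the finiteness of $\mathrm{H}^1(B,\Aut_B(X))$ from \cite{GilleMoretBailly}). For $V_1,\dots,V_4$, however, the paper takes a \emph{different and uniform} route: it uses the intermediate Jacobian $J(X)$, which descends to $K$ by \cite{ACM}; N\'eron--Ogg--Shafarevich plus Faltings then gives finitely many possible $J(X)$, hence finitely many polarised Hodge structures $\mathrm{H}^3(X_\CC,\ZZ)$; an infinitesimal Torelli theorem (Proposition~\ref{prop:inf_Torelli}) makes the period map quasi-finite, and a twist-finiteness argument (Proposition~\ref{prop:twists}) concludes. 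No auxiliary surface ever enters.

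Your $V_2$ idea --- pass to the quartic K3 branch locus and invoke \cite{She14} --- is a genuine alternative that could probably be made to work, though you would still need to show that the half-anticanonical morphism and its branch locus descend to $K$ and spread out over $\mathcal{O}_{K,S}$; the fundamental divisor $H$ with $2H=-K_X$ is not automatically defined over $K$.

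The real gap is $V_1$. The double cover $V_1\to\PP(1,1,1,2)$ is branched along a sextic $B\subset\PP(1,1,1,2)$, and adjunction gives
\[
K_B \;=\; \mathcal{O}_{\PP(1,1,1,2)}(-5+6)\big|_B \;=\; \mathcal{O}(1)\big|_B,
\]
which is ample. So $B$ is a smooth surface of general type, not a K3, and \cite{She14} does not apply; there is no available Shafarevich theorem for this class of surfaces to substitute. (A smooth member of $|-K_{V_1}|$ \emph{is} a K3, but that is not the branch locus and does not obviously determine $V_1$.) The paper's intermediate-Jacobian method handles $V_1$ with no extra difficulty, since $h^{3,0}(V_1)=0$ and $h^{2,1}(V_1)=21$ make $J(V_1)$ an honest $21$-dimensional principally polarised abelian variety.
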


Here by good reduction, we mean \emph{good reduction as a Fano variety} (see Definition \ref{def:good_reduction}).
Certain cases of Theorem \ref{theorem1} are already known, and follow from our work \cite{JL} on complete intersections
(e.g.~cubic threefolds and intersections of two quadrics in $\PP^5$). New cases include hypersurfaces of degree
$4$ and $6$ in the weighted projective spaces $\PP(2,1,1,1,1)$ and $\PP(3,2,1,1,1)$, respectively.

We also address some Fano threefolds of Picard number $1$ and index $1$. Recall that for such a threefold $X$, its \emph{genus} is defined to be the integer $(-K_X)^3/2 + 1$. Our work on complete intersections  \cite{JL}  proves versions of the Shafarevich conjecture for such Fano threefolds of genus $4$ and $5$. Our next result handles the case of genus $2$.
 
 \begin{theorem}\label{theorem2}
 Let $K$ be a number field and  let $S$ be a finite set of  finite places of $K$.  Then the set of $K$-isomorphism classes of   Fano threefolds of Picard number $1$, index $1$ and genus $2$  over $K$
	 with good reduction outside of $S$ is finite.
 \end{theorem}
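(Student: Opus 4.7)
The plan is to reduce Theorem \ref{theorem2} to Faltings' finiteness theorem for abelian varieties via the intermediate Jacobian. By the Iskovskikh classification recalled in Section \ref{section:clas}, a smooth Fano threefold $X$ of Picard number $1$, index $1$, and genus $2$ is a \emph{sextic double solid}: a double cover $\pi : X \to \PP^3$ branched along a smooth sextic surface $B \subset \PP^3$, or equivalently a weighted hypersurface $\{y^2 = f_6(x_0, \ldots, x_3)\}$ of degree $6$ in $\PP(3,1,1,1,1)$. Since $\pi$ is given intrinsically by the anticanonical linear system $|-K_X|$, this description is stable under Galois descent and, after enlarging $S$ to contain the places above $2$ and $3$, extends over $\OO_{K,S}$ whenever $X$ has good reduction as a Fano variety.

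The main steps are then as follows. First, set up the moduli stack $\mathcal{M}$ of sextic double solids as a Deligne--Mumford stack of finite type, so that Fano threefolds as in Theorem \ref{theorem2} with good reduction outside $S$ correspond to $\OO_{K,S}$-points of $\mathcal{M}$. Second, construct a period map $P : \mathcal{M} \to \mathcal{A}_g$ sending $X$ to its intermediate Jacobian $J(X)$, a principally polarized abelian variety of dimension $g = h^{2,1}(X)$: since $h^{3,0}(X) = 0$, this Jacobian is algebraic, and its formation is compatible with smooth proper families, so good reduction of $X$ propagates to good reduction of $J(X)$ after possibly enlarging $S$. Third, verify \emph{infinitesimal Torelli} for sextic double solids, so that $P$ is quasi-finite on stacks. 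Finally, apply Faltings' theorem for $\mathcal{A}_g$ over $\OO_{K,S}$, and use that the generic automorphism group of $X$ is finite (generated by the covering involution of $\pi$) to deduce the asserted finiteness of $K$-isomorphism classes.

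The main obstacle is the verification of infinitesimal Torelli for sextic double solids, since they are weighted, rather than ordinary, hypersurfaces and so fall outside the framework of our previous work \cite{JL}. I expect this to be accessible by adapting the Griffiths residue calculus to the weighted projective space $\PP(3,1,1,1,1)$, or alternatively by exploiting the covering involution of $\pi$ to identify the variation of Hodge structure on $H^3(X)$ with the anti-invariant part of the variation on $H^2(B)$ of the branch sextic, where infinitesimal Torelli for smooth hypersurfaces of high degree is classical.
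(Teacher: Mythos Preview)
Your proposal follows essentially the same route as the paper: intermediate Jacobians plus Faltings yield finiteness of periods, infinitesimal Torelli upgrades this to finiteness of $\CC$-isomorphism classes (quasi-finite Torelli), and a separate argument descends to $K$. Two points where the paper sharpens your outline are worth flagging. First, the infinitesimal Torelli theorem for degree-$6$ hypersurfaces in $\PP(3,1,1,1,1)$ is already in the literature (Saito \cite{Sai86}, invoked in Proposition~\ref{prop:inf_Torelli}), so your ``main obstacle'' dissolves without new work. Second, knowing that $\Aut(X)$ is finite does not by itself bound the number of $K$-forms with good reduction outside $S$; the paper handles this step via separatedness of the moduli stack (Lemma~\ref{lem:separatedness}) together with a Hermite--Minkowski argument (Proposition~\ref{prop:twists}), and your final sentence should be expanded along these lines.
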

  
Whilst these  Fano threefolds are interesting in their own right, their primary interest for us  stems from their relationship to sextic surfaces. Namely, such Fano threefolds are double covers of $\PP^3$ ramified along a smooth sextic surface, and moreover every smooth sextic surface arises this way (see Remark \ref{remark:phil}). Over an algebraically closed field any such sextic is uniquely determined by the associated threefold, however this may not be the case over non-algebraically closed fields. Nevertheless, exploiting this relationship we are able to use Theorem \ref{theorem2} to establish the following.
 
  \begin{theorem}\label{theorem3}
 Let $K$ be a number field and  let $S$ be a finite set of  finite places of $K$.  Then the set of $K$-isomorphism classes of   smooth sextic surfaces in $\mathbb P^2_K$ with good reduction outside of $S$  is finite.
 \end{theorem}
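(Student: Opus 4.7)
The plan is to reduce Theorem~\ref{theorem3} to Theorem~\ref{theorem2} via the classical double-cover description of smooth sextic surfaces. To each smooth sextic $X \subset \PP^3_K$ I would attach the smooth cyclic double cover $\pi_X \colon Y_X \to \PP^3_K$ branched along $X$, realised as $\mathrm{Spec}_{\PP^3}(\OO \oplus \OO(-3))$ with algebra structure determined by a defining equation of $X$. By the classification recalled in Section~\ref{section:clas}, $Y_X$ is a smooth Fano threefold of Picard number $1$, index $1$ and genus $2$. The construction depends only on the abstract $K$-isomorphism class of $X$: since $\Pic(X_{\overline{K}}) = \ZZ \cdot \OO_X(1)$ by Lefschetz, any two projective embeddings of a smooth sextic by $|\OO_X(1)|$ differ by a $\mathrm{PGL}_4(K)$-transformation, under which the resulting double covers are canonically isomorphic.

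Next I would show that good reduction of $X$ outside $S$ forces good reduction of $Y_X$ outside the enlarged set $S' := S \cup \{v \mid 2\}$. Starting from a smooth model $\mathcal{X} \subset \PP^3_{\OO_{K,S}}$, one forms the relative double cover of $\PP^3_{\OO_{K,S'}}$ branched along $\mathcal{X}$ and checks smoothness fibrewise, using that the formation of a $\mu_2$-cover commutes with base change of rings in which $2$ is invertible, together with the fact that smoothness of a double cover of a smooth base is equivalent to smoothness of its branch divisor. Theorem~\ref{theorem2} applied with $S'$ in place of $S$ then yields that only finitely many $K$-isomorphism classes of Fano threefolds $Y_X$ arise from smooth sextics with good reduction outside $S$.

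Finally, I would show that the assignment $X \mapsto Y_X$ has finite fibres on $K$-isomorphism classes of sextics. For any Fano threefold $Y$ of Picard number $1$, index $1$ and genus $2$, one has $h^0(Y, -K_Y) = 4$, and $|-K_Y|$ is basepoint-free and induces a canonical $K$-morphism $\phi_Y \colon Y \to \PP(H^0(Y, -K_Y)^{\vee}) \cong \PP^3_K$ of degree $2$ whose branch locus $X_Y$ is a smooth sextic. Any $K$-isomorphism $Y_X \cong Y$ then transports the anticanonical branch locus $X$ of $Y_X$ to that of $Y$, yielding a $K$-isomorphism $X \cong X_Y$. Hence the fibre of $X \mapsto Y_X$ over each isomorphism class of $Y$ consists of at most one isomorphism class of $X$, and combined with the previous step this proves Theorem~\ref{theorem3}.

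The main obstacle I anticipate is controlling good reduction through the double-cover construction. The enlargement by the primes above $2$ is genuinely necessary, since in characteristic $2$ a na\"{\i}vely formed double cover fails to be \'etale along the branch locus and may well cease to be smooth; this step likely requires either an explicit Jacobian-type argument in mixed characteristic, or an appeal to a N\'eron--Ogg--Shafarevich-type criterion relating unramifiedness of the $\ell$-adic cohomology of $X$ and that of $Y_X$.
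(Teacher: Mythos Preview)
Your approach is exactly the paper's, but one claim in your first paragraph is false and the paper explicitly says so. The assignment $X \mapsto Y_X$ does \emph{not} depend only on the $K$-isomorphism class of $X$: even for a fixed embedding $X \hookrightarrow \PP^3_K$, the defining equation $f$ is determined only up to a scalar in $K^*$, and replacing $f$ by $cf$ with $c \in K^* \setminus K^{*2}$ produces the quadratic twist $Y_{cf}$, which is in general not $K$-isomorphic to $Y_f$. Your Lefschetz argument addresses only the choice of embedding, not this scalar ambiguity (and is itself inaccurate: $\Pic(X_{\bar K})$ need not be cyclic for a specific smooth sextic surface---what is true and sufficient is that $\OO_X(1)$ is the unique half of $K_X$ in the torsion-free group $\Pic X$). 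The paper records the non-well-definedness explicitly: ``this construction depends on the choice of good model for $X$; different choices may give rise to non-isomorphic $\mathcal{Y}_f$ (e.g.~simply multiply the equation of $f$ by a non-square in the unit group $A^*$).''

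The good news is that your argument survives once you drop this claim. What the proof actually needs, and what your final paragraph establishes, is that the \emph{reverse} assignment $Y \mapsto X_Y$ (anticanonical branch locus) is well-defined on $K$-isomorphism classes, and that every sextic $X$ with good reduction outside $S$ is isomorphic to $X_{Y}$ for some $Y$ with good reduction outside $S'$. Theorem~\ref{theorem2} then gives finitely many such $Y$, hence finitely many branch loci, hence finitely many $X$. So simply delete the well-definedness claim for $X \mapsto Y_X$ and run the argument through the branch-locus map; this is precisely how the paper concludes.
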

 
 Here by good reduction, we mean \emph{good reduction as a sextic surface} (see Section \ref{sec:sextic} for precise definitions). This result proves a special case of the Shafarevich conjecture for smooth complete intersections  posed by the authors in \cite[Conj.~1.4]{JL}.
 
 To the authors' knowledge, Theorem \ref{theorem3} is the first non-trivial result in the literature  on the arithmetic of smooth sextic surfaces. Note that these surfaces do not have K3-type (having middle Hodge numbers $10,86,$ and $10$), hence the usual Kuga-Satake construction \cite{DeligneK3, Moo15} cannot be applied here, as used to prove the Shafarevich conjecture for K3 surfaces \cite{Andre, She14}, say. 
 
There is now a growing body of evidence which suggests that the analogue of the Shafarevich conjecture could always hold for Fano varieties. It turns out however that this is not the case, as our next result shows.

\begin{theorem}\label{thm:CE}
There exists a finite set of finite places $S$ of $\QQ$ such that the set of $\bar{\QQ}$-isomorphism classes of Fano threefolds of Picard number $2$ over $\QQ$ with good reduction outside $S$ is infinite.
\end{theorem}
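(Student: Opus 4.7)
The plan is to produce a smooth projective family $\mathcal{X} \to U$ of Picard rank $2$ Fano threefolds over a Zariski open subscheme $U$ of a projective space over $\ZZ_S$, and then use Zariski density of $U(\ZZ_S)$ together with positive-dimensionality of the coarse moduli to force infinitely many $\bar{\QQ}$-isomorphism classes of fibres.

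Concretely, set $\PP := \PP(H^0(\PP^2_{\ZZ} \times \PP^2_{\ZZ}, \mathcal{O}(2,2))) \cong \PP^{35}_{\ZZ}$ and let $U \subset \PP$ be the open subscheme classifying smooth divisors of bidegree $(2,2)$. For each $D \in U(\kbar)$, adjunction gives $-K_D = (H_1 + H_2)|_D$, which is ample, hence $D$ is a Fano threefold; the Lefschetz hyperplane theorem applied to the smooth ample divisor $D \subset \PP^2 \times \PP^2$ yields $H^2(D, \ZZ) \cong H^2(\PP^2 \times \PP^2, \ZZ) = \ZZ H_1 \oplus \ZZ H_2$, and since $h^{2,0}(D) = 0$ for the Fano threefold $D$ (by Kodaira vanishing and Hodge symmetry), we conclude $\Pic(D_{\kbar}) = \ZZ^2$, so $D$ has geometric Picard number exactly $2$. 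After enlarging $S$ to include all primes over which the universal family $\mathcal{X}|_{U_{\ZZ_S}} \to U_{\ZZ_S}$ fails to be smooth projective, every $t \in U(\ZZ_S) \subset U(\QQ)$ yields a Picard rank $2$ Fano threefold $\mathcal{X}_t$ over $\QQ$ with good reduction outside $S$.

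To conclude, I claim that $\{\mathcal{X}_t : t \in U(\ZZ_S)\}$ represents infinitely many $\bar{\QQ}$-isomorphism classes. Any abstract isomorphism $D \simeq D'$ between two smooth $(2,2)$-divisors must permute the two primitive nef rays of $\Pic(D) = \ZZ H_1|_D \oplus \ZZ H_2|_D$, each of which defines a morphism $D \to \PP^2$ realising $D$ as a conic bundle over $\PP^2$; hence the isomorphism is induced by an element of the $16$-dimensional group $G := (\PGL_3 \times \PGL_3) \rtimes \ZZ/2$ acting on $\PP^2 \times \PP^2$. Consequently each $\bar{\QQ}$-isomorphism class corresponds to a $G$-orbit in $U_{\bar{\QQ}}$, of dimension at most $16$, so any finite union of such orbits is contained in a proper closed subscheme of $U_{\bar{\QQ}}$ (as $\dim U = 35$). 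Since $U \subset \PP^{35}_{\ZZ_S}$ is open, $U(\ZZ_S)$ is Zariski dense in $U_{\bar{\QQ}}$ and cannot be contained in any proper closed subscheme; hence $U(\ZZ_S)$ meets infinitely many $G$-orbits, producing infinitely many $\bar{\QQ}$-isomorphism classes.

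The main technical obstacle is the identification of abstract isomorphisms with $G$-orbits on $U$: the implication that $G$-equivalent divisors yield isomorphic threefolds is immediate, but the reverse requires showing that any $\bar{\QQ}$-isomorphism $D \simeq D'$ is induced by an automorphism of the ambient $\PP^2 \times \PP^2$. This rests on reconstructing the embedding $D \hookrightarrow \PP^2 \times \PP^2$ intrinsically from the two extremal Mori contractions of $D$, namely the two projections $D \to \PP^2$, which are distinguished by the two primitive nef rays in $\Pic(D) = \ZZ^2$ and whose combined morphism $D \to \PP^2 \times \PP^2$ is recovered from $H^0(D, H_i|_D)$ via the restriction sequence on the ambient product.
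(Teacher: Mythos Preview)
Your argument has a genuine gap at the crucial step: you assert that $U(\ZZ[S^{-1}])$ is Zariski dense in $U_{\Qbar}$ because $U$ is open in $\PP^{35}$, but openness in projective space does not imply density of integral points---witness $\PP^1\setminus\{0,1,\infty\}$, whose $\ZZ[S^{-1}]$-points are finite for every $S$. Your preliminary enlargement of $S$ is vacuous, since by the very definition of $U$ the universal family over $U$ is already smooth; the real obstruction is that a $\QQ$-point of $U$ may fail to extend to a $\ZZ[S^{-1}]$-point because its reduction at some prime outside $S$ is singular. Worse, there is reason to expect the density claim to fail for every $S$: a smooth $(2,2)$-divisor is a conic bundle over $\PP^2$ with discriminant a plane sextic, its intermediate Jacobian is the associated $9$-dimensional Prym, and one expects an immersive period map on the $19$-dimensional moduli. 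If so, the very machinery of this paper would give Shafarevich finiteness for $(2,2)$-divisors, and $U(\ZZ[S^{-1}])$ would meet only finitely many $G$-orbits---the opposite of what you need.

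The paper avoids this by choosing a family whose parameter space is \emph{proper}: it fixes one smooth complete intersection of two quadrics $X\subset\PP^5_\QQ$ with good reduction outside $S$ and with infinitely many $\QQ$-rational lines, and forms $\Bl_L X$ for each such line $L$. The scheme of lines on $X$ is a torsor under the abelian surface $J(X)$; properness gives $\QQ$-points $=$ $\ZZ[S^{-1}]$-points, so infinitude comes from positive Mordell--Weil rank with no density argument on an affine parameter space. Lemmas~\ref{lem:E} and~\ref{lem:iso_L} then show that $\Bl_{L_1}X\cong\Bl_{L_2}X$ forces $L_1,L_2$ to be $\Aut(X)$-equivalent, and $\Aut(X)$ is finite.
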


We prove Theorem \ref{thm:CE} by constructing an explicit family of counter-examples (see Theorem \ref{theorem:counter-example}). We briefly
explain this construction now, as it is very simple. Let $X$ be a smooth intersection of two quadrics
in $\PP^5_{\QQ}$ which contains infinitely many lines. 
The blow-up of $X$ at a line is a Fano threefold of Picard number $2$. However, the lines
on $X$ are parametrised by an abelian surface, namely the intermediate Jacobian of $X$. Under our assumptions
the intermediate Jacobian has infinitely many integral points, which implies that we obtain infinitely many  Fano threefolds
with good reduction outside some finite set of primes. We then show that we obtain infinitely many $\bar{\QQ}$-isomorphism classes using the finiteness of $\Aut(X)$.

Note that this method breaks down for del Pezzo surfaces, where a version of the Shafarevich conjecture was shown to hold by Scholl \cite{Scholl}. Here to obtain non-trivial moduli one must blow-up at least $5$ points of $\PP^2$
in \emph{general position}. However, the configuration space of points in general position turns out to be
hyperbolic and to have only finitely many integral points. This phenomenon does not occur for intersections of two quadrics as no general position hypotheses are required:
one is allowed to blow-up \emph{any} line.

Our finiteness results (Theorems \ref{theorem1}, \ref{theorem2} and \ref{theorem3}) may be interpreted as special cases of the \emph{Lang-Vojta conjecture} \cite{Abr, CHM}.
 To explain this relationship in the setting of Faltings's work \cite{Faltings2} (cf.~\cite[Ex.~F.5.3.7]{HindrySilverman}), let $\mathcal A_g$ be the stack of $g$-dimensional principally polarised abelian schemes over $\ZZ$. Since the Siegel upper half-space $\mathbb{H}_g$ is a bounded domain and $\mathcal {A}_{g,\CC}^{\textrm{an}} = [\mathrm{Sp}_{2g}(\ZZ)\backslash \mathbb H_g]$, 
 all holomorphic maps $\mathbb C\to \mathcal A_{g,\CC}^{\textrm{an}}$ are constant, i.e.~$\mathcal A_{g,\CC}$ is Brody hyperbolic. In particular, in  the light of the Lang-Vojta conjecture, one expects that $\mathcal A_{g}$ has only finitely many (isomorphism classes of) integral points, as confirmed by the results of Faltings.
Many naturally occuring moduli stacks are hyperbolic (e.g.~the stack of canonically polarised varieties \cite{VZ}), and ignoring stacky issues, the Lang-Vojta conjecture implies that such hyperbolic moduli stacks should have only finitely many  (isomorphism classes of) integral points. Our finiteness results fit into this general framework (see Remark \ref{remark:lang_vojta}).

Theorem \ref{thm:CE} shows that some components of the stack of Fano varieties have infinitely many integral points. The threefolds we use to prove Theorem \ref{thm:CE} appear as No.~19 of Table 12.3 of \cite{IskPro} in the classification of Fano threefolds, and have index $1$, degree $26$ and third Betti number $4$. 

However our final result (Theorem \ref{thm:non_density1}) shows that, nevertheless, the Diophantine properties of this moduli stack are in accordance with Campana's generalised version of the Lang-Vojta conjecture \cite[Conj.~13.23]{Campana}.  We will content ourselves here with an imprecise version of our result, and refer the reader to Theorem \ref{thm:non_density2} for a more precise statement, and to Remark \ref{remark:campana} for an explanation of the relationship to Campana's conjecture.

\begin{theorem}\label{thm:non_density1}
Let $K$ be a number field and let $S$ be a finite set of finite places of $K$. Then the set of $\OO_K[S^{-1}]$-points in the moduli stack of Fano threefolds of Picard number $2$, index $1$,  degree $26$ and  third Betti number $4$ is not Zariski dense.
\end{theorem}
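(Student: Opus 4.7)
The plan is to exploit the fact that every Fano threefold $X$ of the type considered arises as the blow-up of a smooth intersection of two quadrics $Y \subset \PP^5$ along a line $L \subset Y$, as noted in the excerpt. This produces a natural ``contraction'' morphism of moduli stacks $\pi \colon \mathcal{M} \to \mathcal{N}$, where $\mathcal{N}$ denotes the moduli stack of smooth intersections of two quadrics in $\PP^5$. The fibre of $\pi$ over $[Y]$ is essentially the Fano variety of lines on $Y$, which is an abelian surface (it is isomorphic to the intermediate Jacobian of $Y$ after a choice of base point). Thus $\pi$ has two-dimensional fibres; since $\dim \mathcal{N} = 3$, we get $\dim \mathcal{M} = 5$.

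First I would construct $\pi$ carefully enough that it induces a map on integral points $\mathcal{M}(\OO_K[S^{-1}]) \to \mathcal{N}(\OO_K[S^{-1}])$. Concretely, given a smooth family of Fano threefolds $\mathcal{X} \to \Spec \OO_K[S^{-1}]$, one identifies the exceptional divisor $E \subset \mathcal{X}$ of the extremal contraction as a canonical smooth divisor, and checks that the relative contraction of $E$ produces a smooth family $\mathcal{Y} \to \Spec \OO_K[S^{-1}]$ whose geometric fibres are smooth intersections of two quadrics in $\PP^5$. In other words, the blow-down preserves good reduction.

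Next I would invoke the Shafarevich-type finiteness theorem for smooth complete intersections of Hodge level at most $1$ proved by the authors in \cite{JL}, which in particular implies that $\mathcal{N}(\OO_K[S^{-1}])$ is finite. Consequently, $\mathcal{M}(\OO_K[S^{-1}])$ is contained in the preimage under $\pi$ of this finite set, which is a finite union of fibres of $\pi$ and hence a closed substack of $\mathcal{M}$ of dimension at most $2$. Since $\dim \mathcal{M} = 5$, this closed substack is proper, and so the integral points are not Zariski dense.

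The main obstacle will be the first step: rigorously constructing $\pi$ as a morphism of moduli stacks (rather than merely on geometric points) and verifying that contraction of the exceptional divisor behaves well in families over $\OO_K[S^{-1}]$, preserving good reduction. Once $\pi$ is in hand, the rest is a short dimension count combined with \cite{JL}. It is worth emphasising the contrast with Theorem \ref{thm:CE}: that counterexample reflects the fact that a single fibre of $\pi$, being an abelian surface, may well contain a Zariski dense set of integral points (indeed infinitely many by Mordell--Weil with positive rank), but the total moduli space $\mathcal{M}$ has more directions than any single fibre, and finiteness on the base forces non-density globally.
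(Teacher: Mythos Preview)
Your proposal is correct and follows essentially the same strategy as the paper: relate $\mathcal{M}$ to the moduli $\mathcal{C}$ of smooth intersections of two quadrics in $\PP^5$, invoke the Shafarevich-type finiteness for $\mathcal{C}$, and conclude that integral points of $\mathcal{M}$ lie in finitely many two-dimensional fibres. The paper resolves your stated ``main obstacle'' by constructing the morphism in the opposite direction---it introduces the stack $\mathcal{L}$ of pairs $(Y,L)$, shows that blow-up gives an isomorphism $\mathcal{L}_{\ZZ[1/2]} \xrightarrow{\sim} \mathcal{M}_{\ZZ[1/2]}$ (using that the exceptional divisor is characterised by $E^3=0$), and then uses the tautological forgetful map $\mathcal{L} \to \mathcal{C}$, thereby sidestepping any need to run the extremal contraction in families.
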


We now give an outline of the ingredients of the proofs of Theorems \ref{theorem1} and \ref{theorem2}.
For certain $r$ and $d$, it turns out that all Fano threefolds with Picard number $1$, index $r$ and degree $d$ satisfy the infinitesimal Torelli property. Moreover, for certain $r$ and $d$, it is known that such Fano threefolds are non-rational. In these cases, a classical result of Matsusaka and Mumford \cite{MatMum} can be used to show that the stack parametrizing such Fano threefolds is a smooth finite type separated Deligne-Mumford stack over $\CC$. By separatedness, the main difficulty becomes showing that the set of ``periods'' (i.e.~integral polarised Hodge structures) associated to all Fano threefolds over $K$ with good reduction outside $S$ is finite. To do so, we use the theory of the \emph{intermediate Jacobian}. This is usually constructed via transcendental methods as a complex torus, however, for Fano threefolds, a recent result of Achter--Casalaina-Martin--Vial \cite{ACM} shows that it descends to subfields of $\CC$. This theory, once properly performed and combined with Faltings's finiteness results for abelian varieties \cite{Faltings2}, yields the proof of Theorems \ref{theorem1} and Theorem \ref{theorem2} in most cases. This method is based on our proof of the Shafarevich conjecture for smooth complete intersections of Hodge level $1$ \cite{JL}. The main remaining difficult case is that of index $2$ and degree $5$. Here the stack of Fano threefolds is neither separated nor Deligne-Mumford. Nevertheless, such Fano threefolds are infinitesimally rigid (Corollary \ref{cor:vanishing_of_H1}). This property allows us to prove the result by appealing to finiteness results on cohomology sets due to Gille--Moret-Bailly \cite{GilleMoretBailly}. This is similar to the case of flag varieties treated in \cite{JL2}. 

With current tools, these methods do not yield new results for other Fano threefolds of Picard number $1$ and index $1$. The issue is that either one does not know an infinitesimal Torelli theorem, or the moduli is non-separated but non-discrete. For example, 
Fano threefolds of Picard number $1$, index $1$ and genus $12$ can have positive-dimensional automorphism groups \cite[Thm.~5.2.13]{IskPro}, so their moduli is non-separated. Moreover, their moduli is $6$-dimensional \cite[Thm.~5.2.11]{IskPro} and 
they have trivial intermediate Jacobian, so the infinitesimal Torelli theorem clearly does not hold here. We therefore finish with the following question.
\begin{question}\label{question}
	 Let $K$ be a number field and  let $S$ be a finite set of  finite places of $K$. Is the set of $K$-isomorphism classes of Fano threefolds over $K$ of Picard number $1$ with good reduction outside of $S$ finite?
\end{question}

\subsection*{Outline of the paper} In Section \ref{section:fanos} we study the general properties of Fano varieties in families, in particular their invariants and moduli. In Section \ref{section:Fanothreefolds} we specialise to the case of Fano threefolds of Picard number $1$, where we study finer properties such as separatedness of the moduli stacks and Torelli-type problems. In Section \ref{section:finiteness} we use these geometric results to deduce our main theorems (Theorems \ref{theorem1}, \ref{theorem2} and \ref{theorem3}). Finally in Section \ref{section:CE} we construct the counter-examples necessary for Theorem \ref{thm:CE}, and prove Theorem \ref{thm:non_density1}.

\begin{ack} We thank Olivier Benoist for answering our many questions on the separatedness of the stack of Fano varieties.  We are grateful to Brian Conrad, Jack Hall, and David Rydh for  comments and corrections on the structure of the stack of Fano varieties.  We thank Jean-Beno\^it Bost, Francois Charles,  Bas Edixhoven,   Frank Gounelas, David Holmes, Robin de Jong, Remke Kloosterman,  Sasha Kuznetzov, Manfred Lehn, Stefan M\"uller-Stach, Cl{\'e}lia Pech, Jason Starr,  Duco van Straten, Ronan Terpereau, Chenyang Xu and  Kang Zuo for interesting and useful discussions. We also thank the referee for some useful comments. The first named author gratefully acknowledges support of SFB/Transregio 45.
\end{ack}

\begin{con}

For a field $k$, we let $k\to \bar k$ be an algebraic closure.

A \textit{Dedekind scheme} is an integral noetherian normal one-dimensional scheme. 

For a number field $K$, we denote by $\OO_K$ its ring of integers.
If $S$ is a finite set of finite places of $K$, we let $\OO_K[S^{-1}]$ denote the localization of $\OO_K$ at $S$.

An \textit{arithmetic scheme} is an integral regular finite type  flat scheme over $\ZZ$.  Note that if $B$ is a one-dimensional arithmetic scheme, then there exist a number field $K$ and a finite set of finite places $S$ of $K$ such that $B \cong \Spec \OO_{K}[S^{-1}]$. 

If $B$ is a  scheme and $N\neq 0$ an integer, we write $B_{\ZZ[1/N]}$ for $B \times_{\Spec \ZZ} \Spec \ZZ[1/N]$.
 For $b\in B$, we let $\kappa(b)$ denote the residue field of $b$. We say that an integer $N$ is invertible on $B$ if, for all $b$ in $B$, the integer $N$ is invertible in $\kappa(b)$. 
 
A variety over a field $k$ is a   finite type  $k$-scheme. If $X$ is a variety over $k$, we let $\Theta_{X/k}$ (or simply $\Theta_X$) denote the tangent sheaf of $X$ over $k$.

For a noetherian scheme $X$,
we denote by $X^{(1)}$ its set of codimension $1$ points.

For a tuple $(a_0, \ldots, a_n)$ of integers and a scheme $B$, we denote by $\PP_B(a_0,\ldots,a_n)$ the corresponding weighted projective space over $B$.

\end{con}

\section{Families of Fano varieties}\label{section:fanos}   In this section we study the geometry of Fano varieties and their moduli.

\begin{definition}\label{def:fano_scheme}
Let $k$ be a field. A \emph{Fano variety over $k$} is a smooth proper geometrically integral variety over $k$ with ample anticanonical bundle.

Let $B$ be a scheme. A \emph{Fano scheme over $B$} is a smooth proper morphism $X\to B$ of schemes whose fibres are Fano varieties.  A Fano $n$-fold over $B$ is a Fano scheme of relative dimension $n$.
\end{definition}

\subsection{The stack of Fano varieties}\label{section:stacks}
Concerning   algebraic stacks, we will use the conventions of the \emph{Stacks project}; see \cite[Tag 026N]{stacks-project}.

We  first discuss the moduli of polarised varieties. As is written in \cite[\S3]{dJHS}, the correct approach to use when doing moduli of algebraic varieties is to use families of varieties where the total space is an algebraic space.  However, when studying families of polarised varieties, the use of algebraic spaces can be avoided. Indeed, if $B$ is a scheme and  $X\to B$ is a smooth proper morphism of algebraic spaces which admits a relatively ample line bundle, then $X$ is a scheme.

Let $\textsc{Pol}$ be the fibred category (in groupoids over $(\mathrm{Sch})_{{fppf}}$)  with objects triples $(U,X\to U, L)$, where $U$ is a scheme, $X\to U$ is a flat proper locally finitely presented morphism of schemes and $L$ is a relatively ample line bundle  on $X$. A morphism from $(U,X\to U,L)$ to $(V,Y\to V, M)$ in $\textsc{Pol}$ is given by a triple  $(f,g,h)$, where $f:U\to V$ is a morphism of schemes, $g:X\to Y\times_V U$ is an isomorphism of $U$-schemes and $h$ is an isomorphism from the invertible sheaf $L$ on $X$ to the pull-back  of $M$ (cf.~\cite[\S 4]{Starr06}). 

 \begin{lemma}\label{lem:pol}
 The fibred category $\textsc{Pol}$ is an algebraic stack, locally of finite type over $\ZZ$, whose diagonal is affine and of finite type.
 \end{lemma}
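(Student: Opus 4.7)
The plan is to exhibit $\textsc{Pol}$ as an open substack of the relative Picard stack of the algebraic stack $\mathcal{M}$ parametrising proper flat finitely presented morphisms $X\to U$ with $X$ an algebraic space, and then to describe the diagonal explicitly as a closed subscheme of an affine $\mathrm{GL}$-bundle.

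For algebraicity and local finite type: Artin's theorem, as formalised in the Stacks project, implies that $\mathcal{M}$ is an algebraic stack, locally of finite presentation over $\ZZ$. Over it, the relative Picard stack $\underline{\Pic}_{\mathcal{M}/\ZZ}$, which classifies a point of $\mathcal{M}$ together with a line bundle on the universal family, is again algebraic and locally of finite presentation over $\mathcal{M}$. Relative ampleness is an open condition on the base, so the ample locus $\underline{\Pic}_{\mathcal{M}/\ZZ}^{\mathrm{amp}}$ is an open substack. The observation quoted from \cite[\S 3]{dJHS} immediately before the lemma --- a proper flat finitely presented algebraic space admitting a relatively ample line bundle is automatically a scheme --- identifies $\textsc{Pol}$ with $\underline{\Pic}_{\mathcal{M}/\ZZ}^{\mathrm{amp}}$ as fibred categories, so $\textsc{Pol}$ inherits algebraicity and the property of being locally of finite type over $\ZZ$.

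For the diagonal: given two pairs $(X,L)$ and $(X',L')$ over a base $U$, I would realise $\Isom_U((X,L),(X',L'))$ as a closed subscheme of an affine finite-type $U$-scheme. Zariski-locally on $U$, choose $n\gg 0$ so that $L^{\otimes n}$ and $L'^{\otimes n}$ are relatively very ample with vanishing higher direct images, and form the finite locally free $\OO_U$-modules $V=\pi_* L^{\otimes n}$ and $V'=\pi'_* L'^{\otimes n}$, which have the same rank. A polarised isomorphism $(f,\phi)$ pushes forward via $\phi^{\otimes n}$ to an $\OO_U$-module isomorphism $V\to V'$, and the induced map of relative projective bundles $\PP(V)\to \PP(V')$ carries the closed subscheme $X\subset \PP(V)$ onto $X'\subset \PP(V')$. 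Conversely, an $\OO_U$-module isomorphism $V\to V'$ with this property uniquely determines such an $(f,\phi)$. The condition ``the induced map of projective bundles sends $X$ to $X'$'' is closed (by separatedness of the relative Hilbert scheme), so this identifies $\Isom_U((X,L),(X',L'))$ with a closed subscheme of the affine finite-type $U$-scheme $\Isom_{\OO_U}(V,V')$, giving the required affine, finite type diagonal.

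The main obstacle is not mathematical but book-keeping: one must glue the local choices of $n$ across $U$ (which is done by working on quasi-compact opens and using that the relevant Hilbert polynomials are locally constant) and verify that the closed condition ``carries $X$ to $X'$'' can be expressed globally on $U$. Once this is arranged, the genuine mathematical content sits in the two cited algebraicity results (for $\mathcal{M}$ and for its relative Picard stack), while the remainder is standard manipulation with very ample embeddings.
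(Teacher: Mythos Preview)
Your algebraicity argument is a valid alternative to the paper's approach. The paper simply cites the literature: one either uses Hilbert schemes to produce a smooth presentation of $\textsc{Pol}$ directly, or verifies Artin's axioms for $\textsc{Pol}$ itself (as in \cite[Prop.~4.2]{Starr06} and \cite[Thm.~8.1]{Hall1}). You instead factor through the stack $\mathcal{M}$ of proper flat algebraic spaces and its relative Picard stack; this works, but note that you are leaning on two black boxes (algebraicity of $\mathcal{M}$ and of $\underline{\Pic}$) of comparable depth to the result you want.

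The diagonal argument, however, has a genuine gap. Your map
\[
\Isom_U\big((X,L),(X',L')\big)\;\longrightarrow\;\Isom_{\OO_U}(V,V'),\qquad (f,\phi)\longmapsto \pi_*(\phi^{\otimes n}),
\]
is \emph{not} an isomorphism onto the locus of $A$ carrying $X$ to $X'$. In one direction it fails to be injective: if $\zeta\in\mu_n(\Gamma(U,\OO_U))$ then $(f,\zeta\phi)$ and $(f,\phi)$ have the same image, so over any base with nontrivial $n$-th roots of unity the map is at best $n$-to-$1$. In the other direction it fails to be surjective: an $A$ preserving $X$ yields an isomorphism $f\colon X\to X'$ together with $\psi\colon L^{\otimes n}\xrightarrow{\sim} f^*L'^{\otimes n}$, but $f^*L'\otimes L^{-1}$ is then only $n$-torsion in $\Pic_{X/U}$, not necessarily trivial, so $\psi$ need not admit an $n$-th root $\phi$. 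Consequently your identification of $\Isom_U((X,L),(X',L'))$ with a closed subscheme of $\Isom_{\OO_U}(V,V')$ is false as stated.

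The fix is to remember the full grading rather than a single power. Working with the sheaf of graded $\OO_U$-algebras $R=\bigoplus_{k\ge 0}\pi_*L^{\otimes k}$ (and $R'$ similarly), one has $X=\Proj_U R$ with $L=\OO(1)$, so polarised isomorphisms correspond bijectively to graded $\OO_U$-algebra isomorphisms $R'\to R$. Locally on $U$ these are determined by finitely many graded pieces, and the algebra constraints cut out a closed subscheme of a finite product of affine $\Hom$-schemes. This is essentially the argument carried out in \cite[\S 2.1]{dJS10}, which is what the paper cites for this step. Alternatively, using both $L^{\otimes n}$ and $L^{\otimes(n+1)}$ simultaneously already suffices to kill the $\mu_n$-ambiguity.
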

 
 \begin{proof}
As is mentioned in \cite[Rem.~4.3]{Starr06}, one can use the theory of Hilbert schemes to give a smooth presentation of $\textsc{Pol}$.  Alternatively, one can verify Artin's axioms for a stack to be limit preserving and algebraic (see \cite{Artin}, \cite[Tag 07XJ]{stacks-project} and \cite[Tag 07Y3]{stacks-project}). This is well explained in the proof of  \cite[Prop.~4.2]{Starr06} (see also \cite[Thm.~8.1]{Hall1}).   Finally, the diagonal is shown to be affine and of finite type in 
\cite[\S 2.1]{dJS10}. 
\end{proof}

Let $\textsc{Fano}$ be the fibred category  with  objects Fano schemes  $f:X\to B$ (Definition \ref{def:fano_scheme}), and morphisms given by Cartesian diagrams. The fibred category $\textsc{Fano}$ is a stack (in groupoids over $(\mathrm{Sch})_{{fppf}}$); it is the \emph{stack of Fano varieties}.

There is a  natural morphism of stacks $\textsc{Fano}\to \textsc{Pol}$ which associates to a Fano scheme $X\to S$ the triple $\smash{(S,X\to S, \omega_{X/S}^{-1})}$. We emphasise that this morphism is not fully faithful. Indeed, if $X\to S$ is a Fano scheme, then  the polarised variety $\smash{(S,X\to S, \omega_{X/S}^{-1})}$ has more automorphisms than the Fano scheme $X\to S$ (given  by the identity on $X\to S$ and scalar multiplication of the line bundle $\omega_{X/S}^{-1}$). 

We will say that a morphism of algebraic stacks $X\to Y$ is quasi-affine if it is representable by schemes and, for all schemes $S\to Y$, the morphism $X\times_Y S\to S$ is a quasi-affine morphism of schemes.

\begin{lemma}\label{lem:fano_to_pol}
The natural morphism $\textsc{Fano}\to \textsc{Pol}$ is  quasi-affine.
\end{lemma}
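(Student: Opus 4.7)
The plan is to unwind the fibre product $\textsc{Fano}\times_{\textsc{Pol}} S$ for a scheme $S\to\textsc{Pol}$ coming from a polarised family $(X\to S, L)$, and show directly that it is a quasi-affine $S$-scheme. Using the definition of a morphism in $\textsc{Pol}$, an object of the fibre product over $T\to S$ can be identified (via the $T$-scheme isomorphism $g\colon Y\xrightarrow{\sim} X_T$ that is part of the data) with the data of a morphism $T\to S$ such that $X_T\to T$ is a Fano scheme, together with an isomorphism of line bundles $h\colon L_T\xrightarrow{\sim}\omega_{X_T/T}^{-1}$ on $X_T$.

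First I would show that the locus $S^\circ\subseteq S$ over which $X\to S$ restricts to a Fano scheme is open: openness of the smooth locus is standard; openness of the locus with geometrically integral fibres holds for a proper flat finitely presented morphism; and openness of the locus where $\omega_{X/S}^{-1}$ is relatively ample is also standard. Since open immersions are quasi-affine and quasi-affinity is preserved under composition, it suffices to prove the statement after replacing $S$ by $S^\circ$, so I may assume $X\to S$ itself is a Fano scheme; in particular $\omega_{X/S}^{-1}$ is a line bundle whose formation commutes with base change.

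Next, set $\mathcal{N}:=L\otimes\omega_{X/S}$, a line bundle on $X$. Then the fibre product over $T\to S$ is identified with the set of nowhere-vanishing sections of $\mathcal{N}_T$, i.e.\ with an open subfunctor of the sections functor $T\mapsto H^0(X_T,\mathcal{N}_T)$. By Grothendieck's theory of coherent cohomology in families, applied to the proper flat morphism $X\to S$ and the $S$-flat coherent sheaf $\mathcal{N}$, there is a coherent sheaf $\mathcal{H}$ on $S$ such that the sections functor is represented by the affine $S$-scheme $\mathbb{V}(\mathcal{H}):=\underline{\Spec}_S(\operatorname{Sym}\mathcal{H})$. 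The subfunctor of nowhere-vanishing sections is open in $\mathbb{V}(\mathcal{H})$: indeed, if $\sigma$ is the tautological section of $\mathcal{N}$ on $X\times_S\mathbb{V}(\mathcal{H})$ and $Z$ its zero scheme, then the image of $Z$ under the proper projection $X\times_S\mathbb{V}(\mathcal{H})\to\mathbb{V}(\mathcal{H})$ is closed, and the nowhere-vanishing locus is its complement.

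Combining these steps, $\textsc{Fano}\times_{\textsc{Pol}} S$ is identified with an open subscheme of the affine $S$-scheme $\mathbb{V}(\mathcal{H})$, hence is quasi-affine over $S$, which gives the claim. The step I expect to require the most care is the representability of the sections functor by a linear (affine) $S$-scheme, where the flatness and properness hypotheses on $X\to S$ are used in full; openness of the Fano locus and the identification of the fibre product with an Isom-type functor are essentially bookkeeping.
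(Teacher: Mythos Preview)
Your proof is correct and follows essentially the same strategy as the paper: both first pass to the open locus of $S$ over which $X\to S$ is a Fano scheme (the paper packages this as the open substack $\textsc{Pol-Fano}\subset\textsc{Pol}$), and both then identify the remaining fibre product with the functor of isomorphisms between the two line bundles $L$ and $\omega_{X/S}^{-1}$.

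The only technical difference is in how this $\mathrm{Isom}$ functor is handled. You view $\mathrm{Isom}$ as the open nowhere-vanishing locus inside the affine scheme representing sections of $\mathcal{N}=L\otimes\omega_{X/S}$, concluding that it is quasi-affine. The paper instead exhibits $\mathrm{Isom}_{\mathcal{O}_X/S}(L,L')$ as a closed subscheme of $\mathrm{Hom}(L,L')\times_S\mathrm{Hom}(L',L)$ via the Cartesian square sending $(\phi,\psi)$ to $(\psi\phi,\phi\psi)$ and pulling back the identity section; since each $\mathrm{Hom}$ is affine (Hall's Theorem~D, which is the same input you use for representability of the sections functor), this shows $\mathrm{Isom}$ is affine, not merely quasi-affine. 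The paper's route thus yields the slightly sharper intermediate statement that $\textsc{Fano}\to\textsc{Pol-Fano}$ is affine, though for the lemma as stated your argument suffices.
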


\begin{proof} 
	Let $\textsc{Pol-Fano}$ be the  substack of $\textsc{Pol}$ whose objects are triples $(S, f:X\to S,   L)$ in $\textsc{Pol}$ such that $f$ is smooth and $\omega_{X/S}^{-1}$ is relatively ample. As smoothness is an open condition and relative ampleness of a line bundle is an open condition, it follows  that $\textsc{Pol-Fano}$ is an open substack of $\textsc{Pol}$. 
	
	Note that  $\textsc{Fano}\to \textsc{Pol}$ factors via a morphism $\textsc{Fano}\to \textsc{Pol-Fano}$. Therefore, as the composition of an affine morphism with an open immersion is quasi-affine, to prove the lemma, it suffices to show that $\textsc{Fano}\to \textsc{Pol-Fano}$ is an affine morphism.
	
	
	Let $S$ be a scheme and let $S\to \textsc{Pol-Fano}$ be a morphism. Let $(S, f:X\to S, L')$ be the corresponding object of $\textsc{Pol-Fano}$. Write $L:= \omega_{X/S}^{-1}$.  For all $S$-schemes $T$, the $T$-points of  the scheme $\textsc{Fano}\times_{\textsc{Fano-Pol}} S$ are canonically the $T$-points of the $S$-scheme $\mathrm{Isom}_{\mathcal O_X/S}(L,L')$. Here we let $\mathrm{Hom}_{\mathcal O_X/S}(L,L')$ be as in \cite[Thm.~D]{Hall2}, and denote by $\mathrm{Isom}_{\mathcal O_X/S}(L,L')$ the subfunctor of  $\mathrm{Hom}_{\mathcal O_X/S}(L,L')$   parametrising isomorphisms from $L$ to $L'$. To conclude    the proof,  it suffices to show that $\mathrm{Isom}_{\mathcal O_X/S}(L,L')$ is affine. 
	
To do so, we note that the following diagram 
\begin{equation*} 
\begin{split}
\xymatrix{ \mathrm{Isom}_{\mathcal O_X/S}(L,L')\ar[r] \ar[d]  & S \ar[d] \\ \ar[r] \mathrm{Hom}_{\mathcal O_X/S}(L,L') \times_S \mathrm{Hom}_{\mathcal O_X/S}(L',L) & \mathrm{Hom}_{\mathcal O_X/S}(L,L)\times_S \mathrm{Hom}_{\mathcal O_X/S}(L',L')}
\end{split}
\end{equation*} is Cartesian. Here the bottom horizontal arrow is given by ``composition'', the left vertical arrow sends an isomorphism $\phi$ to $(\phi, \phi^{-1})$, and the right vertical arrow is the identity section.
Since the identity section 
is a closed immersion, we conclude that $$\mathrm{Isom}_{\mathcal O_X/S}(L,L') \to \mathrm{Hom}_{\mathcal O_X/S}(L,L') \times_S \mathrm{Hom}_{\mathcal O_X/S}(L',L) $$ is a closed immersion. As      $\mathrm{Hom}_{\mathcal O_X/S}(L,L^\prime)$ and $\mathrm{Hom}_{\mathcal O_X/S}(L',L)$ are affine ~\cite[Thm.~D]{Hall2}, it follows  that  $ \mathrm{Isom}_{\mathcal O_X/S}(L,L')$ is affine, as required.
\end{proof}  

\begin{lemma}\label{lem:algebraic}
The stack  $\textsc{Fano}$ is an algebraic stack, locally of finite type over $\ZZ$, whose diagonal is  affine and of finite type.
\end{lemma}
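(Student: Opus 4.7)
The plan is to deduce this lemma formally from Lemmas \ref{lem:pol} and \ref{lem:fano_to_pol}. First, since $\textsc{Pol}$ is an algebraic stack by Lemma \ref{lem:pol} and the morphism $\textsc{Fano}\to\textsc{Pol}$ is quasi-affine, hence representable by schemes, by Lemma \ref{lem:fano_to_pol}, the standard fact that a representable morphism to an algebraic stack has algebraic source immediately yields that $\textsc{Fano}$ is an algebraic stack.

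To promote this to ``locally of finite type over $\ZZ$'', I would first recall from Lemma \ref{lem:pol} that $\textsc{Pol}$ is locally of finite type over $\ZZ$, and then verify that $\textsc{Fano}\to\textsc{Pol}$ is itself locally of finite type. This can be read off from the proof of Lemma \ref{lem:fano_to_pol}: the morphism factors through the open substack $\textsc{Pol-Fano}\subset\textsc{Pol}$, and the fibre of $\textsc{Fano}\to\textsc{Pol-Fano}$ over an object $(S,f\colon X\to S,L')$ is the $S$-scheme $\mathrm{Isom}_{\mathcal O_X/S}(L,L')$, where $L=\omega_{X/S}^{-1}$. For Fano schemes this is a $\Gm$-torsor (or empty), and in particular is of finite type over $S$, so $\textsc{Fano}\to\textsc{Pol}$ is affine and of finite type.

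For the affine finite-type diagonal, I would factor $\Delta_{\textsc{Fano}}\colon\textsc{Fano}\to\textsc{Fano}\times\textsc{Fano}$ as the composition
$$\textsc{Fano}\xrightarrow{\ \Delta_{\textsc{Fano}/\textsc{Pol}}\ }\textsc{Fano}\times_{\textsc{Pol}}\textsc{Fano}\longrightarrow\textsc{Fano}\times\textsc{Fano}.$$
The second morphism is the base change of $\Delta_{\textsc{Pol}}$ along $\textsc{Fano}\times\textsc{Fano}\to\textsc{Pol}\times\textsc{Pol}$; being affine and of finite type is preserved by base change, and $\Delta_{\textsc{Pol}}$ enjoys these properties by Lemma \ref{lem:pol}. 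The first morphism is the relative diagonal of a quasi-affine morphism, which is separated and representable, so this relative diagonal is a closed immersion. The composition of two affine finite-type morphisms being affine and of finite type, $\Delta_{\textsc{Fano}}$ has the claimed properties.

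Overall the argument is a formal assembly of previously established facts; there is no genuine obstacle. The only subtle point is to verify that $\textsc{Fano}\to\textsc{Pol}$ is of finite type and not merely quasi-affine, which is handled by the explicit $\Gm$-torsor description of its fibres above.
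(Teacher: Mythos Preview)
Your proof is correct and follows essentially the same route as the paper: use the quasi-affineness of $\textsc{Fano}\to\textsc{Pol}$ (Lemma~\ref{lem:fano_to_pol}) to get a closed-immersion relative diagonal, then combine with the properties of $\textsc{Pol}$ from Lemma~\ref{lem:pol}. Your explicit verification that $\textsc{Fano}\to\textsc{Pol}$ is of finite type (via the $\Gm$-torsor description of the fibres) is a welcome addition, since quasi-affineness alone does not guarantee this and the paper leaves the point implicit.
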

\begin{proof}
By Lemma \ref{lem:fano_to_pol}, the  natural morphism of stacks $\textsc{Fano}\to \textsc{Pol}$  is quasi-affine. As the diagonal of a quasi-affine morphism of schemes is a closed immersion, the result follows from Lemma \ref{lem:pol}.
 \end{proof}
 
\begin{lemma}\label{lem:smoothness}
The algebraic stack $\textsc{Fano}_\QQ $ is smooth over $\QQ$.
\end{lemma}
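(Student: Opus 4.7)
The plan is to verify the infinitesimal lifting criterion for formal smoothness. Since Lemma \ref{lem:algebraic} has already shown that $\textsc{Fano}_\QQ$ is an algebraic stack locally of finite type over $\QQ$, smoothness over $\QQ$ is equivalent to formal smoothness, which by standard reductions amounts to the following: given a surjection $A' \twoheadrightarrow A$ of local Artinian $\QQ$-algebras with common residue field $\kappa$ and square-zero kernel $I$, every Fano scheme $X \to \Spec A$ lifts to a Fano scheme $X' \to \Spec A'$. First I would fix such an extension and such an $X$, and attempt to produce $X'$.

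The heart of the argument is a deformation-theoretic calculation. By the standard obstruction theory for smooth proper schemes, the obstruction to producing a flat lift $X'$ of $X$ over $A'$ lies in $H^2(X_0, \Theta_{X_0/\kappa}) \otimes_\kappa I$, where $X_0 := X \times_A \kappa$ is the closed fibre. To show this obstruction vanishes, I would invoke Kodaira--Nakano vanishing, which is available since we are in characteristic zero. Setting $n = \dim X_0$ and using the isomorphism $\Theta_{X_0} \cong \Omega^{n-1}_{X_0} \otimes \omega_{X_0}^{-1}$ together with the ampleness of the anticanonical bundle, Kodaira--Nakano yields
\[
H^2(X_0, \Theta_{X_0}) \;\cong\; H^2\bigl(X_0, \Omega^{n-1}_{X_0} \otimes \omega_{X_0}^{-1}\bigr) \;=\; 0,
\]
as $(n-1) + 2 > n$. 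This is the one genuinely geometric ingredient, and the only step that would fail in positive characteristic; it is also the reason the statement is confined to $\QQ$.

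It remains to check that the flat lift $X'/A'$ produced this way is actually a Fano scheme. Since $A' \twoheadrightarrow A$ is an infinitesimal thickening with the same residue field, the closed fibre of $X'/A'$ coincides with $X_0$. Smoothness of $X'/A'$ then follows from flatness together with smoothness of the closed fibre; properness is preserved under infinitesimal thickenings of the base; ampleness of $\omega_{X'/A'}^{-1}$ follows from ampleness of its restriction to $X_0$, using that on a proper flat family over an Artin-local base ampleness is detected on the closed fibre; and geometric integrality of the unique fibre is inherited from $X_0$. All of these verifications are routine, so I expect the entire proof to reduce to the cohomological vanishing above, which is immediate from Kodaira--Nakano.
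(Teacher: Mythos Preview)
Your proof is correct and follows essentially the same outline as the paper's: both reduce smoothness to formal smoothness via Lemma~\ref{lem:algebraic}, and both establish formal smoothness by showing that Fano varieties in characteristic zero are unobstructed. The paper simply cites references (Ran, Sano) for this unobstructedness, whereas you supply the explicit argument via Kodaira--Nakano vanishing, namely $H^2(X_0,\Theta_{X_0})\cong H^2(X_0,\Omega^{n-1}_{X_0}\otimes\omega_{X_0}^{-1})=0$. Your route is in fact slightly more elementary: the cited references prove unobstructedness via the $T^1$-lifting criterion, a technique designed for situations where the obstruction space need not vanish, but for Fano varieties the obstruction space itself already vanishes by Kodaira--Nakano, so the more delicate machinery is unnecessary. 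Your closing verification that the flat lift remains a Fano scheme is routine and correctly stated.
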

\begin{proof} 
Fano varieties are unobstructed in characteristic zero; see \cite{Ran} or \cite{Sano14}. In particular, the stack $\textsc{Fano}_{\QQ}$ is formally smooth over $\QQ$. As $\textsc{Fano}_\QQ$ is  locally of  finite type over $\QQ$ (Lemma \ref{lem:algebraic}), the lemma follows.
\end{proof}
 
 \begin{corollary}\label{cor:autom_groups_are_affine} Let $B$ be a scheme. Let  $X\to B$ and $Y\to B$ be Fano schemes. Then the scheme $\Isom_B(X,Y)$ is affine and of finite type over $B$.
\end{corollary}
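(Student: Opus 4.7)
The plan is to realise $\Isom_B(X,Y)$ as a base change of the diagonal of $\textsc{Fano}$ and then invoke Lemma \ref{lem:algebraic}. Concretely, the pair $(X,Y)$ defines a morphism $(X,Y)\colon B \to \textsc{Fano}\times_{\ZZ}\textsc{Fano}$, and I would form the $2$-fibre product
\begin{equation*}
\Isom_B(X,Y) \;=\; B \times_{\textsc{Fano}\times \textsc{Fano}} \textsc{Fano},
\end{equation*}
where $\textsc{Fano}\to \textsc{Fano}\times \textsc{Fano}$ is the diagonal. First I would verify the functorial identification: for a $B$-scheme $T\to B$, a $T$-point of the right-hand side is, by definition of the $2$-fibre product, an object $Z\to T$ of $\textsc{Fano}$ together with isomorphisms $Z\simeq X_T$ and $Z\simeq Y_T$ of Fano schemes over $T$. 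Composing gives an isomorphism $X_T\simeq Y_T$, and this assignment is an equivalence onto $\Isom_B(X,Y)(T)$, so the two $B$-functors agree.

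Next, by Lemma \ref{lem:algebraic}, the diagonal $\textsc{Fano}\to \textsc{Fano}\times \textsc{Fano}$ is affine and of finite type. Since being affine and being of finite type are each stable under arbitrary base change of algebraic stacks, the induced morphism $\Isom_B(X,Y)\to B$ is affine and of finite type. In particular, $\Isom_B(X,Y)$ is representable by a scheme (since an affine morphism to a scheme is representable by a scheme) and is therefore an affine, finite type $B$-scheme, as claimed.

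The argument is essentially formal once the Isom functor is recognised as a pullback of the diagonal, so the only potential subtlety is the bookkeeping in the first step, namely making sure the $2$-categorical fibre product is correctly identified with the $\Isom$-functor of Fano schemes (and not, for instance, the $\Isom$-functor of polarised varieties, whose automorphism data would include scalings of $\omega^{-1}_{X/B}$). This is why I would write the identification out functorially rather than cite it; after that, all the real work has been done in Lemmas \ref{lem:pol}--\ref{lem:algebraic}.
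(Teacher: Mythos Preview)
Your argument is correct and follows exactly the same route as the paper: $\Isom_B(X,Y)$ is the pullback of the diagonal of $\textsc{Fano}$, which is affine and of finite type by Lemma~\ref{lem:algebraic}. The paper's proof is simply the one-line citation of that lemma, leaving implicit the standard identification you have written out.
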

 \begin{proof}
The diagonal of $\textsc{Fano}$ is affine and of finite type (Lemma \ref{lem:algebraic}).
\end{proof}

\subsection{Invariants of Fano varieties in families}

The aim of this section is to show that the invariants of Fano varieties are constant in a family, under suitable assumptions.

Recall that, for $X$  a Fano variety over a field $k$, its Picard group  $\Pic(X)$   is finitely generated \cite[Thm.~1.1]{SB97}.

\begin{definition}[Invariants]  Let $X$ be a Fano $n$-fold over an  algebraically closed field~$k$.  
\begin{itemize}
	\item The \emph{Picard number} of $X$ is defined to be   $\rho(X) = \rank_{\ZZ} \Pic X$.
	\item The \emph{index} of $X$  is defined to be $$\Index(X) = \max\{m \in \NN: -K_X/m \in \Pic X\}.$$
	
	\item A \emph{fundamental divisor} for $X$ is a divisor $H_X$ on $X$ for which $\Index(X)H_X = -K_X$.
	\item The \emph{degree} of $X$ is defined to be  $\Degree(X) = (-K_X)^n/\Index(X)^n\,=H_X^n.$	
\end{itemize}
If $X$ is a Fano variety over an  {arbitrary field}  $k$, we define $\rho(X)$ (resp.~$r(X)$, $d(X)$) to be $\rho(X_{\bar k})$ (resp.~$r(X_{\bar k})$, $d(X_{\bar k})$).
 \end{definition}

\begin{definition}\label{defn:pic} A Fano scheme $X\to B$  has Picard number $\rho$ (resp.~index $r$, degree $d$)  if all its geometric fibres have Picard number $\rho$ (resp.~index $r$, degree $d$).
\end{definition}

A Fano variety over a field $k$ is called \emph{split} if the natural map $\Pic X \to \Pic X_{\bar{k}}$ is an isomorphism.

\begin{lemma}\label{lem:picard_rank} 
Let $n\in \NN$ and let $B$ be the spectrum of a regular local Noetherian ring with generic point $\eta$ and closed point $b$. Assume that $\chr \kappa(\eta) = 0 $ and that either $\chr \kappa(b) = 0$ or $\chr \kappa(b) >n$. Let $f: X \to B$ be a Fano $n$-fold such that $X_\eta$ and $X_b$ are both split. Then the natural maps 
\begin{equation} \label{def:specialise}
	\Pic X \to \Pic X_\eta, \quad 
	\Pic X \to \Pic X_b
\end{equation} 
are isomorphisms.
\end{lemma}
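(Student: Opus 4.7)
The plan is to first show that the relative Picard space $\mathcal{P} := \mathrm{Pic}_{X/B}$ is étale and isomorphic to the constant group space $\underline{M}$ where $M := \Pic(X_{\bar b})$, then combine this with a direct divisor-theoretic argument identifying $\Pic X$ with $\Pic X_\eta$.

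For the first step, the characteristic hypothesis lets Kodaira vanishing apply to each Fano fibre $X_s$, giving $H^1(X_s, \mathcal{O}_{X_s}) = 0$ for all $s \in B$. This makes $\mathcal{P} \to B$ étale (the tangent space of the identity component vanishes at every fibre). An étale group space over the connected base $B$ corresponds to a continuous $\pi_1^{\et}(B, \bar b)$-module, namely $M$. Since $B$ is normal (being regular), the map $\pi_1(\eta) \twoheadrightarrow \pi_1^{\et}(B, \bar b)$ is surjective, and the splitting of $X_\eta$ makes $\Gal(\bar\eta/\eta)$ act trivially on $\Pic(X_{\bar\eta}) \cong M$ (the identification being via specialisation along $\mathcal{P}$). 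Hence $\pi_1^{\et}(B, \bar b)$ acts trivially on $M$, so $\mathcal{P} \cong \underline{M}$; in particular the restriction maps $\mathcal{P}(B) \to \mathcal{P}(\eta)$ and $\mathcal{P}(B) \to \mathcal{P}(b)$ are the identity on $M$.

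For the second step, since $X \to B$ is smooth and $B$ is regular, $X$ is regular. Any Weil divisor on $X_\eta$ extends to a Weil divisor on $X$ by scheme-theoretic closure, which is Cartier by regularity; so the map $\Pic X \to \Pic X_\eta$ is surjective. It is also injective: any class in the kernel is represented by a divisor supported in $f^{-1}(V(\mathfrak p))$ for some height-one prime $\mathfrak p$ of $R$, but by Auslander--Buchsbaum $R$ is a UFD, so $\mathfrak p = (t_{\mathfrak p})$ is principal and $f^{-1}(V(\mathfrak p))$ equals the principal Cartier divisor of $f^*t_{\mathfrak p}$ on $X$.

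To conclude, the Leray exact sequence applied to $f_*\Gm$ together with $\Pic B = 0$ gives an injection $\Pic X \hookrightarrow \mathcal{P}(B)$. Composing with restriction to $\eta$ and the splitting identification $\mathcal{P}(\eta) = \Pic X_\eta$ recovers the restriction $\Pic X \to \Pic X_\eta$, which is an isomorphism by the previous step; combined with $\mathcal{P}(B) \cong \mathcal{P}(\eta)$ (by constancy of $\mathcal{P}$), this forces $\Pic X \hookrightarrow \mathcal{P}(B)$ to be an isomorphism. Composing with restriction to $b$ and the splitting identification $\mathcal{P}(b) = \Pic X_b$ similarly yields the isomorphism $\Pic X \to \Pic X_b$. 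The main conceptual obstacle is the Brauer obstruction in the Leray sequence; we sidestep it by first establishing $\Pic X \cong \Pic X_\eta$ directly via regularity, after which constancy of $\mathcal{P}$ propagates the identification to the closed fibre.
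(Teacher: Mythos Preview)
Your argument is essentially correct and reaches the same conclusion, but by a genuinely different route than the paper. For the first map $\Pic X \to \Pic X_\eta$, both you and the paper use the regularity of $X$: the paper cites \cite[Lem.~3.1.1]{Har94}, while you spell out the divisor-closure argument together with Auslander--Buchsbaum. For the second map, however, the paper simply passes to the completion of $B$ and invokes the vanishing $H^1(X_b,\mathcal O_{X_b})=H^2(X_b,\mathcal O_{X_b})=0$ from Kodaira (valid by the characteristic hypothesis) to conclude via Grothendieck's formal GAGA that the Picard scheme is formally \'etale over the complete base, which immediately lifts line bundles from $X_b$ to $X$. Your approach instead works with the relative Picard space $\mathcal P=\mathrm{Pic}_{X/B}$ as a $\pi_1^{\et}(B)$-module, combined with the Leray sequence. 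This is more structural and gives a nice global picture of $\mathcal P$ as a constant sheaf, at the cost of considerably more machinery.

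Two technical points deserve tightening. First, $H^1(X_s,\mathcal O_{X_s})=0$ alone only gives that $\mathcal P\to B$ is \emph{unramified}; to get \'etaleness (and hence the lifting needed for local constancy) you also need $H^2(X_s,\mathcal O_{X_s})=0$, which Kodaira vanishing equally supplies but which you should state. Second, the assertion ``an \'etale group space over connected $B$ corresponds to a $\pi_1^{\et}(B,\bar b)$-module'' is not automatic: it requires $\mathcal P$ to be \emph{locally constant}, i.e.\ that each connected component be finite \'etale over $B$. This does hold here, but the cleanest justification is precisely the formal-lifting-plus-algebraisation argument the paper uses (applied over strict henselisations), so your $\pi_1$ step is implicitly relying on the same input. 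Alternatively, you can avoid the $\pi_1$ language entirely: once you know $\Pic X \hookrightarrow \mathcal P(B)\hookrightarrow \mathcal P(\eta)=\Pic X_\eta$ and that the composite is your isomorphism from the divisor argument, you get $\Pic X=\mathcal P(B)$; then the injectivity of $\mathcal P(B)\to\mathcal P(b)$ follows from separatedness of $\mathcal P$ (the zero section is open and closed), and surjectivity is exactly the formal lifting of line bundles from $X_b$.
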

\begin{proof}
Note that, as $X$ is smooth over a regular scheme, the scheme $X$ is regular.
Hence the fact that the first map is an isomorphism follows from \cite[Lem.~3.1.1]{Har94}. For the second map, we may pass to a completion and 
	assume that $B$ is complete. Indeed, as $X_\eta$ and $X_b$ are both split,
	the Picard groups are left unchanged. By our assumptions on the residue characteristics of $B$, we can use Kodaira vanishing (see \cite{DI} or \cite[Cor.~5.2]{EVbook}) to find that $\mathrm{H}^1(X_b, \OO_{X_b})=\mathrm{H}^2(X_b, \OO_{X_b})=0$.
	Thus the Picard scheme is formally \'{e}tale \cite[Cor.~6.1]{GrothGFGA},
	which proves the result.
\end{proof}

\begin{proposition} \label{prop:invariants}
	Let $n \in \NN$ and let $B$ be a connected scheme with all generic points of characteristic $0$
	and all other points either of characteristic $0$ or characteristic greater than $n$.
	Let $f: X \to B$ be a Fano $n$-fold. Then the Picard number, the index, and the degree are constant on the   fibres of $f$.
\end{proposition}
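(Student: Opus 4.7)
The plan is to show that each of $\rho$, $r$, $d$ defines a locally constant $\ZZ$-valued function on $B$; connectedness of $B$ then yields the result. These invariants depend only on the geometric fibres of $f$, and hence are stable under any base change surjective on topological spaces; local constancy may therefore be verified after any convenient such base change. By a standard limit argument I first reduce to the case in which $B$ is Noetherian.

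I claim it suffices to prove the following specialisation statement: \emph{for any $\xi, b \in B$ with $\xi$ a generisation of $b$ (i.e.\ $b \in \overline{\{\xi\}}$), one has $\rho(X_\xi) = \rho(X_b)$, $r(X_\xi) = r(X_b)$ and $d(X_\xi) = d(X_b)$.} Granting this, fix $b \in B$ and let $U$ be the complement in $B$ of the union of those (finitely many) irreducible components not passing through $b$. For any $b'' \in U$, pick an irreducible component $Z \ni b''$; by construction $b \in Z$ as well, so the generic point $\xi_Z$ of $Z$ is a generisation of both $b''$ and $b$. Applying the specialisation statement twice gives that the invariants at $b''$, $\xi_Z$ and $b$ all coincide. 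Hence the invariants are constant on the open set $U \ni b$, proving local constancy at $b$.

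To prove the specialisation statement: by \cite[Tag 054F]{stacks-project}, there exists a discrete valuation ring $R$ and a morphism $T := \Spec R \to B$ sending the closed and generic points of $T$ to $b$ and $\xi$ respectively, and it is enough to prove that the invariants are constant on the two fibres of $X_T \to T$. The base $T$ is regular local Noetherian and inherits the characteristic hypotheses on $B$. To apply Lemma \ref{lem:picard_rank} I need both fibres of $X_T$ to be split; since $\Pic X_{\overline{b}}$ and $\Pic X_{\overline{\xi}}$ are finitely generated (see \cite[Thm.~1.1]{SB97}), the relevant Galois actions factor through finite quotients, and a suitable finite extension $R'/R$ (whose normalisation remains a DVR) simultaneously trivialises both. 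After replacing $T$ by $T' := \Spec R'$, Lemma \ref{lem:picard_rank} furnishes compatible isomorphisms
\[
\Pic X_{T', \tau} \xleftarrow{\sim} \Pic X_{T'} \xrightarrow{\sim} \Pic X_{T', t},
\]
which carry $-K_{X_{T'}}$ to $-K_{X_{T', \tau}}$ and $-K_{X_{T', t}}$ respectively, by relative adjunction. This yields equality of Picard number and of index on the two fibres. Constancy of the degree then follows from the invariance of top self-intersection numbers in a flat proper family, combined with constancy of $r$.

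The main technical obstacle is the bookkeeping: ensuring that after the successive base changes to $T$ and then to $T'$, the characteristic hypotheses (needed for Kodaira vanishing inside Lemma \ref{lem:picard_rank}) and the regular local Noetherian structure of the base are both preserved. Both are straightforward to verify in the DVR setting.
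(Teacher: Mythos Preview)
Your argument follows the same strategy as the paper: reduce to a regular local base with both fibres split, apply Lemma~\ref{lem:picard_rank}, and track the relative anticanonical class to handle the index. The paper simply asserts these reductions without detail; you spell them out via a DVR trick (Tag~054F) and an irreducible-components argument for local constancy, which is a perfectly good way to justify them.

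Two imprecisions should be tightened. First, your specialisation claim is stated for an \emph{arbitrary} generisation $\xi$ of $b$, but the appeal to Lemma~\ref{lem:picard_rank} requires $\chr\kappa(\eta)=0$ for the generic point of the DVR, i.e.\ $\chr\kappa(\xi)=0$. Your sentence ``$T$ \ldots inherits the characteristic hypotheses on $B$'' is false when $\xi$ is not a generic point of $B$. Since your local-constancy step only ever takes $\xi=\xi_Z$ a generic point of $B$, simply restrict the specialisation statement to that case. Second, the phrase ``a suitable finite extension $R'/R$ (whose normalisation remains a DVR) simultaneously trivialises both'' does not quite make sense: enlarging the fraction field of $R$ does nothing to split the special fibre. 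The honest construction is in two steps --- first extend the fraction field and take the integral closure localised at a maximal ideal to split the generic fibre, obtaining a DVR $R_1$; then make a further extension of $R_1$ enlarging the residue field to split the special fibre. Under your characteristic hypotheses Kodaira vanishing gives $\mathrm{H}^1=\mathrm{H}^2=0$ for the structure sheaf of each fibre, so the Picard scheme is \'etale and a finite \emph{separable} residue-field extension already suffices; such an extension lifts to a finite \'etale DVR extension of $R_1$.
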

\begin{proof}
	To prove the result, we may and do assume that $B$ is the spectrum of a regular local ring  with generic point $\eta$ and closed point $b$. We may also assume that  $X_\eta$ and $X_b$ are both split. 
	 We need only show that the invariants over the generic point $\eta$ and the closed point $b$
	agree.  
	
	First consider the relative anticanonical bundle $\omega_{X/B}^{-1}$. This induces the anticanonical
	bundle on each fibre. This observation, together with Lemma \ref{lem:picard_rank}, 
	implies that the Picard number and the index are constant. 
	For the degree, it suffices to note that $-K_{X_c}^n$ is constant as $f$ is flat and proper.	
\end{proof}

\begin{remark}
Note that in the statement of Proposition \ref{prop:invariants} it is crucial that we defined the invariants geometrically. Consider the quadric
surface
$$x_0^2 + x_1^2 = x_2x_3 \quad \subset \mathbb P^3_\ZZ.$$
Here the generic fibre has Picard group $\ZZ$, whereas the fibre over any prime which is $1 \bmod 4$ has Picard group $\ZZ^2$. Similarly, consider the conic
$$x_0^2 + x_1^2 + x_2^2 = 0 \quad \subset \mathbb P^2_\ZZ.$$
The anticanonical divisor on the generic fibre is not divisible, but modulo all odd primes the anticanonical
divisor becomes divisible by $2$.
\end{remark}

\begin{remark}
	It is quite possible that versions of Proposition \ref{prop:invariants} hold in greater generality
	(i.e.~without the restrictive hypothesis on the characteristics), and can be deduced using
	the ``Bloch-Srinivas method'' (see \cite{BS83} or \cite[\S 10.2]{VoisinII}), and the rational chain connectedness of Fano varieties \cite{CampanaRCC, KMM}. We do not explore these further in this paper, as our more elementary
	result 
	will be sufficient for our purposes. 
\end{remark}

\section{Fano threefolds} \label{section:Fanothreefolds}
\subsection{The classification of   Fano threefolds}\label{section:clas}

We will frequently use the classification of Fano threefolds $X$ with Picard number $1$ over an algebraically closed field of arbitrary characteristic. There are precisely $17$ families; see \cite[Table 12.2]{IskPro}, \cite{Megyesi} and \cite{SB97}. These are classified according to their index and degree (Definition \ref{defn:pic}). In index $1$ however, it is customary to work instead with the genus $g(X) = \Degree(X)/2 + 1$. We summarise now the cases which will be relevant in this paper.

Let $X$ be a Fano threefold over an algebraically closed field $k$ with $\rho(X) = 1$. 
\begin{itemize}
	\item If $\Index(X) =4$, then $X \cong \mathbb P^3_k$.
	\item If $\Index(X) =3$, then $X$ is isomorphic to a smooth quadric in $\mathbb P^4_k$.
	\item If $\Index(X) =2$, then $1\leq \Degree(X) \leq 5$. We have the following possibilities.
	\begin{itemize}
		\item[] $\Degree(X) = 1:$ Hypersurface of degree $6$ in $\PP_k(3,2,1,1,1)$.
		\item[] $\Degree(X) = 2:$ Hypersurface of degree $4$ in $\PP_k(2,1,1,1,1)$.
		\item[] $\Degree(X) = 3:$ Cubic threefold.
		\item[] $\Degree(X) = 4:$ Complete intersection of two quadrics in $\PP^5_k$.				
		\item[] $\Degree(X) = 5:$ A section of the Grassmannian 
		$\mathrm{Gr}(2,5)\subset \mathbb P^9_k$ by a linear subspace of codimension $3$.	
	\end{itemize}
	\item If $\Index(X) =1$, then $1\leq g(X) \leq 12$ and $g(X) \neq 11$. We will be interested in the following special cases.
	\begin{itemize}
		\item[] $g(X) = 2:$ Hypersurface of degree $6$ in $\PP_k(3,1,1,1,1)$.
		\item[] $g(X) = 3:$ Quartic threefold, or a double cover of a smooth quadric 
		$Q\subset \mathbb P^4_k$ ramified along a divisor of degree $8$ in $Q$.
		\item[] $g(X) = 4:$ Complete intersection of a quadric and a cubic in $\PP^5_k$.
		\item[] $g(X) = 5:$ Complete intersection of three quadrics in $\PP^6_k$.
	\end{itemize}
\end{itemize}

\begin{remark}
  Here it is crucial that we are working over an algebraically closed field. A non-split Fano threefold $X$ of Picard number $1$ over an arbitrary field $k$ may not be embeddable into a projective space in the above form, as the fundamental divisor of $X_{\bar{k}}$ may not be defined over $k$ when $\Index(X) \geq 2$ (consider a non-split Brauer-Severi threefold).
\end{remark}

\subsection{The stack of Fano threefolds with Picard number one}\label{section:fid}
  We will use the following result of Shepherd-Barron \cite{SB97} (that itself is an application of Ekedahl's work \cite{Ekedahl}) to prove a stronger version of Proposition \ref{prop:invariants} for Fano threefolds.

\begin{lemma}\label{lem:vanishing}
Let $X$ be a Fano threefold over a field $k$. Then $\mathrm{H}^1(X,\mathcal O_X) = \mathrm{H}^2(X,\mathcal O_X)=0$.\end{lemma}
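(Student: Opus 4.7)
The plan is to realize both vanishings as consequences of Kodaira-type vanishing applied with the ample line bundle $-K_X$. First, since $X\to\Spec k$ is proper and $\mathcal{O}_X$ is coherent, flat base change along $k\to\bar k$ gives $\mathrm{H}^i(X,\mathcal{O}_X)\otimes_k\bar k = \mathrm{H}^i(X_{\bar k},\mathcal{O}_{X_{\bar k}})$, so one reduces at once to the case $k = \bar k$. From here one splits into two cases according to the characteristic of $k$.

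In characteristic zero, I would invoke the classical Kodaira vanishing theorem. Since $-K_X$ is ample by hypothesis, applying Kodaira vanishing to the ample line bundle $L := -K_X$ yields
\[
\mathrm{H}^i(X,\mathcal{O}_X) \;=\; \mathrm{H}^i(X, K_X \otimes L) \;=\; 0 \qquad \text{for all } i>0,
\]
and in particular for $i=1,2$. (Alternatively, one can phrase the argument through the dual statement $\mathrm{H}^i(X,L^{-1}) = 0$ for $i < \dim X$ and then apply Serre duality; but the direct form suffices.)

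In positive characteristic, Kodaira vanishing is known to fail in general for smooth projective varieties, so one cannot conclude directly. This is precisely where the result of Shepherd-Barron \cite{SB97} (leveraging Ekedahl's work \cite{Ekedahl}) enters: it establishes that Kodaira vanishing does hold for any Fano threefold, that is, $\mathrm{H}^i(X, L^{-1}) = 0$ for every ample line bundle $L$ on $X$ and every $i < 3$. Taking $L = -K_X$ gives $\mathrm{H}^i(X, K_X) = 0$ for $i < 3$, and then Serre duality on the smooth proper threefold $X$ gives
\[
\mathrm{H}^i(X, \mathcal{O}_X) \;\cong\; \mathrm{H}^{3-i}(X, K_X)^{\vee} \;=\; 0 \qquad \text{for } i = 1, 2.
\]

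The only real obstacle is the positive characteristic case; the characteristic-zero half is essentially one line from Kodaira vanishing, while in characteristic $p>0$ the conclusion is genuinely substantial and depends on the nontrivial input of Shepherd-Barron's theorem, which rests on Ekedahl's analysis of Frobenius in mixed characteristic. In the proof I would simply cite this result as the key black box and combine it with Serre duality as above.
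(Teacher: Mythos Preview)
Your proposal is correct and follows essentially the same approach as the paper: both rely on Shepherd-Barron's result \cite[Cor.~1.5]{SB97} as the key input. The paper simply cites this reference directly (together with \cite[Cor.~2]{Megyesi}), whereas you expand the argument by treating characteristic zero separately via classical Kodaira vanishing and then invoking Shepherd-Barron plus Serre duality in positive characteristic; this is a faithful unpacking of what the cited corollary gives.
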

\begin{proof}
This follows from \cite[Cor.~1.5]{SB97} (cf.~\cite[Cor.~2]{Megyesi}).
\end{proof}

\begin{lemma}\label{lem:openness}
Let $X\to B$ be a  Fano threefold over a connected scheme $B$. Then the Picard number, the index, and the degree are constant in the fibres of $X\to B$. 
\end{lemma}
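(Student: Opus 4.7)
The plan is to mimic the proof of Proposition \ref{prop:invariants}, with the key improvement that Lemma \ref{lem:vanishing} replaces the appeal to Kodaira vanishing. This is precisely what allows us to drop the characteristic hypotheses that appear in Proposition \ref{prop:invariants}, since Lemma \ref{lem:vanishing} is unconditional for Fano threefolds.

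First I would reduce, exactly as in Proposition \ref{prop:invariants}, to the case where $B = \Spec R$ is the spectrum of a strictly henselian (in fact, complete) regular local ring, with generic point $\eta$ and closed point $b$; in particular we may assume $X_\eta$ and $X_b$ are both split, since the Picard numbers of the geometric fibres are unchanged under this reduction. As $X$ is smooth over the regular scheme $B$, it is itself regular, so the specialisation map $\Pic X \to \Pic X_\eta$ is an isomorphism by \cite[Lem.~3.1.1]{Har94} (this part used no characteristic hypothesis).

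The heart of the proof is to show $\Pic X \to \Pic X_b$ is an isomorphism. For this I would pass to the completion of $R$ (harmless because both fibres are split), and then invoke Lemma \ref{lem:vanishing} applied to $X_b$, which is itself a Fano threefold over the residue field $\kappa(b)$: it gives $\mathrm{H}^1(X_b, \OO_{X_b}) = \mathrm{H}^2(X_b, \OO_{X_b}) = 0$ in every characteristic. This vanishing implies that $\mathrm{Pic}_{X/B}$ is formally \'etale along the closed fibre by \cite[Cor.~6.1]{GrothGFGA}, and hence the specialisation map on Picard groups is an isomorphism. This is exactly the argument of Lemma \ref{lem:picard_rank}, but with Lemma \ref{lem:vanishing} standing in for Kodaira vanishing, so no restriction on residue characteristics is needed.

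With both specialisation maps being isomorphisms, the constancy of the Picard number and the index follow formally: the relative anticanonical bundle $\omega_{X/B}^{-1}$ restricts to the anticanonical bundle on each fibre, and divisibility of this class in $\Pic X$ now transfers to both $\Pic X_\eta$ and $\Pic X_b$. The constancy of the degree is immediate from flatness and properness of $f$: the top self-intersection $(-K_{X_c})^3$ is an Euler characteristic-type invariant that is locally constant on $B$ (alternatively one notes $(-K_{X_c})^3 = \chi(X_c, \omega_{X_c}^{-m})$ is a polynomial in $m$ with locally constant coefficients). The only potential obstacle is the residue characteristic issue in the vanishing of $\mathrm{H}^i(X_b,\OO_{X_b})$, but this has already been dealt with for us by Lemma \ref{lem:vanishing}, so the proof should go through cleanly.
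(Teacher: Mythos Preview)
Your proposal is correct and follows precisely the approach the paper intends: the paper's own proof simply says to ``use Lemma \ref{lem:vanishing} and the arguments in the proofs of Lemma \ref{lem:picard_rank} and Proposition \ref{prop:invariants},'' and you have faithfully unpacked exactly that, replacing the appeal to Kodaira vanishing by the unconditional Lemma \ref{lem:vanishing}. There is no substantive difference between your argument and the paper's.
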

\begin{proof} Use Lemma \ref{lem:vanishing} and the arguments in the proofs of   Lemma \ref{lem:picard_rank} and Proposition \ref{prop:invariants}. 
\end{proof}

Let $\mathcal F$ be the substack of $\textsc{Fano}$ whose objects are Fano threefolds with Picard number $1$ (Definition \ref{defn:pic}).

\begin{lemma}\label{lem:stack_F}
The stack $\mathcal F $ is an algebraic stack of finite type over $\ZZ$ with affine and finite type diagonal. The natural morphism $\mathcal F\to \textsc{Fano}$ is a representable open and  closed immersion, and the stack $\mathcal F_\QQ$ is smooth.
\end{lemma}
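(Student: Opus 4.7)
The plan is to realize $\mathcal F$ as a union of connected components of $\textsc{Fano}$, from which every property except finite-type-ness is immediate via Lemmas~\ref{lem:algebraic} and~\ref{lem:smoothness}, and then to obtain finite-type-ness via the boundedness of Fano threefolds of Picard number $1$ afforded by the classification in Section~\ref{section:clas}.

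First I would show that $\mathcal F \to \textsc{Fano}$ is a representable open and closed immersion. Given any connected scheme $B$ and a morphism $B \to \textsc{Fano}$ classified by a Fano scheme $X \to B$, the relative dimension of $X/B$ is locally constant, and by Lemma~\ref{lem:openness} so is the geometric Picard number of the fibres. Consequently $\mathcal F \times_{\textsc{Fano}} B$ is either $B$ or empty, which identifies $\mathcal F$ with a union of connected components of $\textsc{Fano}$. This gives both the representability and the open-and-closed character of the inclusion. Algebraicity of $\mathcal F$, together with the affineness and finite-type-ness of its diagonal, and the smoothness of $\mathcal F_\QQ$, are then inherited directly from Lemmas~\ref{lem:algebraic} and~\ref{lem:smoothness}, since these properties pass to open-and-closed substacks.

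The remaining step is upgrading the locally-finite-type property inherited from $\textsc{Fano}$ to finite type over $\ZZ$, which amounts to proving quasi-compactness of $\mathcal F$. I expect this to be the main obstacle, and would treat it by appealing to the classification recalled in Section~\ref{section:clas}: over any algebraically closed field of arbitrary characteristic there are only $17$ families of Fano threefolds of Picard number $1$, and in particular the degree $(-K_X)^3$ is uniformly bounded. Combined with a uniform effective very-ampleness statement for $-mK_X$ (with $m$ depending only on this degree bound, available for example from the vanishing theorems of Shepherd-Barron~\cite{SB97} or read off case by case from the classification), one obtains closed embeddings of all Fano threefolds of Picard number $1$ into a fixed-dimensional projective space with Hilbert polynomial in a finite list. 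Hence $\mathcal F$ is covered by a surjection from a locally closed subscheme of a finite disjoint union of quasi-compact Hilbert schemes over $\ZZ$, which yields quasi-compactness. The essential input here is that the boundedness statement be uniform in all residue characteristics, which is guaranteed by the Shepherd-Barron--Megyesi extension of the classical Iskovskikh classification to positive characteristic.
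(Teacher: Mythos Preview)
Your proposal is correct and follows essentially the same approach as the paper: use Lemma~\ref{lem:openness} to identify $\mathcal F$ as an open and closed substack of $\textsc{Fano}$, inherit algebraicity, diagonal properties, and smoothness over $\QQ$ from Lemmas~\ref{lem:algebraic} and~\ref{lem:smoothness}, and obtain finite-type-ness from the classification together with Hilbert-scheme boundedness. The paper's proof is terser on the last point (it simply invokes ``the classification \ldots\ and finiteness properties of Hilbert schemes''), but your elaboration via uniform very-ampleness and a finite list of Hilbert polynomials is exactly what is meant.
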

\begin{proof}  By Lemma \ref{lem:openness}, the forgetful morphism $\mathcal F\to \textsc{Fano}$ is a representable open and closed immersion.
 Therefore, it follows from Lemma \ref{lem:algebraic} that $\mathcal F$ is an algebraic stack, locally of finite type over $\ZZ$ with affine and finite type diagonal.

The stack $\mathcal{F}$ is of finite type by the classification of   Fano threefolds with geometric Picard number one (Section \ref{section:clas}) and finiteness properties of Hilbert schemes. 

   Finally, the smoothness of $\mathcal F_{\QQ}$ follows from the smoothness of $\textsc{Fano}_\QQ$ (Lemma \ref{lem:smoothness}) and the fact that $\mathcal F_{\QQ}\to \textsc{Fano}_{\QQ}$ is  an open immersion. This concludes the proof of the lemma.
\end{proof}

We now introduce certain substacks of $\mathcal{F}$. Let $r,d \in \NN$ and let $\mathcal F_{r,d}$ be the   substack of $\mathcal F$ with objects Fano threefolds $f:X\to B$ whose fibres have index $r$ and degree $d$. The stack $\mathcal F_{r,d}$ is non-empty if and only if $r$ and $d$ take the values arising in the classification (Section \ref{section:clas}). 

\begin{lemma}\label{lem:stack_Fid} The stack $\mathcal F_{r,d}$ is an algebraic stack of finite type over $\ZZ$ with affine and finite type diagonal, and the natural representable morphism $\mathcal F_{r,d} \to \mathcal F$ of stacks is an open and closed immersion. 
The substacks $\mathcal F_{r,d,\QQ}$ of the stack $\mathcal F_\QQ$ are smooth over $\QQ$. 
\end{lemma}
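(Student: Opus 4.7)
My plan is to reduce everything to what has already been established for $\mathcal{F}$ in Lemma \ref{lem:stack_F}, using Lemma \ref{lem:openness} to see that the condition of having index $r$ and degree $d$ cuts out a union of connected components.

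First I would show that the natural morphism $\mathcal{F}_{r,d} \to \mathcal{F}$ is a representable open and closed immersion. Given any scheme $B$ and a morphism $B \to \mathcal{F}$ corresponding to a Fano threefold $X \to B$ with $\rho(X_{\bar b}) = 1$ for all $b \in B$, the fibre product $\mathcal{F}_{r,d} \times_{\mathcal{F}} B$ is represented by the locus $B_{r,d} \subseteq B$ of points $b$ such that $X_{\bar{b}}$ has index $r$ and degree $d$. By Lemma \ref{lem:openness} applied on each connected component of $B$, the functions $b \mapsto r(X_{\bar b})$ and $b \mapsto d(X_{\bar b})$ are locally constant on $B$, hence $B_{r,d}$ is a union of connected components of $B$, in particular open and closed in $B$. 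This shows representability together with the open-and-closed immersion property.

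From this point the remaining claims follow formally. Since open and closed immersions are in particular monomorphisms of finite type, and since by Lemma \ref{lem:stack_F} the stack $\mathcal{F}$ is algebraic, locally of finite type over $\ZZ$, with affine and finite type diagonal, the same properties are inherited by $\mathcal{F}_{r,d}$. Finite-typeness over $\ZZ$ for $\mathcal{F}_{r,d}$ itself is likewise immediate: an open and closed substack of a finite type stack is of finite type. Similarly, the base change $\mathcal{F}_{r,d,\QQ} \to \mathcal{F}_{\QQ}$ is an open (and closed) immersion, and openness preserves smoothness, so the smoothness of $\mathcal{F}_{\QQ}$ from Lemma \ref{lem:stack_F} yields the smoothness of $\mathcal{F}_{r,d,\QQ}$ over $\QQ$.

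I do not anticipate any serious obstacle: the core content is Lemma \ref{lem:openness}, which guarantees the locus of fixed $(r,d)$ is clopen, and everything else is a formal transfer of properties through an open and closed immersion. The only thing to take some care with is the stack-theoretic formulation of ``$b \mapsto r(X_{\bar b})$ is locally constant'': one checks this by pulling back to an fppf cover by a scheme and applying Lemma \ref{lem:openness} there, which is valid since the property of being representable by an open and closed immersion is fppf local on the target.
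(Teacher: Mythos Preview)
Your proposal is correct and follows exactly the same approach as the paper: use Lemma~\ref{lem:openness} to see that $\mathcal{F}_{r,d}\to\mathcal{F}$ is a representable open and closed immersion, and then inherit all remaining properties from Lemma~\ref{lem:stack_F}. The paper's proof is a terse two-line version of what you wrote; your additional detail (checking the fibre product over a test scheme, noting local constancy on connected components, and the remark about fppf-local verification) is sound and simply fleshes out what the paper leaves implicit.
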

\begin{proof}
We find from Lemma \ref{lem:openness} that the forgetful morphism $\mathcal F_{r,d}\to \mathcal F$ is representable by an open and closed immersion, so that the lemma follows from Lemma \ref{lem:stack_F}.
\end{proof}

\begin{remark}
The properties of the stacks $\mathcal F_{r,d}$ vary  with $r$ and $d$. For example, the stack $\mathcal F_{2,3}$ is isomorphic to the stack $\mathcal C_{(3;3)}$ of smooth cubic threefolds, and $\mathcal F_{2,4}$ is isomorphic to the stack of three-dimensional smooth intersections of two quadrics $\mathcal C_{(2,2;3)}$ (see \cite{Ben13} for precise definitions of these stacks). By \cite[Thm.~1.6]{Ben13} the algebraic stack $\mathcal F_{2,3}$ is Deligne-Mumford over $\ZZ$, whereas $\mathcal F_{2,4}$ is only Deligne-Mumford over $\ZZ[1/2]$.
Moreover, there are $r$ and $d$ such that $\mathcal F_{r,d}$  is not Deligne-Mumford even over $\QQ$, e.g.~the stack $\mathcal F_{4,1}=B(\PGL_4)$ of Brauer-Severi threefolds. 
\end{remark}

\subsection{Separatedness of the stack of Fano threefolds}\label{section:separatedness}
We now study the separatedness of the stack of certain Fano threefolds over some dense open of $\Spec \ZZ$, and deduce consequences for finiteness of Isom-schemes and uniqueness of good models.
Note that some restriction on the index and degree are required here. For example, the stack $\mathcal{F}_{4,1}$ of Brauer-Severi threefolds is not separated.  
We will work with the following collection of indices and degrees.
\begin{equation} \label{def:Slist}
	\Slist = \{(1,2), (1,4), (1,6), (1,8), (1,14),  (2,1),(2,2),(2,3),(2,4)\}.
\end{equation}

\begin{lemma}[Separatedness]\label{lem:separatedness}
There exists an integer $N\geq 1$ such that for all $(r,d) \in \Slist$ the algebraic stack $\mathcal F_{r,d, \ZZ[1/N]}$ is separated over $\ZZ[1/N]$.
\end{lemma}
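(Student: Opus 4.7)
The plan is to verify the valuative criterion of separatedness for $\mathcal{F}_{r,d,\ZZ[1/N]}$ for each $(r,d) \in \Slist$ via a case-by-case analysis based on the classification recalled in Section \ref{section:clas}. Concretely, given a DVR $R$ over $\ZZ[1/N]$ with fraction field $K$, two families $X, Y \to \Spec R$ in $\mathcal{F}_{r,d}(R)$, and an isomorphism $\phi: X_K \to Y_K$, we must show $\phi$ extends uniquely to an isomorphism $X \to Y$. Since both $X$ and $Y$ are Fano with $-K$ relatively ample, $\phi$ automatically preserves the canonical bundle, hence defines an isomorphism of $(-K)$-polarised families.

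The main tool is the classical theorem of Matsusaka--Mumford \cite{MatMum}: if two smooth proper polarised families over a DVR have non-ruled special fibres, then any polarised isomorphism of their generic fibres extends to an isomorphism of the families. For the eight cases $(r,d) \in \Slist \setminus \{(2,4)\}$, the corresponding Fano threefolds of Picard number $1$ over $\CC$ are classically known to be non-ruled: via Clemens--Griffiths for cubic threefolds $(2,3)$, via Iskovskikh--Manin for quartic threefolds and double covers of a smooth quadric $(1,4)$, via birationality to the cubic threefold for $V_{14}$'s $(1,14)$, and via analogous classical arguments (often relying on a nontrivial intermediate Jacobian or on birational rigidity) for the sextic double solids $(1,2)$, the weighted hypersurfaces $(2,1)$ and $(2,2)$, and the complete intersections $(1,6)$ and $(1,8)$. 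Matsusaka--Mumford thus yields separatedness of $\mathcal{F}_{r,d,\CC}$ in each of these eight cases.

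For the remaining case $(r,d) = (2,4)$ (smooth intersections of two quadrics in $\PP^5$), such threefolds are rational and hence ruled, so Matsusaka--Mumford does not apply. Here the separatedness of $\mathcal{F}_{2,4,\ZZ[1/2]}$ is a direct consequence of Benoist's theorem \cite[Thm.~1.6]{Ben13} on stacks of smooth complete intersections, which in fact asserts that $\mathcal{F}_{2,4}$ is Deligne--Mumford (and, in particular, separated) over $\ZZ[1/2]$. Choosing $N$ divisible by $2$ and by sufficiently many additional primes to handle the non-ruledness hypothesis for the Matsusaka--Mumford cases in positive characteristic, we obtain separatedness of $\mathcal{F}_{r,d, \ZZ[1/N]}$ for all $(r,d) \in \Slist$ simultaneously.

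The main obstacle is propagating non-ruledness from $\CC$ to the residue fields of $\Spec \ZZ[1/N]$. The classical proofs of non-ruledness are Hodge-theoretic or use birational rigidity, and are intrinsically characteristic zero; one must therefore spread out these properties from $\CC$ to an open subscheme of $\Spec \ZZ$. Fortunately, non-ruledness is generic in proper flat families of bounded degree, so a standard spreading-out argument shows that it persists after inverting finitely many primes, completing the reduction to the characteristic-zero case already treated by Matsusaka--Mumford and Benoist.
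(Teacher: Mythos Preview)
Your overall strategy---Matsusaka--Mumford for the non-ruled cases and Benoist for the rational one---matches the paper's in spirit, but there is a genuine gap in your spreading-out step. You assert that ``non-ruledness is generic in proper flat families of bounded degree'' and invoke a ``standard spreading-out argument'' to pass from characteristic zero to $\ZZ[1/N]$. This is not a standard fact: the locus of ruled (or rational) fibres in a family is not known to be constructible, and indeed the behaviour of rationality and ruledness under specialisation is delicate (recent work of Nicaise--Shinder, Kontsevich--Tschinkel, and others addresses specialisation in one direction for individual DVRs, but does not give the uniform constructibility you need). In particular, knowing that every $\CC$-fibre of the universal family is non-ruled does not, by any routine argument, yield an $N$ such that every geometric fibre over $\ZZ[1/N]$ is non-ruled. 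Since Matsusaka--Mumford requires non-ruledness of the \emph{special} fibre, your valuative-criterion argument over mixed-characteristic DVRs is incomplete as written.

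The paper avoids this difficulty by spreading out a different, purely formal property. It first proves that $\mathcal{F}_{r,d,\QQ}$ is separated over $\QQ$: for the complete-intersection cases $(r,d)\in\{(1,6),(1,8),(2,3),(2,4)\}$ this is Benoist's theorem directly, and for the remaining cases one cites non-rationality over $\CC$ and invokes \cite[Cor.~1.3]{Ben13} (which packages Matsusaka--Mumford). Then, since $\mathcal{F}_{r,d}$ is of finite type over $\ZZ$, one applies spreading out of \emph{separatedness} for morphisms of finite-type algebraic stacks \cite[Prop.~B.3.(xvii)]{Rydh2} to obtain separatedness over $\ZZ[1/N]$. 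The point is that separatedness, unlike non-ruledness, is a property of the diagonal morphism and spreads out by general limit arguments; no geometric input in positive characteristic is required. Reorganising your argument along these lines---first establish separatedness over $\QQ$, then spread out the stack-theoretic property rather than the birational-geometric one---closes the gap.
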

\begin{proof}
 Write  $\mathcal M = \mathcal F_{r,d}$, and note that $\mathcal M$ is a finite type algebraic stack over $\ZZ$ (Lemma \ref{lem:stack_Fid}). We claim that  $\mathcal M_\QQ$ is separated over $\QQ$. 
Benoist's results on the separatedness of the stack of smooth complete intersections \cite{Ben13} imply that $\mathcal M_\QQ$ is separated if $(r,d) \in \{(1,6), (1,8),(2,3),(2,4)\}$. In the remaining cases,  the Fano threefolds classified by $\mathcal M_\QQ$ are non-rational \cite[Table 12.2]{IskPro} (see \cite[Thm.~9.1.6]{IskPro} for more precise references). In particular, the stack $\mathcal M_{\QQ}$ is also separated over $\QQ$ by \cite[Cor.~1.3]{Ben13} (this being an application of \cite{MatMum}). As separatedness spreads out   \cite[Prop.~B.3.(xvii)]{Rydh2}, the result follows.
\end{proof} We use separatedness to deduce the following.

\begin{lemma}[Finite Isom-schemes]\label{lem:finite_isoms}
There exists an integer $N\geq 1$ such that for all schemes $B$ with $N$ invertible on $B$, for all $(r,d)\in \Slist$, and for all $X, Y\in \mathcal F_{r,d}(B)$, the morphism  $\Isom_B(X,Y)\to B$ is finite.
\end{lemma}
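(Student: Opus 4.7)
The plan is to choose $N$ large enough that Lemma \ref{lem:separatedness} applies, and then to observe that $\Isom_B(X,Y) \to B$ is a base change of the diagonal of $\mathcal F_{r,d,\ZZ[1/N]}$, which combines properness (from separatedness) and affineness (from Lemma \ref{lem:stack_Fid}) to yield finiteness.

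First I would take $N\geq 1$ as provided by Lemma \ref{lem:separatedness}, so that $\mathcal F_{r,d,\ZZ[1/N]}$ is separated over $\ZZ[1/N]$ for every $(r,d) \in \Slist$. Given any scheme $B$ with $N$ invertible on $B$ and any $X, Y \in \mathcal F_{r,d}(B)$, the pair $(X,Y)$ induces a morphism $B \to \mathcal F_{r,d,\ZZ[1/N]} \times_{\ZZ[1/N]} \mathcal F_{r,d,\ZZ[1/N]}$, and by the very definition of $\Isom_B(X,Y)$ there is a Cartesian square
\[
\xymatrix{\Isom_B(X,Y) \ar[r] \ar[d] & \mathcal F_{r,d,\ZZ[1/N]} \ar[d]^{\Delta} \\ B \ar[r] & \mathcal F_{r,d,\ZZ[1/N]} \times_{\ZZ[1/N]} \mathcal F_{r,d,\ZZ[1/N]}}
\]
in which $\Delta$ is the diagonal of $\mathcal F_{r,d,\ZZ[1/N]}$.

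Next I would extract the relevant properties of $\Delta$ and transfer them to $\Isom_B(X,Y) \to B$ through this Cartesian square. By Lemma \ref{lem:stack_Fid} the diagonal $\Delta$ is representable, affine, and of finite type, while the separatedness asserted in Lemma \ref{lem:separatedness} is precisely the statement that $\Delta$ is proper. Pulling back along the bottom horizontal arrow therefore produces a morphism $\Isom_B(X,Y) \to B$ of schemes that is simultaneously affine, proper, and of finite type. The standard fact that a proper affine morphism of schemes is finite then concludes the argument.

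There is essentially no serious obstacle here beyond assembling the two prior lemmas. The only minor point to track is that we must invert the primes coming from Lemma \ref{lem:separatedness} in order to access properness of $\Delta$, whereas affineness and finite type of $\Delta$ are available over all of $\Spec \ZZ$ already from Lemma \ref{lem:stack_Fid}; taking $N$ from Lemma \ref{lem:separatedness} handles both issues uniformly in $(r,d) \in \Slist$.
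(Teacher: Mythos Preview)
Your proposal is correct and follows essentially the same approach as the paper: choose $N$ from Lemma~\ref{lem:separatedness}, realise $\Isom_B(X,Y)\to B$ as a pullback of the diagonal of the stack, and combine properness of the diagonal (from separatedness) with its affineness to conclude finiteness. The only cosmetic difference is that the paper cites Corollary~\ref{cor:autom_groups_are_affine} for affineness of the Isom-scheme rather than invoking the affineness of the diagonal from Lemma~\ref{lem:stack_Fid}, and it gives an explicit reference for the fact that proper plus affine implies finite.
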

\begin{proof}
Write $\mathcal M = \mathcal F_{r,d}$, and let $N\geq 1$ be such that $\mathcal{M}_{\ZZ[1/N]}$ is separated over $\ZZ[1/N]$. Such an integer $N$ exists by Lemma \ref{lem:separatedness}. 

Let $B$ be a  scheme  such that $N$ is invertible on $B$. Let $X$ and $Y$ be objects of $\mathcal M(B)$. Since $\mathcal M_B$ is separated, its diagonal is proper \cite[Def. 7.6]{LMB}. In particular, as $\Isom_B(X,Y)\to B$ is obtained by base-change from the diagonal of $\mathcal M_B$, we see that $\Isom_B(X,Y)\to B$ is proper.   By Corollary \ref{cor:autom_groups_are_affine},  the morphism of schemes $\Isom_B(X,Y)\to B$  is affine. We conclude that  $\Isom_B(X,Y)\to B$ is finite  \cite[Lem.~3.3.17]{Liu2}.  
\end{proof}

\begin{lemma}[Good models are unique]\label{lem:unicity}
There exists an integer $N\geq 1$ such that for all integral regular noetherian schemes $B$ with function field $K$ and $N$ invertible on $B$, for all $(r,d)\in \Slist$, and for all $X$ and $Y$ in $\mathcal F_{r,d}(B)$, any isomorphism $X_K\to Y_K$ over $K$ extends uniquely to an isomorphism $X\to Y$ over $B$. 
\end{lemma}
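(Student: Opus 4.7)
Let $N$ be the integer provided by Lemma~\ref{lem:finite_isoms}. The plan is to reduce the statement to a section-extension problem for the finite $B$-scheme $I:=\Isom_B(X,Y)$ and then apply Zariski's Main Theorem. By the choice of $N$, for all integral regular noetherian $B$ with $N$ invertible on $B$ and all $X,Y\in \mathcal F_{r,d}(B)$ with $(r,d)\in\Slist$, the morphism $I\to B$ is finite.

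For uniqueness, note that any finite morphism is affine, hence separated. Two isomorphisms $X\to Y$ over $B$ are the same as two $B$-sections $B\to I$; if they agree on the generic point $\Spec K$, then the locus on which they agree is a closed subscheme of $B$ (since $I\to B$ is separated) containing the generic point. Since $B$ is integral, this locus equals $B$, giving uniqueness of the extension.

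For existence, the given $K$-isomorphism $\phi_K:X_K\to Y_K$ corresponds to a $K$-rational point $\sigma\in I(K)$, that is, a morphism $\Spec K\to I$ through the generic point of $B$. Let $Z\subset I$ denote the scheme-theoretic closure of the image of $\sigma$ endowed with the reduced induced structure. Then $Z$ is an integral closed subscheme of $I$, so $Z\to B$ is finite (as $I\to B$ is finite), and by construction its generic fibre is $\Spec K$, so $Z\to B$ is birational. Now $B$ is regular, hence normal, and any finite birational morphism to a normal integral noetherian scheme is an isomorphism by Zariski's Main Theorem (see \cite[Cor.~4.4.6 and \S 4.4.9]{Liu2}, or directly from the fact that the integral closure of $\mathcal O_B$ in its fraction field is $\mathcal O_B$). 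Therefore $Z\to B$ is an isomorphism, and its inverse $B\xrightarrow{\sim} Z\hookrightarrow I$ is the desired section extending $\phi_K$ to an isomorphism $X\to Y$ over $B$.

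There is essentially no obstacle beyond the input already in hand: the separatedness part of Lemma~\ref{lem:separatedness}, packaged via Lemma~\ref{lem:finite_isoms} as finiteness of $\Isom_B(X,Y)\to B$, is what does the work, and the rest is a standard application of Zariski's Main Theorem on the normal base $B$.
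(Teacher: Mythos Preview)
Your proof is correct and follows essentially the same approach as the paper: choose $N$ via Lemma~\ref{lem:finite_isoms} so that $I=\Isom_B(X,Y)\to B$ is finite, then use Zariski's Main Theorem on the normal base $B$ to extend the $K$-point of $I$ to a unique $B$-section. The paper compresses your existence and uniqueness steps into a single citation (Zariski's Main Theorem, or \cite[Prop.~6.2]{GLL}), while you spell out the closure argument explicitly, but the content is the same.
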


\begin{proof} Let $N$ be as in Lemma \ref{lem:finite_isoms} and let $B$ be an integral regular noetherian scheme with function field $K$ on which $N$ is invertible. Let $X$ and $Y$ be objects of $\mathcal F_{r,d}(B)$ and suppose that $X_K$ and $Y_K$ are isomorphic. Then the scheme $I:=\Isom_B(X,Y)$ is finite over $B$ and has a $K$-point. Thus, by Zariski's main theorem (or more generally \cite[Prop.~6.2]{GLL}), the scheme $I$ has a unique $B$-point extending the given $K$-point. This concludes the proof. 
\end{proof}

\subsection{Fano threefolds of index $2$ and degree $5$}\label{section:index2_degree5}
 In Section \ref{section:separatedness}, we excluded Fano threefolds with Picard number $1$, index $2$ and degree $5$ (i.e.~$(2,5)\not\in \Slist$). The aim of this section is to prove the unicity of such Fano threefolds, \'{e}tale locally on the base scheme (Lemma \ref{lem:locally_isom}). This weaker analogue of Lemma \ref{lem:unicity} will turn out to be sufficient for our proof of the Shafarevich conjecture in this case.

We start with the following general criterion for a Fano variety to be infinitesimally rigid.   

\begin{lemma}\label{lem:vanishing_of_H1_abstract} Let $k$ be a field of characteristic zero, and let $n$, $\rho$, $r$ and $d$ be integers such that the set of $\bar{k}$-isomorphism classes of Fano $n$-folds with Picard number $\rho$, index $r$ and degree $d$ is finite. Then for all Fano $n$-folds $X$ over $k$ with Picard number $\rho$, index $r$ and degree $d$, we have $\mathrm{H}^1(X,\Theta_X)=0$.
\end{lemma}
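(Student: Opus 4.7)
The plan is to use versal deformation theory together with the finiteness hypothesis to derive a contradiction via the Kodaira--Spencer map. First, by flat base change of coherent cohomology $H^1(X, \Theta_X) \otimes_k \bar{k} \cong H^1(X_{\bar{k}}, \Theta_{X_{\bar{k}}})$, so one reduces to the case $k = \bar{k}$. Suppose for contradiction that $h := h^1(X, \Theta_X) > 0$. Since Fano varieties are unobstructed in characteristic zero (Lemma \ref{lem:smoothness}, citing Ran and Sano), the formal miniversal deformation space of $X$ is smooth of dimension $h$, represented by $\mathrm{Spf}\, R$ with $R = \bar{k}[[t_1, \ldots, t_h]]$. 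Grothendieck's existence theorem (applied to the ample anticanonical bundle) algebraizes the resulting formal Fano family, and Artin approximation then yields an algebraic family $\pi \colon \mathcal{X}_T \to T$ with $T$ a finite-type $\bar{k}$-scheme, a distinguished point $t_0 \in T(\bar{k})$ such that $\mathcal{X}_{t_0} \cong X$, and Kodaira--Spencer surjective at $t_0$; after shrinking, $T$ is smooth irreducible of dimension $h > 0$.

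The finiteness hypothesis lets us stratify $T$ by isomorphism class of fiber. For each $\bar{k}$-isomorphism class $[Y_i]$ of Fano $n$-fold with invariants $(\rho, r, d)$, the scheme $\Isom_T(\mathcal{X}_T, Y_i \times T)$ is of finite type over $T$ (Corollary \ref{cor:autom_groups_are_affine}), so by Chevalley's theorem its image $T_i := \{t \in T : \mathcal{X}_t \cong Y_i\}$ is constructible. By hypothesis these strata form a finite partition $T = \bigsqcup_i T_i$. Since $T$ is irreducible of positive dimension, some $T_{i_0}$ contains a dense open $U \subset T$, over which the family $\mathcal{X}_U \to U$ is isotrivial with fiber $Y := Y_{i_0}$.

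The contradiction then arises from Kodaira--Spencer. Because $\Aut(Y)$ is a smooth affine algebraic group (Corollary \ref{cor:autom_groups_are_affine} combined with Cartier's theorem in characteristic zero), the isotrivial family $\mathcal{X}_U$ is classified by an $\Aut(Y)$-torsor, so the induced map $U \to \mathcal{G}$ (where $\mathcal{G}$ denotes the substack of $\textsc{Fano}$ parametrizing our Fano $n$-folds, which is smooth by Lemma \ref{lem:smoothness} and open and closed by Proposition \ref{prop:invariants}) factors through the classifying stack $B\Aut(Y) \hookrightarrow \mathcal{G}$. Its differential at any point of $U$ then lands in the tangent space $T_{*}B\Aut(Y) = 0$, so Kodaira--Spencer vanishes identically on $U$. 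If $t_0 \in U$, this immediately contradicts the surjectivity of Kodaira--Spencer at $t_0$ onto $H^1(X, \Theta_X) \neq 0$, finishing the proof.

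The main obstacle I anticipate is handling the remaining case where $t_0 \notin U$, so that the iso class $[X]$ is a proper specialization of $[Y]$ in $|\mathcal{G}|$. Here a careful atlas-dimension computation on the smooth stack $\mathcal{G}$ (using that a smooth atlas has locally constant relative dimension, together with the partition of a connected atlas into strata indexed by iso class) shows that this degenerate case actually forces the generic fiber $Y$ to be infinitesimally rigid, i.e.\ $h^1(\Theta_Y) = 0$. One then invokes formal triviality of smooth families of rigid Fanos together with smoothness of $T$ to argue that the isotriviality on $U$ extends across $t_0$, forcing $X \cong Y$ and reducing us to the first case.
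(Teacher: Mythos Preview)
Your argument follows the same line as the paper's: construct a family with nonvanishing Kodaira--Spencer at a distinguished point, use the finiteness hypothesis to produce a dense open on which the family is fppf-locally trivial, and derive a contradiction from the vanishing of Kodaira--Spencer there. The paper works over a curve rather than a full versal base and simply replaces $T$ by a dense open without flagging the issue you raise, but the strategy is identical.

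The gap is in your handling of the case $t_0 \notin U$. Your claim that the generic fibre $Y$ satisfies $h^1(\Theta_Y) = 0$ is actually correct (the residual gerbe $B\Aut(Y)$ is open in the smooth stack $\mathcal{G}$, so its tangent space, which is zero, agrees with $H^1(\Theta_Y)$). But the next step fails: you want to extend the isotriviality on $U$ across $t_0$ using rigidity of $Y$ and smoothness of $T$, concluding $X \cong Y$. This does not follow. All you know is that $B\Aut(Y)$ is \emph{open} in $\mathcal{G}$; without separatedness of $\mathcal{G}$, a smooth family with rigid generic fibre $Y$ can have a non-isomorphic smooth Fano special fibre $X$, and ``formal triviality of rigid Fanos'' only constrains the formal neighbourhood of points of $U$ where the fibre is $Y$, not the formal neighbourhood of $t_0$ where the fibre is $X$.

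The clean fix dissolves the case distinction. By Kodaira--Akizuki--Nakano vanishing, every Fano $n$-fold $Z$ in characteristic zero has $H^2(Z, \Theta_Z) \cong H^2(Z, \Omega^{n-1}_Z \otimes \omega_Z^{-1}) = 0$. Cohomology and base change then makes $R^1\pi_*\Theta_{\mathcal{X}_T/T}$ locally free on all of $T$, so Kodaira--Spencer is a map of vector bundles and the locus where it is nonzero is open. Since this locus contains $t_0$ it meets the dense open $U$, giving the contradiction without any case analysis. This is also what justifies the paper's implicit assertion that nonvanishing of the Kodaira--Spencer map survives passage to a dense open of $T$.
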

\begin{proof} Note that $\mathrm{H}^1(X,\Theta_X) \otimes_k \bar{k} \cong \mathrm{H}^1(X_{\bar{k}}, \Theta_{X_{\bar{k}} } )$. Therefore, we may and do  assume that $k$ is algebraically closed.

We will prove the result using basic deformation theory. Assume for a contradiction that $\mathrm{H}^1(X,\Theta_X) \neq 0$.
As $k$ is of characteristic zero, $X$ is unobstructed (Lemma \ref{lem:smoothness}). Therefore, since the algebraic stack of Fano varieties satisfies Artin's axioms (and thus Axiom $(4)$ in \cite[Tag 07XJ]{stacks-project}), the non-triviality of $\mathrm{H}^1(X,\Theta_X)$ shows that there exist a smooth affine connected curve $T$ over $k$, a rational point $t\in T(k)$, and a smooth proper scheme $f: \mathcal X\to T$ whose fibre over $t$ is isomorphic to $X$, such that  the Kodaira-Spencer map 
\[ \Theta_{T} \to R^1 f_\ast \Theta_{\mathcal X/T}\] is non-zero. 
Due to the open nature of ampleness, on shrinking $T$, if necessary, we may further assume that $\mathcal{X}/T$ is a Fano scheme (Definition \ref{def:fano_scheme}). 

Note that, by Proposition \ref{prop:invariants}, all the fibres of $f$ have the same invariants as $X$. In particular, by our assumption on $n, \rho, r$ and $d$, there exists a Fano $n$-fold $Y$ over $k$ with Picard number $\rho$, index $r$ and degree $d$, and infinitely many closed points $p$ in $T$ such that $Y\cong \mathcal X_p$. Thus the morphism of schemes $\mathrm{Isom}_T(\mathcal X, Y \times_k T)\to T$ is dominant, as its image is an infinite (hence dense) subset of $T$.  (Here we use that a closed point $p$ with residue field $k(p) =k$ in $T$ lies in the image of $\mathrm{Isom}_T(\mathcal X, Y\times_k T)\to T$ if and only if $Y$ is $k$-isomorphic to $\mathcal X_p = \mathcal X \times_T \Spec k(p)$.)

Next, by Corollary \ref{cor:autom_groups_are_affine}, the morphism $\mathrm{Isom}_T(\mathcal X, Y\times_k T)\to T$ is of finite type. Thus by generic flatness and spreading out, replacing $T$ by a dense open if necessary, we may and do assume that $\mathrm{Isom}_T(\mathcal X, Y\times_k T)\to T$ is of finite type, flat and  dominant. Since flat finite type morphisms are open, replacing $T$ by a dense open if necessary, the morphism  $\mathrm{Isom}_T(\mathcal X, Y\times_k T)\to T$ is of finite type, flat and  surjective. 

As the Fano $n$-fold $\mathcal X\to T$ is trivial over the fppf cover $\mathrm{Isom}_T(\mathcal X, Y\times_k T)\to T$,   the Kodaira-Spencer map is zero.  
This contradiction proves the lemma.
\end{proof}

\begin{remark}
 Smooth Fano varieties $X$ over $\mathbb C$ with $\mathrm{H}^1(X,\Theta_X)~=~0$ may still admit
 non-trivial specialisations (see for example \cite[\S2.2]{PasPer}).
\end{remark}

The hypotheses of Lemma \ref{lem:vanishing_of_H1_abstract} hold for Fano threefolds of Picard number $1$, index $2$ and degree $5$, as we now explain.

\begin{lemma}\label{lem:classification_251}
Let $X$ and $Y$ be Fano threefolds over a field $k$. If $X$ and $Y$ have Picard number $1$, index $2$ and degree $5$, then  $X_{\bar k}$ and $Y_{\bar k}$ are isomorphic. 
\end{lemma}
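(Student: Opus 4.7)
The plan is to reduce the statement to the classical uniqueness of the Fano threefold $V_5$ over algebraically closed fields, as isolated in Section \ref{section:clas}.

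First I would base-change to the algebraic closure: since the conclusion only concerns $X_{\bar k}$ and $Y_{\bar k}$, and since the invariants of Picard number, index, and degree are defined geometrically (and thus unchanged by base change), I may assume that $k = \bar k$. Next, I would invoke the classification recalled in Section \ref{section:clas}: every Fano threefold over an algebraically closed field with Picard number $1$, index $2$, and degree $5$ is isomorphic to a smooth codimension-three linear section of the Grassmannian $\mathrm{Gr}(2,5) \subset \mathbb P^9$ in its Plücker embedding (the embedding being given by the complete linear system of a fundamental divisor, whose very ampleness and projective normality is part of the classification). Thus both $X$ and $Y$ arise in this way.

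The substantive remaining step is the classical assertion that any two smooth codimension-three linear sections of $\mathrm{Gr}(2,5) \subset \mathbb P^9$ are isomorphic as abstract varieties. This can be established by analysing the action of $\mathrm{PGL}_5$ on $\mathrm{Gr}(3, \mathrm{H}^0(\mathrm{Gr}(2,5), \mathcal O(1))^\vee) \cong \mathrm{Gr}(3,10)$ through the second exterior power representation, and verifying that the locus of codimension-three subspaces cutting $\mathrm{Gr}(2,5)$ in a smooth threefold consists of a single $\mathrm{PGL}_5$-orbit; a crude dimension count ($\dim \mathrm{PGL}_5 = 24 > 21 = \dim \mathrm{Gr}(3,10)$) is already consistent with this, and the full statement is classical \cite[Thm.~3.3.1]{IskPro}.

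The main obstacle is thus identifying and applying the correct form of this classical uniqueness result; once it is invoked, the conclusion that $X_{\bar k} \cong Y_{\bar k}$ is immediate. I would not attempt to reprove the geometric uniqueness of $V_5$ from scratch, but instead cite \cite{IskPro}. Note also that this lemma logically \emph{precedes} the rigidity statement $\mathrm{H}^1(X, \Theta_X) = 0$ which is obtained by feeding this lemma into Lemma \ref{lem:vanishing_of_H1_abstract}, so the argument cannot (and should not) rely on any infinitesimal rigidity input.
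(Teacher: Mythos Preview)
Your approach is essentially the same as the paper's: reduce to the algebraic closure and cite the classical uniqueness of $V_5$. However, there is one genuine gap concerning characteristic. The lemma is stated for an arbitrary field $k$, and the reference you invoke, \cite{IskPro}, works throughout over fields of characteristic zero. Your sketch of the $\mathrm{PGL}_5$-orbit argument is plausible in any characteristic, but you explicitly decline to carry it out and instead defer to \cite{IskPro}; this leaves the positive-characteristic case unproved. The paper handles this by splitting the citation: \cite[Cor.~3.4.2]{IskPro} for $\CC$ (hence characteristic zero by standard arguments), and Megyesi's classification \cite[Thm.~6.(iii)]{Megyesi} for the general case. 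You should do the same, or else verify that the orbit argument goes through unchanged in positive characteristic and cite an appropriate source (e.g.~\cite{Megyesi} or \cite{SB97}, which are the references underpinning the arbitrary-characteristic classification in Section~\ref{section:clas}).

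A second, more minor point: the classification in Section~\ref{section:clas} tells you that $X_{\bar k}$ is \emph{a} codimension-three linear section of $\mathrm{Gr}(2,5)$, but strictly speaking the uniqueness of this isomorphism class is not part of the table itself; it is the content of the result you then cite. So your two invocations of the classification and of \cite[Thm.~3.3.1]{IskPro} are really the same step, and the first is redundant.
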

\begin{proof}
	Over $\CC$ this is \cite[Cor~3.4.2]{IskPro}.  The general result follows from Megyesi's classification \cite[Thm.~6.(iii)]{Megyesi}. 
\end{proof}

From Lemma \ref{lem:vanishing_of_H1_abstract} and Lemma \ref{lem:classification_251} we immediately deduce the following.

\begin{corollary}\label{cor:vanishing_of_H1}  
 Let $X$ be a Fano threefold over a field $k$ of characteristic zero. If $\rho(X) = 1$, $\Index(X) =2$ and $\Degree(X) =5$, then $\mathrm{H}^1(X, \Theta_{X}) = 0$. 
\end{corollary}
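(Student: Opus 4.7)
The proof is essentially a direct combination of the two previously established lemmas, as flagged in the text just before the statement. Here is my plan.

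First I would verify that the hypotheses of Lemma \ref{lem:vanishing_of_H1_abstract} are met with $n = 3$, $\rho = 1$, $r = 2$, $d = 5$. The lemma requires that the set of $\bar{k}$-isomorphism classes of Fano threefolds with these invariants be finite. But Lemma \ref{lem:classification_251} gives something much stronger: any two such Fano threefolds become isomorphic after base change to $\bar{k}$. Hence there is a \emph{unique} $\bar{k}$-isomorphism class, which in particular is a finite set.

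Since $k$ is assumed to be of characteristic zero, Lemma \ref{lem:vanishing_of_H1_abstract} applies directly to the given $X$, yielding $\mathrm{H}^1(X, \Theta_X) = 0$ as required.

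There is essentially no obstacle here; the substantive content has already been absorbed into Lemma \ref{lem:vanishing_of_H1_abstract} (which runs a Kodaira--Spencer/fppf-descent argument using unobstructedness of Fano varieties in characteristic zero) and Lemma \ref{lem:classification_251} (Iskovskikh--Prokhorov/Megyesi's classification result that all such Fano threefolds are geometrically isomorphic). The only thing to check is the compatibility of the two statements, and that is immediate.
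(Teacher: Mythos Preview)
Your proposal is correct and matches the paper's approach exactly: the paper states the corollary as an immediate consequence of Lemma~\ref{lem:vanishing_of_H1_abstract} and Lemma~\ref{lem:classification_251}, and you have spelled out precisely why the hypotheses of the former are met via the latter.
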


We now use spreading out of smoothness and the boundedness of  the moduli of Fano threefolds with Picard number $1$, index $2$ and degree $5$ over $\mathbb Z$ to show that two such Fano threefolds over a suitable base are \'etale locally isomorphic.

\begin{lemma}\label{lem:isom_is_smooth}  There exists an integer $N\geq 1$ with the following property. Let $B$ be a scheme such that $N$ is invertible on $B$.
If $X$ and $Y$ are Fano threefolds  in $\mathcal F_{2,5}(B)$, then $\Isom_B(X,Y)\to B$ is   finite type, smooth and affine. 
\end{lemma}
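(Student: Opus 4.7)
The plan is to combine Corollary~\ref{cor:autom_groups_are_affine}, which already supplies affineness and finite typeness, with the vanishing $\mathrm{H}^1(X,\Theta_X)=0$ from Corollary~\ref{cor:vanishing_of_H1}; what remains is smoothness of $\Isom_B(X,Y)\to B$, which I would verify via the infinitesimal lifting criterion. The only subtlety is that Corollary~\ref{cor:vanishing_of_H1} is a characteristic-zero statement, so the role of $N$ will be to exclude the finitely many primes of positive characteristic where the vanishing might fail.

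To produce $N$, I would exploit the finite-typeness of the moduli. Lemma~\ref{lem:stack_Fid} provides a smooth atlas $U\to \mathcal{F}_{2,5}$ with $U$ a finite-type $\ZZ$-scheme; pulling back the tautological family gives a Fano threefold $\mathcal{X}\to U$. Since $\Theta_{\mathcal{X}/U}$ is locally free and $\mathcal{X}\to U$ is proper and flat, semi-continuity makes
$$Z \;:=\; \{u\in U : \mathrm{h}^1(\mathcal{X}_u,\Theta_{\mathcal{X}_u})>0\}$$
a closed subset of $U$. By Corollary~\ref{cor:vanishing_of_H1}, $Z$ is disjoint from $U_{\QQ}$, and since $U$ is of finite type over $\ZZ$, the image of $Z$ in $\Spec \ZZ$ is a closed set missing the generic point, hence contained in finitely many closed points. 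I would then take $N$ to be the product of the corresponding primes, so that for every $X\in \mathcal{F}_{2,5}(B)$ with $N$ invertible on $B$ one has $\mathrm{H}^1(X_b,\Theta_{X_b})=0$ for all $b\in B$.

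For smoothness I would then check the infinitesimal criterion. Let $T\hookrightarrow T'$ be a square-zero closed immersion of $B$-schemes with ideal $I$, and let $f\colon X_T \xrightarrow{\sim} Y_T$ be an isomorphism; I need to produce an extension $f'\colon X_{T'}\xrightarrow{\sim} Y_{T'}$. Transporting $Y_{T'}$ across $f$ yields a second deformation $f^{\ast}Y_{T'}$ of $X_T$ over $T'$, and standard deformation theory identifies the difference class of $X_{T'}$ and $f^{\ast}Y_{T'}$, in the torsor of deformations of $X_T$ over $T'$, with an element of $\mathrm{H}^1(X_T,\Theta_{X_T/T})\otimes I$. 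Step~1 forces this group to vanish fibrewise, hence globally, so $X_{T'}\cong f^{\ast}Y_{T'}$ as deformations of $X_T$; any such isomorphism restricts to the identity on the central fibre $X_T$ and therefore provides the desired lift $f'$.

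The principal obstacle is Step~1: Corollary~\ref{cor:vanishing_of_H1} ultimately relies on the unobstructedness of Fano varieties from Lemma~\ref{lem:smoothness}, which can fail in positive characteristic, so one cannot simply repeat that argument in mixed characteristic. The saving grace is finite-typeness of $\mathcal{F}_{2,5}$ over $\ZZ$ (Lemma~\ref{lem:stack_Fid}), which forces the locus of bad behaviour to be closed in a quasi-compact total space and therefore to be supported in finitely many primes — exactly what is needed to define $N$.
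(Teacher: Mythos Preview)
Your argument is correct and shares the paper's overall strategy—exploit the characteristic-zero vanishing $\mathrm{H}^1(X,\Theta_X)=0$ together with finite-typeness over $\ZZ$—but the execution differs. The paper works directly with the diagonal $\Delta:\mathcal{F}_{2,5}\to\mathcal{F}_{2,5}\times_\ZZ\mathcal{F}_{2,5}$: Corollary~\ref{cor:vanishing_of_H1} and \cite[Cor.~III.5.9]{SGA1} make $\Delta_\QQ$ formally smooth, hence smooth, and then one invokes spreading out of smoothness for finite-type morphisms of stacks \cite[Prop.~B.3]{Rydh2} to produce $N$; the Isom-schemes are then smooth by base change from $\Delta_{\ZZ[1/N]}$. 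You instead spread out the cohomological vanishing itself via semicontinuity on an atlas, and then verify the infinitesimal lifting criterion by hand. The paper's route is shorter and sidesteps any cohomology-and-base-change discussion for $\mathrm{H}^1(X_T,\Theta_{X_T/T}\otimes I)$, packaging smoothness as a single property of a morphism of stacks to spread out; your route is more elementary in that it never leaves the world of schemes and requires no spreading-out results for stacks. One small point: the deformation-theoretic difference class lives in $\mathrm{H}^1(X_T,\Theta_{X_T/T}\otimes I)$ rather than $\mathrm{H}^1(X_T,\Theta_{X_T/T})\otimes I$, but for the Artinian local $T$ appearing in the lifting criterion this is harmless after filtering $I$ by copies of the residue field.
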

\begin{proof}   Write $\mathcal M = \mathcal F_{2,5}$ for the stack of Fano threefolds with Picard number $1$, index $2$ and degree $5$ over $\mathbb Z$.  
Consider the diagonal $\Delta: \mathcal M \to \mathcal M \times_{\ZZ} \mathcal M$, and note that the stacks $\mathcal M$ and $\mathcal M \times_{\ZZ} \mathcal M$ are of finite type over $\ZZ$ (Lemma \ref{lem:stack_Fid}).  By Lemma \ref{lem:stack_Fid}, the representable morphism  $\Delta$ is  affine and of finite type.  Now, 
by Corollary \ref{cor:vanishing_of_H1} and \cite[Cor.~III.5.9]{SGA1} we find  that $\Delta_\QQ: \mathcal M_\QQ \to \mathcal M_\QQ \times_\QQ \mathcal M_\QQ$ is formally smooth, hence smooth. In particular, by applying spreading out of smoothness (\cite[Prop.~B.3.(xiii)]{Rydh2}) to the (finite type) diagonal morphism of $\mathcal M$, there exists an integer $N\geq 1$ such that $\Delta_{\ZZ[1/N]} : \mathcal M_{\ZZ[1/N]} \to \mathcal M_{\ZZ[1/N]} \times_{\ZZ[1/N]} \mathcal M_{\ZZ[1/N]} $ is smooth.  Hence for all schemes $B$ over $\ZZ[1/N]$ and for all $X$ and $Y$ in $\mathcal F_{2,5}(B)$, the morphism $\Isom_B(X,Y)\to B$, obtained via base-change from $\Delta_{\ZZ[1/N]}$, is  finite type, smooth and affine.
\end{proof}

\begin{lemma}\label{lem:locally_isom} There exists an integer $N\geq 1$ with the following property. Let $B$ be a scheme such that $N$ is invertible on $B$.
If $X$ and $Y$ are Fano threefolds in $\mathcal F_{2,5}(B)$, then $X$ and $Y$ are locally isomorphic over $B$ for the \'etale topology, i.e.~there exists an \'etale finite type surjective morphism $C\to B$ such that $X_C$ and $Y_C$ are isomorphic over $C$.
\end{lemma}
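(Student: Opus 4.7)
My plan is to combine the smoothness of the Isom-scheme from Lemma \ref{lem:isom_is_smooth} with the classification statement Lemma \ref{lem:classification_251}. First, I would choose $N$ as provided by Lemma \ref{lem:isom_is_smooth}, so that for any $B$ with $N$ invertible on $B$ and any $X, Y \in \mathcal{F}_{2,5}(B)$, the structure morphism $\pi : I \to B$ of the scheme $I := \Isom_B(X, Y)$ is smooth, affine, and of finite type.

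The key step will be to verify that $\pi$ is surjective. For any $b \in B$, I would pick a geometric point $\bar{b} = \Spec \overline{\kappa(b)} \to B$ above $b$. By Lemma \ref{lem:openness}, the fibers $X_{\bar{b}}$ and $Y_{\bar{b}}$ are Fano threefolds over $\overline{\kappa(b)}$ of Picard number $1$, index $2$, and degree $5$, and hence they are isomorphic by Lemma \ref{lem:classification_251}. This yields a $\overline{\kappa(b)}$-point of the fiber $I_{\bar{b}}$, so that $b$ lies in the image of $\pi$; thus $\pi$ is surjective.

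Finally, I would invoke the standard fact that a smooth surjective morphism of finite type between schemes admits étale-local sections: there exist a finite type, étale, surjective morphism $C \to B$ and a $B$-morphism $s : C \to I$. By the universal property of $I$, the section $s$ corresponds to an isomorphism $X_C \xrightarrow{\sim} Y_C$ over $C$, giving the desired étale-local isomorphism. I do not anticipate a serious obstacle here: surjectivity of $\pi$ reduces, via Lemma \ref{lem:openness}, directly to the classification result Lemma \ref{lem:classification_251}, and the existence of étale quasi-sections for smooth surjective morphisms is classical, following from the local structure of smooth morphisms (étale-locally a projection $\mathbb{A}^n_U \to U$, which admits the zero section). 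The only delicate point worth noting is that one must apply this after possibly enlarging $N$, but since all hypotheses used are already guaranteed on $\ZZ[1/N]$ by Lemma \ref{lem:isom_is_smooth}, no further shrinking of the base is needed.
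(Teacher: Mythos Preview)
Your proposal is correct and follows essentially the same approach as the paper: choose $N$ from Lemma~\ref{lem:isom_is_smooth}, observe that $\Isom_B(X,Y)\to B$ is smooth of finite type and has nonempty geometric fibres by Lemma~\ref{lem:classification_251}, hence is surjective, and conclude via the existence of \'etale-local sections for smooth surjective morphisms. The invocation of Lemma~\ref{lem:openness} is harmless but unnecessary, since membership in $\mathcal F_{2,5}(B)$ already forces every geometric fibre to have the required invariants by definition.
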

\begin{proof} Choose $N$ as in Lemma \ref{lem:isom_is_smooth}. Write $I:=\Isom_B(X,Y)$. Note that it suffices to show that $I\to B$ has a section locally for the \'etale topology on $B$.
 By Lemma \ref{lem:classification_251}, the morphism $ I\to B$ has non-empty geometric fibres, and is therefore surjective. By Lemma \ref{lem:isom_is_smooth},  the morphism $I\to B$ is smooth and of finite type. As smooth surjective finite type morphisms of schemes have sections locally for the \'etale topology, this concludes the proof.
\end{proof}

\subsection{Period maps of Fano threefolds}\label{section:periodmap}
We now study the period maps of certain Fano threefolds over $\CC$, in particular Torelli type problems (see \cite{Catanese} for a discussion of some Torelli type problems).
Due to a lack of global Torelli theorems, we will settle for infinitesimal Torelli theorems. Note that numerous classes of Fano threefolds over $\CC$ are known to \emph{not} satisfy the infinitesimal Torelli property; for instance,~Fano threefolds of Picard number $1$, index $1$ and degree $10$ \cite[Thm.~7.4]{DIM} or degree $14$ \cite[Thm.~5.8]{IM}.

\subsubsection{Infinitesimal Torelli}
We first show how to deduce the infinitesimal Torelli theorem in certain cases from known results in the literature, namely, for Fano threefolds with the following invariants. 
\begin{equation} \label{def:Tlist}
	\Tlist = \{(1,2), (1,6), (1,8), (2,1), (2,2), (2,3), (2,4)\}.
\end{equation}
Note that $\Tlist \subset \Slist$ in the notation of \eqref{def:Slist}.

\begin{proposition}\label{prop:inf_Torelli}
Let $X$ be a Fano threefold over $\CC$ with $\rho(X) = 1$ and   $(\Index(X),\Degree(X)) \in \Tlist$.
Then $X$ satisfies the infinitesimal Torelli property, i.e.~the morphism $$\mathrm{H}^1(X,\Theta_X) \to \Hom(\mathrm{H}^{2,1}(X),\mathrm{H}^{1,2}(X)) $$ is injective.
\end{proposition}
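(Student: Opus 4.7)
\emph{Plan.} The strategy is to use the classification of Section~\ref{section:clas} to realise each such $X$ explicitly, as either a smooth complete intersection in an ordinary projective space or a quasi-smooth hypersurface in a weighted projective space, and then to apply a Jacobian-ring description of the Hodge filtration following Griffiths and Steenbrink. In all these descriptions the primitive Hodge pieces $\mathrm{H}^{3-q,q}(X)_{\mathrm{prim}}$ and the deformation space $\mathrm{H}^1(X,\Theta_X)$ appear as graded pieces of the Jacobian ring $R$ of the defining equation(s), and the infinitesimal Torelli map is realised as a multiplication map inside $R$; injectivity is then a standard consequence of Macaulay's theorem, in the unweighted or weighted setting as appropriate.

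For the four cases $(r,d) \in \{(1,6),(1,8),(2,3),(2,4)\}$ the threefold $X$ is a smooth complete intersection in some $\PP^N$ whose primitive middle cohomology has Hodge level $1$, so these are immediately covered by the infinitesimal Torelli theorem for smooth complete intersections of Hodge level $1$ established in our earlier paper \cite{JL}. For the three remaining cases $(1,2),(2,1),(2,2)$, the threefold $X$ is realised respectively as a quasi-smooth hypersurface of degree $6$ in $\PP(3,1,1,1,1)$, degree $6$ in $\PP(3,2,1,1,1)$, and degree $4$ in $\PP(2,1,1,1,1)$. For each of these I would invoke Flenner's infinitesimal Torelli theorem for quasi-smooth weighted projective hypersurfaces, after verifying case-by-case that the required numerical conditions on the degree and the weights are satisfied and that $\mathrm{H}^3(X,\CC)$ is of Hodge level~$1$.

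The technically most delicate case is $(1,2)$, the sextic double solid, where the largest weight $3$ equals half the degree~$6$ and we sit on the boundary of the usual weighted Torelli hypotheses. Using the double-cover description $z^2 = f_6(x_0,\ldots,x_3)$, smoothness of $X$ corresponds to smoothness of the branch sextic $B\subset \PP^3$, and under the involution $z\mapsto -z$ one decomposes $\mathrm{H}^3(X,\CC)_{\mathrm{prim}}$ as the anti-invariant part, which is naturally related to $\mathrm{H}^2(B,\CC)_{\mathrm{prim}}$. Should Flenner's theorem not apply cleanly in this boundary situation, the fallback is to appeal to the infinitesimal Torelli theorem for double covers, which reduces injectivity to Macaulay's theorem for the ordinary Jacobian ring of $B\subset\PP^3$; this is the step I expect to be the main obstacle.
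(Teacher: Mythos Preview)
Your proposal is correct and follows essentially the same strategy as the paper: use the classification to identify each $X$ explicitly and then appeal to known infinitesimal Torelli theorems for the relevant class. The only difference is in the attribution of references. The paper invokes Flenner \cite[Thm.~3.1]{Fl86} for the ordinary complete intersection cases $(1,6),(1,8),(2,3),(2,4)$ (your citation of \cite{JL} ultimately rests on the same source), and for the three weighted hypersurface cases $(1,2),(2,1),(2,2)$ it cites Saito \cite[Thm.~7.3,~Thm.~7.6]{Sai86} rather than Flenner; in particular, the delicate boundary case $(1,2)$ that you flag is handled directly by Saito's \cite[Thm.~7.6]{Sai86}, so no fallback via the branch sextic is needed.
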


\begin{proof}  
Let $X$ be as in the statement of the proposition. If $(\Index(X),\Degree(X)) = (1,2), (2,1)$ or $(2,2)$, then $X$ is a certain hypersurface in some weighted projective space. The infinitesimal Torelli theorem in these cases follows from work of Saito, namely \cite[Thm.~7.3]{Sai86} and \cite[Thm.~7.6]{Sai86}. (Note that Saito, as many other authors, refers to   ``infinitesimal Torelli'' as   ``local Torelli''; see \cite[Def.~7.2]{Sai86}.) The remaining cases are certain smooth complete intersections in projective space, where the result follows   from Flenner's infinitesimal Torelli theorem \cite[Thm.~3.1]{Fl86}.
\end{proof}

\subsubsection{Quasi-finite Torelli for some Fano threefolds} A period map (which satisfies Griffiths transversality) on a quasi-projective complex manifold with injective differential has finite fibres. We record  this finiteness statement for the  Fano threefolds that interest us below (Theorem \ref{thm:qf_torelli}).
 
 To state the next result, for X a finite type $\mathbb C$-scheme, we let $X^{\an}$ be the associated complex analytic space \cite[Exp.~XII]{SGA1}.
  
\begin{theorem}\label{thm:qf_torelli}
Let $H$ be a polarised $\ZZ$-Hodge structure. Then the set of isomorphism classes of Fano threefolds $X$ over $\CC$ such that 
\begin{enumerate} 
\item $\rho(X) =1$, 
\item $(\Index(X), \Degree(X)) \in \Tlist$, and 
\item $H$ isomorphic to the polarised $\ZZ$-Hodge structure $\mathrm{H}^3(X^{\an},\ZZ)$
\end{enumerate} is finite.
\end{theorem}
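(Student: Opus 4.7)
The plan is to combine infinitesimal Torelli (Proposition~\ref{prop:inf_Torelli}) with the finite-type property of the moduli stack (Lemma~\ref{lem:stack_Fid}) to show that a suitable period map has finite fibers over $\CC$-points. First I observe that, since $\mathrm{H}^{3,0}(X)=0$ by Lemma~\ref{lem:vanishing}, the Hodge structure $\mathrm{H}^3(X^{\an},\ZZ)$ has Hodge level at most one, and its intermediate Jacobian
\[
J(X) := \mathrm{H}^3(X^{\an},\CC)\big/\bigl(F^2\mathrm{H}^3(X^{\an},\CC) + \mathrm{H}^3(X^{\an},\ZZ)\bigr)
\]
is a principally polarized abelian variety of dimension $g := h^{2,1}(X)$, an invariant depending only on $(\Index(X),\Degree(X))$. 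The polarized Hodge structure $H$ in the statement corresponds to a unique principally polarized abelian variety $A_H$ of dimension $g$, and the condition $H\cong \mathrm{H}^3(X^{\an},\ZZ)$ is equivalent to $J(X)\cong A_H$ as principally polarized abelian varieties.

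Next, I would construct an algebraic morphism of stacks over $\CC$
\[
\mathcal J\colon \mathcal F_{r,d,\CC} \to \mathcal A_g,\qquad X\mapsto J(X),
\]
using the algebraicity results of Achter--Casalaina-Martin--Vial \cite{ACM} promoted to a statement in families (which for these Fano threefolds of Hodge level one can be handled by relativizing the intermediate Jacobian construction, in the spirit of our earlier work \cite{JL}). The differential of $\mathcal J$ at a point $X$ is identified, via the polarization $\mathrm{H}^{2,1}(X)\cong \mathrm{H}^{1,2}(X)^\vee$ and the Hodge--Riemann bilinear relations, with the infinitesimal period map $\mathrm{H}^1(X,\Theta_X)\to \Hom(\mathrm{H}^{2,1}(X),\mathrm{H}^{1,2}(X))$, which is injective by Proposition~\ref{prop:inf_Torelli} precisely because $(\Index(X),\Degree(X))\in \Tlist$. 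Therefore $\mathcal J$ is unramified at every point; combined with Lemma~\ref{lem:stack_Fid} (which says the source is of finite type) and the fact that $\mathcal A_g$ is locally of finite type and separated, $\mathcal J$ is quasi-finite. In particular its fiber over every $\CC$-point of $\mathcal A_g$ is a zero-dimensional finite-type algebraic stack over $\CC$, and hence contains only finitely many isomorphism classes of $\CC$-points. The set in the statement is contained in $\mathcal J^{-1}([A_H])(\CC)$, which finishes the proof.

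The main difficulty lies in setting up the algebraic morphism $\mathcal J$ in families: \cite{ACM} establishes descent of the intermediate Jacobian to the field of definition pointwise, and promoting this to a morphism of algebraic stacks over $\CC$ requires an analysis of the relative intermediate Jacobian as an abelian scheme over a smooth base, together with a GAGA-style comparison between the classical analytic period map and its algebraic realization. Once this algebraic model is in place, the conclusion follows formally from infinitesimal Torelli and the basic fact that an unramified morphism of finite-type algebraic stacks over a field has finite fibers at every closed point.
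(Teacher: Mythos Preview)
Your approach is correct in outline but takes a genuinely different route from the paper. The paper works entirely on the analytic side: it pulls back the universal family to a smooth affine atlas $U\to\mathcal F_{r,d,\CC}$, considers the classical period map $p\colon U^{\an}\to\Gamma\backslash D$ to the relevant Griffiths period domain, passes (via Selberg's lemma) to a cover on which $\Gamma$ acts freely so that infinitesimal Torelli makes $p$ an immersion, and then invokes Griffiths's theorem that $p$ extends to a \emph{proper} map from a compactification $U'^{\an}\to\Gamma\backslash D$. Stein factorisation of this proper analytic map then shows that $p$ itself has finite fibres. Nowhere does the paper need to algebraize the period map or land in $\mathcal A_g$.

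By contrast, you exploit the special feature that these threefolds have Hodge level one, so the period target collapses to $\mathcal A_g$, and you aim for an \emph{algebraic} morphism $\mathcal J\colon\mathcal F_{r,d,\CC}\to\mathcal A_g$ whose unramifiedness (from infinitesimal Torelli) plus finite-typeness of the source yields finite fibres directly, with no compactification argument. This is conceptually cleaner once $\mathcal J$ is in hand, and indeed the paper's own Remark~\ref{remark:phil} sketches exactly this picture, noting that algebraicity follows from Borel's extension theorem. The trade-off is precisely the one you flag: the analytic approach needs only the standard period-map machinery and works for arbitrary period domains, while your approach front-loads the work into building $\mathcal J$ in families and identifying its differential with the Griffiths transversality map (most easily done by first constructing the analytic period map and then algebraizing via Borel, rather than via \cite{ACM} directly). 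Both arguments rest on the same infinitesimal Torelli input (Proposition~\ref{prop:inf_Torelli}) and the same finite-type property of the moduli stack.
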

\begin{proof}  Let $(r,d) \in \Tlist$ and let $X$ be a Fano threefold over $\CC$ with $\rho(X) =1$, $\Index(X) = r$ and $\Degree(X) =d$. By Lemma~\ref{lem:stack_Fid} and Lemma~\ref{lem:finite_isoms}, the algebraic stack $\mathcal F_{r,d,\CC}$ is smooth finite type  and Deligne-Mumford. In particular, there exist a smooth affine     $\CC$-scheme $U$ and an \'etale surjective morphism $U\to \mathcal F_{r,d,\CC}$ over $\CC$. Let $f:Y\to U$ be the pull-back of the universal family over $\mathcal F_{r,d,\CC}$. 

 Note that the set of $u$ in $U(\mathbb C)$ with $Y_u \cong X$ is finite and non-empty. Consider the period map $p:U^{\an}\to \Gamma\backslash D$ associated to $R^3f_\ast \ZZ$ and the choice of a base-point $u$ in $U(\CC)$. Here $D$ is the  period domain defined by the polarised $\ZZ$-Hodge structure $\mathrm{H}^3(Y_u, \ZZ)$ and $\Gamma$ is the monodromy group of the family of Fano threefolds $f:Y\to U$; see \cite[Sect.~4.3-4.4]{Mullerstach} or \cite[Ch.~10]{Voi07} for a detailed treatment of the construction of $D$ and $p$. 
To prove the theorem, it suffices to show that $p$ has finite fibres. 

By Selberg's lemma (see  \cite[Thm.~II]{Cassels} or \cite[Lem.~8]{Selberg}), replacing $U$ by an \'etale covering if necessary, we may and do assume that $\Gamma$ acts freely on $D$, so that, by Proposition \ref{prop:inf_Torelli}, the period map $p$ is an immersion. 

We now show that $p:U^{\an}\to \Gamma\backslash D$ has finite fibres, using a similar method to the proof of \cite[Thm.~2.8]{JL}. 
By \cite[p.~122]{G}   or \cite[Cor.~13.4.6]{Mullerstach}, there exist a smooth quasi-projective  scheme  $U^\prime$ over $\CC$, an open immersion $U\to U^\prime$ of schemes, and a proper morphism of complex analytic spaces $p^ \prime: U^{\prime,\an}\to D/\Gamma$ which extends the period map $p:U^{\an}\to D/\Gamma$. An application of Stein factorisation for proper morphisms of complex analytic spaces \cite[Ch.~10.6.1]{GrauertRemmert} shows that there exist a proper surjective morphism  $p_0:U^{\prime,\an}\to U_0$ with connected fibres, and a finite morphism  $U_0\to D/\Gamma$ such that $p^\prime$ factorises as \[\xymatrix{ U^{\prime,\an} \ar[rr]^{ p^\prime} \ar[dr]_{p_0} & & D/\Gamma \\ & U_0. \ar[ur] & } \]  
Since the restriction of $p^\prime$ to $U^{\an}$ is an immersion, it follows that $p_0$ is an isomorphism
when restricted to $U^{\an}$. In particular, the period map $p$ factors as $U^{\an} \subset U_0 \to  D/\Gamma$,
hence has finite fibres.  
 \end{proof}

\section{Good reduction}\label{section:finiteness}

\subsection{Fano threefolds}

In this section we prove Theorems \ref{theorem1}, \ref{theorem2} and \ref{theorem3}. 
We begin with the following definition.

\begin{definition} \label{def:smooth_reduction}
Let $B$ be an integral noetherian scheme with function field $K$ and let $X$ be
 a proper variety over $K$. A \emph{model for $X$ over $B$} is a flat proper $B$-scheme
 $\mathcal X\to B$ together with a choice of isomorphism $\mathcal{X}_K \cong X$.
 We say that
 \begin{enumerate}
\item 	 $X$ has \textit{smooth reduction at a point $v$ of $B$} if $X$ has a smooth model over the localisation
	$B_v$ of $B$ at $v$.
\item  $X$ has \textit{smooth reduction over $B$} if
	$X$ has smooth reduction at all points of codimension one of $B$.
 \end{enumerate} 
\end{definition}

Our first result is key, and shows the finiteness of the set of ``periods'' (i.e.~integral polarised Hodge structures) of Fano threefolds with smooth reduction. It makes use of the theory of the intermediate Jacobian over non-algebraically closed fields, due to Achter--Casalaina-Martin--Vial \cite{ACM}, together with Faltings's finiteness results for abelian varieties \cite{Faltings2, FaltingsComplements, Szpiroa}. 
 
\begin{proposition}\label{prop:finiteness_of_hs} Let $B$ be an arithmetic scheme with function field $K$. Let $\sigma:K\to \mathbb C$ be an embedding.
Then the set of isomorphism classes of polarised $\mathbb Z$-Hodge structures $\mathrm{H}^3(X_{\mathbb C},\mathbb Z)$, where $X$ runs over all
Fano threefolds over $K$ with smooth reduction over $B$, is finite.
\end{proposition}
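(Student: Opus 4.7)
Since $X$ is Fano, $\mathrm{H}^{3,0}(X_{\CC}) = \mathrm{H}^0(X_{\CC}, \omega_{X_{\CC}}) = 0$, so the polarised $\ZZ$-Hodge structure $H := \mathrm{H}^3(X_{\CC}, \ZZ)$ has weight $3$ and level at most $1$. The intermediate Jacobian $J^2(X_{\CC}) := H_{\CC}/(F^2 H_{\CC} + H)$ is then a principally polarised abelian variety over $\CC$, and (modulo the torsion subgroup of $H$, which is uniformly bounded by the boundedness of smooth Fano threefolds) the assignment $H \mapsto J^2(X_{\CC})$ is an equivalence between polarised $\ZZ$-Hodge structures of weight $3$ and level at most $1$ and principally polarised abelian varieties over $\CC$. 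So it is enough to bound the set of $\CC$-isomorphism classes of principally polarised abelian varieties of the form $J^2(X_{\CC})$, as $X$ ranges over the Fano threefolds in the statement.

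To descend the intermediate Jacobian to $K$, I would apply the main theorem of Achter--Casalaina-Martin--Vial \cite{ACM}, which produces for each such $X$ a principally polarised abelian variety $J(X)$ over $K$ together with a canonical isomorphism $\sigma^{*} J(X) \cong J^2(X_{\CC})$. The key intermediate step is then to show that $J(X)$ has good reduction over $B$: for any codimension-one point $v \in B$ and any prime $\ell$ invertible in $\kappa(v)$, smooth and proper base change applied to a smooth proper model of $X$ over $\OO_{B,v}$ will show that the $\Gal(\bar K/K)$-representation $\mathrm{H}^{3}_{\et}(X_{\bar K}, \QQ_\ell)$ is unramified at $v$. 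Since the ACM construction is compatible with $\ell$-adic \'etale cohomology — identifying the rational Tate module of $J(X)$ with a Galois-equivariant Tate twist of $\mathrm{H}^{3}_{\et}(X_{\bar K}, \QQ_\ell)$ — the N\'eron--Ogg--Shafarevich criterion will then yield good reduction of $J(X)$ at $v$.

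Finally, the boundedness of smooth Fano threefolds (Koll\'ar--Miyaoka--Mori) implies that the third Betti number $b_3(X)$, and hence the dimension $g = b_3(X)/2$ of $J(X)$, is uniformly bounded. Faltings's finiteness theorem \cite{Faltings2} then gives only finitely many $K$-isomorphism classes of principally polarised abelian varieties over $K$ of dimension at most $g$ with good reduction over $B$, and base-changing via $\sigma$ will yield the desired finiteness. The hardest step, in my view, will be the middle one: deducing good reduction of $J(X)$ from smooth reduction of $X$ depends on a precise Galois-equivariant identification of the rational Tate module of $J(X)$ with the weight-three $\ell$-adic cohomology of $X$, which must be extracted carefully from the ACM construction.
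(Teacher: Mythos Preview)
Your proposal is correct and follows essentially the same strategy as the paper: descend the intermediate Jacobian to $K$ via Achter--Casalaina-Martin--Vial, use the $\ell$-adic compatibility of that construction together with smooth-proper base change and N\'eron--Ogg--Shafarevich to get good reduction of $J(X)$, bound its dimension via boundedness of Fano threefolds, and conclude by Faltings. The paper's proof is slightly more terse---it simply records the two compatibilities $\mathrm{H}^1(J(X)_{\bar K},\QQ_\ell)\cong \mathrm{H}^3(X_{\bar K},\QQ_\ell(1))$ and $\mathrm{H}^1(J(X_{\CC}),\ZZ)\cong \mathrm{H}^3(X_{\CC},\ZZ(1))$ as consequences of \cite{ACM} and proceeds exactly as you do---so the step you flag as hardest is handled by direct citation rather than re-derivation.
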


\begin{proof} 
For $X$ a  Fano threefold over $K$, we let $J(X)$ be its intermediate Jacobian over $K$. The existence of this principally polarised abelian variety over $K$ is a special case of the main result of \cite{ACM} (cf.~\cite[Cor.~5.6]{ACM}). The intermediate Jacobian $J(X)$ has the following cohomological properties.
\begin{enumerate}
 \item $\mathrm{H}^1(J(X)_{\bar{K}},\QQ_\ell) \cong \mathrm{H}^3(X_{\bar{K}},\QQ_\ell(1))$ as $\Gal(\bar{K}/K)$-representations. \label{eqn:QQ_ell}
 \item $\mathrm{H}^1(J(X_{\CC}),\ZZ) \cong \mathrm{H}^3(X_{\CC}, \ZZ(1))$ as polarised $\ZZ$-Hodge structure. 
 \label{eqn:ZZ}
\end{enumerate}

As there are only finitely many deformation types of complex algebraic Fano threefolds (see  \cite[\S 5]{Deb97}, \cite[Thm.~5.19]{Debarrebook} or \cite{IskPro}), the intermediate Jacobian $J(X)$  has bounded dimension. Moreover, if $X$ has smooth reduction over $B$, then property \eqref{eqn:QQ_ell} together with the N\'eron-Ogg-Shafarevich criterion  \cite{SerreTate} shows that $J(X)$ also has smooth reduction over $B$.  
Thus, by Faltings's finiteness theorem \cite{FaltingsComplements}, the set of $K$-isomorphism classes of intermediate Jacobians $J(X)$,  where $X$ runs over all Fano threefolds   over $K$ with smooth reduction over $B$, is finite. In particular, the set of isomorphism classes of polarised integral Hodge structures   $\mathrm{H}^1(J(X_{\CC}),\ZZ)$, for such Fano threefolds $X$, is finite. The result then follows from property \eqref{eqn:ZZ}.
\end{proof}

 \begin{definition} \label{def:good_reduction}
 Let $B$ be an integral noetherian scheme with function field $K$. Let $X$ be a Fano variety over $K$. A \emph{good model for $X$ over $B$} is a Fano scheme $\mathcal X\to B$ together with a choice of isomorphism $\mathcal{X}_K \cong X$. We say that $X$ has \emph{good reduction over $B$} if, for all $v $ in $B^{(1)}$, the Fano variety $X$ has a good model over $\Spec \mathcal O_{B,v}$.
 \end{definition}

\begin{lemma}\label{lem:good_is_good1} Let $B$ be an integral noetherian scheme with function field $K$. Let $X$ be a Fano variety over $K$. Then $X$ has good reduction over $B$ if and only if there exists a dense open subscheme $U\subset X$ with
$X\setminus U$ of codimension at least two such that $X$ has a good model over $U$.
\end{lemma}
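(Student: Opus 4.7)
The $(\Leftarrow)$ direction is immediate: any codimension-one point $v \in B$ must lie in $U$ (else $B \setminus U$ would contain a codimension-one point), so the base change $\mathcal X \times_U \Spec \mathcal O_{B, v} \to \Spec \mathcal O_{B, v}$ is a good model of $X$ over $\Spec \mathcal O_{B, v}$.

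For $(\Rightarrow)$, the plan is a spread-out-and-glue argument. By standard limit arguments applied to the algebraic stack $\textsc{Fano}$, which is locally of finite type over $\ZZ$ with affine diagonal (Lemma \ref{lem:algebraic}), each local good model $\mathcal X_v \to \Spec \mathcal O_{B,v}$ extends to a Fano scheme $\mathcal X^{(v)} \to V_v$ over some open neighbourhood $V_v$ of $v$ in $B$: one uses that being smooth, proper, and having ample relative anticanonical bundle are all open conditions in families. Setting $U := \bigcup_{v \in B^{(1)}} V_v$ produces an open subset of $B$ containing every codimension-one point, so $B \setminus U$ has codimension at least two.

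The next step is to glue the local models $\mathcal X^{(v)}$ into a single Fano scheme $\mathcal X \to U$. On each overlap $V_v \cap V_{v'}$, both $\mathcal X^{(v)}$ and $\mathcal X^{(v')}$ are Fano schemes with the same generic fibre $X$, and by Corollary \ref{cor:autom_groups_are_affine} the scheme $I_{v,v'} := \Isom_{V_v \cap V_{v'}}(\mathcal X^{(v)}, \mathcal X^{(v')})$ is affine and of finite type, with an identity $K$-point. A limit argument extends this $K$-section to a section over some open $W_{v,v'} \subset V_v \cap V_{v'}$ that still contains the generic point of $B$. After shrinking each $V_v$ (while preserving $v \in V_v$) so that the pairwise overlaps lie in the respective $W_{v,v'}$, the local pieces glue via the induced isomorphisms into a single Fano scheme over the new union; because codimension-one points are generic points of codimension-one closed subsets, the shrinking can be arranged not to remove any of them, so the complement still has codimension at least two.

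The main obstacle is precisely the gluing step: one must verify that the induced transition isomorphisms satisfy the cocycle condition on triple overlaps, and that the required shrinking can be done uniformly without excising any codimension-one point from $U$. In the cases of interest in this paper (in particular when $(r,d) \in \Slist$), the uniqueness of good models granted by Lemma \ref{lem:unicity} renders the gluing data canonical and the cocycle condition automatic; in full generality, one must instead exploit finer spreading-out properties of the finite-type $\Isom$ schemes together with the noetherianity of $B$ to carry the gluing through.
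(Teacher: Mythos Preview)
Your spread-out-and-glue approach is the ``standard argument'' the paper invokes, and $(\Leftarrow)$ is fine. The gap is in $(\Rightarrow)$: by indexing over \emph{all} $v\in B^{(1)}$ you create an infinite gluing problem, and the shrinking step then asks each $V_v$ to satisfy infinitely many conditions $V_v\cap V_{v'}\subset W_{v,v'}$. Each such condition removes a closed subset from $V_v$; an infinite union of closed sets need not be closed, so the resulting ``shrunk'' $V_v$ has no reason to be open or to contain $v$. Invoking Lemma~\ref{lem:unicity} does not close this gap, since that lemma applies only to regular bases and to the specific $(r,d)\in\Slist$, whereas the present statement is for arbitrary Fano varieties over an arbitrary integral noetherian $B$. (Incidentally, the cocycle condition you flag is \emph{not} an obstacle: $\Isom$-schemes of Fano schemes are affine, hence separated, over the base by Corollary~\ref{cor:autom_groups_are_affine}, so two sections agreeing at the dense generic point agree on the whole overlap, and the cocycle identity is automatic once the pairwise transition maps exist.)

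The missing idea is to make the gluing \emph{finite} from the outset. First spread out $X$ itself from $\Spec K$ to a Fano scheme $\mathcal X_0$ over a single dense open $U_0\subset B$. Then $B\setminus U_0$ is closed in a noetherian scheme, hence has only finitely many irreducible components, and only finitely many of their generic points $\eta_1,\dots,\eta_m$ lie in $B^{(1)}$. For each $\eta_i$ one spreads out the given local model to an open $V_i\ni\eta_i$ and glues it to the model already built; now only a single shrinking is needed at each stage, and one checks directly that the closure of $(U_0\cap V_i)\setminus W_{0i}$ cannot contain $\eta_i$ (any proper generalization of the codimension-one point $\eta_i$ is the generic point of $B$, which lies in $W_{0i}$). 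After $m$ steps one has a good model over an open whose complement has codimension $\geq 2$.
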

\begin{proof}
This follows from a standard argument (see e.g.~\cite[Prop.~1.4.1]{BLR}), by gluing models over suitable open neighbourhoods of $B$ along their generic fibre.
\end{proof}

Over Dedekind schemes there is no difference between good reduction and having a good model.

\begin{lemma}\label{lem:good_is_good} Let $B$ be a Dedekind scheme with function field $K$. Let $X$ be a Fano variety over $K$. Then $X$ has good reduction over $B$ if and only if $X$ has a good model over $B$.
\end{lemma}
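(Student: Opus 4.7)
The approach is to derive this as an immediate corollary of Lemma \ref{lem:good_is_good1}. The ``if'' direction is unconditional: given a good model $\mathcal{X}\to B$, for each $v\in B^{(1)}$ the base change $\mathcal{X}\times_B \Spec \mathcal{O}_{B,v}\to \Spec\mathcal{O}_{B,v}$ is a good model over $\Spec\mathcal{O}_{B,v}$, so $X$ has good reduction over $B$ in the sense of Definition \ref{def:good_reduction}.

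For the ``only if'' direction, the plan is to invoke Lemma \ref{lem:good_is_good1} and use the one-dimensionality of $B$ to upgrade the good model it produces on a ``big'' open to a good model on all of $B$. Explicitly, Lemma \ref{lem:good_is_good1} will furnish a dense open $U\subset B$ such that $B\setminus U$ has codimension at least two and such that $X$ has a good model over $U$. Since $B$ is Dedekind, i.e.\ an integral noetherian normal one-dimensional scheme, every nonempty closed subset of $B$ contains a codimension-one point; hence the only closed subset of $B$ of codimension $\geq 2$ is the empty set. Therefore $U=B$, and the good model provided by Lemma \ref{lem:good_is_good1} is already defined over all of $B$, as required.

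The main content — that local good models at each codimension-one point glue to a model over a big open — is entirely absorbed into Lemma \ref{lem:good_is_good1}, whose proof uses the standard spreading-out-and-gluing argument (cf.\ \cite[Prop.~1.4.1]{BLR}): each local good model over $\Spec\mathcal{O}_{B,v}$ spreads to an open neighborhood of $v$, and these spreads are glued along their common generic fiber $X$ after shrinking. In the Dedekind case this gluing trivially covers all of $B$ for the dimensional reason above, so I do not anticipate any obstacle beyond this short translation.
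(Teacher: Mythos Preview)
Your proof is correct and is exactly the paper's approach: the paper's proof consists solely of the sentence ``This follows from Lemma \ref{lem:good_is_good1},'' and you have simply spelled out the one-line reason why, namely that on a one-dimensional scheme a closed subset of codimension at least two is empty, so the open $U$ furnished by Lemma \ref{lem:good_is_good1} is all of $B$.
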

\begin{proof} This follows from Lemma \ref{lem:good_is_good1}.
 \end{proof}

\subsubsection{Twists}
We now prove some properties about good reduction and twists.

\begin{definition}
Let $X$ be a Fano variety over a field $k$. A Fano variety $Y$ over $k$ is a \emph{twist of $X$ over $k$} if $X_{\bar k}$ is isomorphic to $Y_{\bar k}$ with respect to some algebraic closure $k\to \bar{k}$.
\end{definition} 

Our next finiteness result uses the finiteness result of Hermite-Minkowski and the separatedness of certain    stacks of Fano threefolds (Section \ref{section:separatedness}). Analogous finiteness results have been proven for varieties with ample canonical bundle \cite[Lem.~4.1]{Jav15} and complete intersections \cite[Thm.~4.10]{JL}. Recall the definition of $\Slist$ given in \eqref{def:Slist}.

\begin{proposition}\label{prop:twists} Let $B$ be an arithmetic scheme with function field $K$. Let $X$ be a Fano threefold over $K$ with $\rho(X) =1$ and  $(\Index(X_K),\Degree(X_K)) \in \Slist$. Then the set of $K$-isomorphism classes of Fano threefolds $Y$ over $K$ which are twists of $X$ and have good reduction over $B$ is finite.
\end{proposition}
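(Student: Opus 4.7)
The plan is to interpret twists of $X$ with good reduction as torsors under a finite étale group scheme of bounded order over $B$, and then conclude via a Hermite--Minkowski type finiteness for étale cohomology on arithmetic schemes. To set up, I would first enlarge $S$ (i.e.~shrink $B$) so that the integer $N$ produced jointly by Lemma \ref{lem:finite_isoms} and Lemma \ref{lem:unicity} is invertible on $B$. Using the hypothesis that $X$ has good reduction over $B$ together with Lemma \ref{lem:good_is_good1}, I may, after replacing $B$ by a dense open $U$ with $B \setminus U$ of codimension at least two, assume that $X$ extends to a good model $\mathcal X \to U$. Any twist $Y$ with good reduction over $B$ then also extends to a good model $\mathcal Y \to U$, and Lemma \ref{lem:unicity} ensures that $\mathcal Y$ is uniquely determined by $Y$ up to unique isomorphism. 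In particular the assignment $Y \mapsto \mathcal Y$ is injective on $K$-isomorphism classes.

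Next I would form the $U$-group scheme $\mathcal G := \underline{\Aut}_U(\mathcal X)$. By Lemma \ref{lem:finite_isoms} applied with $Y = X$, the morphism $\mathcal G \to U$ is finite, and since its generic fibre lives in characteristic zero it is automatically étale there. After inverting finitely many additional primes I may therefore assume $\mathcal G \to U$ is finite étale. For each twist $Y$ with good reduction over $B$, the $U$-scheme $\underline{\Isom}_U(\mathcal X, \mathcal Y)$ is a right $\mathcal G$-torsor (trivialised over $\bar K$), and the standard twist--torsor correspondence tells us that its class in $H^1_{\et}(U, \mathcal G)$ determines $\mathcal Y$ up to isomorphism. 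Combined with the injectivity established in the previous paragraph, this produces an injection
\[
   \{Y: Y\text{ is a twist of }X\text{ with good reduction over }B\}/\cong \; \hookrightarrow \; H^1_{\et}(U, \mathcal G).
\]

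It therefore suffices to prove that $H^1_{\et}(U, \mathcal G)$ is finite. A class in $H^1_{\et}(U, \mathcal G)$ gives rise to a finite étale cover of $U$ of the fixed degree $|\mathcal G|$, and the arithmetic-scheme analogue of the Hermite--Minkowski theorem bounds the number of such covers up to isomorphism (reducing along a regular curve through $U$ to classical Hermite--Minkowski for rings of $S$-integers). A parallel argument bounds the number of inner forms of $\mathcal G$ that can arise, yielding the desired finiteness. The main obstacle in this plan is really the first step: producing a global good model $\mathcal Y$ for every twist $Y$ and knowing it is canonically unique. Both existence (via gluing across codimension-one points) and uniqueness rest on the separatedness result Lemma \ref{lem:separatedness}, which is precisely where the hypothesis $(r(X),d(X)) \in \Slist$ enters in an essential way; everything after Step 1 is then a routine unwinding of Galois cohomology and finiteness of finite étale covers.
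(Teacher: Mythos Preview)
Your overall strategy matches the paper's: reduce to classes in a nonabelian $H^1$ of the automorphism group scheme, then invoke a Hermite--Minkowski type finiteness. However, there is a genuine gap in the step you yourself flag as the main obstacle. You assert that ``any twist $Y$ with good reduction over $B$ then also extends to a good model $\mathcal Y \to U$'', where $U$ is the open chosen for $X$. Lemma~\ref{lem:good_is_good1} only gives, for each such $Y$, a good model over \emph{some} open $U_Y$ with $B\setminus U_Y$ of codimension at least two, and there is no reason the $U_Y$ coincide with your fixed $U$. When $\dim B = 1$ this is harmless (then $U_Y=B$ by Lemma~\ref{lem:good_is_good}), but the proposition is stated for an arbitrary arithmetic scheme. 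To salvage your argument in higher dimension you would need Zariski--Nagata purity: first check that $\underline{\Isom}_{U\cap U_Y}(\mathcal X,\mathcal Y)$ is a $\mathcal G$-torsor, then extend it across the codimension-$\geq 2$ complement, and only then recover a global model of $Y$ over $U$ by twisting $\mathcal X$. As written, the argument is circular, since you appeal to the existence of $\mathcal Y$ over $U$ before constructing the torsor that would produce it. A related omission: you assert $\underline{\Isom}_U(\mathcal X,\mathcal Y)$ is a torsor because it is ``trivialised over $\bar K$'', but triviality at the generic point does not give fppf-local triviality over $U$; one must produce a finite flat cover over which the two models become isomorphic, and this is where Lemma~\ref{lem:unicity} is really used. (Also, the proposition does not assume $X$ itself has good reduction over $B$; one simply shrinks $B$ so that $X$ spreads out.)

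The paper avoids the first issue entirely by never asking for a global model of $Y$. After shrinking $B$ so that $\mathcal X$ exists and $\Aut_B(\mathcal X)$ is finite \'etale, it works \emph{locally} at each $v\in B^{(1)}$: taking any local good model $\mathcal Y_v$ over $\Spec\mathcal O_{B,v}$, it shows via Lemma~\ref{lem:unicity} (applied over the normalization of $\mathcal O_{B,v}$ in a splitting field $L$) that $\Isom_{B_v}(\mathcal X_v,\mathcal Y_v)$ is an honest $\Aut_{B_v}(\mathcal X_v)$-torsor. Hence the class $[Y]\in H^1(K,\Aut_K X)$ lies in
\[
\bigcap_{v\in B^{(1)}}\mathrm{im}\bigl(H^1(B_v,\Aut_{B_v}\mathcal X_v)\to H^1(K,\Aut_K X)\bigr),
\]
and this intersection is finite by the Hermite--Minkowski lemma for arithmetic schemes \cite[Lem.~4.5]{JL}. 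This local formulation sidesteps both the global-model problem and any purity input.
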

\begin{proof}  
In what follows, we are allowed to shrink $B$ if necessary, i.e.~replace $B$ by a dense open subscheme. Thus, choosing $N\geq 1$ as in Lemma \ref{lem:finite_isoms} and Lemma \ref{lem:unicity}, we may assume that $N$ is invertible on $B$.  Further shrinking $B$, if necessary, we may assume that $X$ has a good model over $B$, say $\mathcal X\to B$. Again shrinking $B$, we may assume that $\Aut_B(\mathcal X)$ is finite \'etale over $B$. Indeed, the automorphism group scheme $\Aut_K(X)$ of $X$ is finite \'etale over $K$ by our assumptions on the index and degree of $X$  (Lemma \ref{lem:finite_isoms}), and $\Aut_B(\mathcal X)\to B$ is of finite type over  $B$ (Corollary \ref{cor:autom_groups_are_affine}), so that we may apply spreading out for finite \'etale morphisms. 

Let $Y$ be a twist of $X$ over $K$ with a good reduction over $B$. For each $v \in B^{(1)}$, let $\mathcal Y_v$ be a good model for $Y$ over $B_v = \Spec \OO_{B,v}$. Note that Lemma \ref{lem:openness} implies  that $\mathcal Y_v$ has Picard number $1$, index $\Index(X)$ and degree $\Degree(X)$.  In particular, we deduce from Lemma \ref{lem:unicity} the following: Let $L/K$ be a finite field extension such that $X_L$ and $Y_L$ are isomorphic over $L$. Then for all $v\in B^{(1)}$ we have $\mathcal X \times_{B_v} C_v \cong \mathcal Y_v\times_{B_v} C_v$, where $C_v\to B_v$ is the normalization of $B_v$ in $L$. Since $C_v\to B_v$ is finite flat surjective and the affine finite type scheme $\Isom_{B_v}(X_v,Y_v)$ has a $C_{v}$-point,   we conclude that  $\Isom_{B_v}(X_v,Y_v)$ is an $\Aut_{B_v}(\mathcal X_v)$-torsor over $B_v$.
Hence the class $[Y]$ of $Y$ in $\mathrm{H}^1(K,\Aut_K(X))$  lies in the subset
$$ 
\bigcap_{v \in B^{(1)}}\im\left( \mathrm{H}^1(B_v,\Aut_{B_v}(\mathcal X_v)) \to \mathrm{H}^1(K, \Aut_K(X)) \right).$$
That this set is finite now follows from Hermite-Minkowski for arithmetic schemes (see \cite[Lem.~4.5]{JL}). The result is proved. 
\end{proof}

\subsubsection{Finiteness results}

Recall the definition of $\Tlist$ given in \eqref{def:Tlist}.

\begin{theorem}\label{thm:finiteness_for_fanos}
Let $B$ be an arithmetic scheme with function field $K$. The set of $K$-isomorphism classes of Fano threefolds $X$ over $K$ with Picard number $1$, good reduction over $B$ and $(\Index(X),\Degree(X) ) \in \Tlist $ is finite.
\end{theorem}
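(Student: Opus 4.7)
The plan is to combine three ingredients already established in the paper: the finiteness of polarised integral Hodge structures arising from Fano threefolds with smooth reduction (Proposition \ref{prop:finiteness_of_hs}), the quasi-finite Torelli statement for invariants in $\Tlist$ (Theorem \ref{thm:qf_torelli}), and the finiteness of twists with good reduction (Proposition \ref{prop:twists}, which applies because $\Tlist \subset \Slist$). Since $\Tlist$ is finite, it suffices to fix a pair $(r,d) \in \Tlist$ and prove finiteness for that invariant separately.

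First I would fix an embedding $\sigma : K \hookrightarrow \CC$, and note that good reduction over $B$ implies smooth reduction over $B$ (a good model is in particular a smooth model). Proposition \ref{prop:finiteness_of_hs} then gives finitely many isomorphism classes of polarised $\ZZ$-Hodge structures $\mathrm{H}^3(X_\CC^{\an},\ZZ)$ as $X$ ranges over Fano threefolds over $K$ of Picard number $1$, index $r$ and degree $d$, with good reduction over $B$. Second, for each such Hodge structure $H$, Theorem \ref{thm:qf_torelli} produces only finitely many isomorphism classes of Fano threefolds over $\CC$ with invariants $(r,d)$ and third cohomology isomorphic to $H$. A standard descent step transfers this finiteness to $\bar K$: given Fano threefolds $X,Y$ over $\bar K$ with $X_\CC \cong Y_\CC$, the scheme $\Isom_{\bar K}(X,Y)$ is of finite type over $\bar K$ by Corollary \ref{cor:autom_groups_are_affine} and possesses a $\CC$-point, hence, since $\bar K$ is algebraically closed, a $\bar K$-point. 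Thus there are only finitely many $\bar K$-isomorphism classes of Fano threefolds of the required type admitting a form over $K$ with good reduction over $B$.

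Finally, Proposition \ref{prop:twists} bounds, for each such $\bar K$-isomorphism class, the number of $K$-isomorphism classes of twists that have good reduction over $B$. Chaining the three bounds together yields the theorem. In this synthesis itself there is no substantial obstacle: the deep inputs are already isolated in the earlier sections, namely Faltings's theorem for abelian varieties applied to intermediate Jacobians (used for Proposition \ref{prop:finiteness_of_hs}) and the combination of infinitesimal Torelli with the properness of the extended period map (used for Theorem \ref{thm:qf_torelli}). The only care needed is to verify that the descent from $\CC$ to $\bar K$, and then via Proposition \ref{prop:twists} from $\bar K$ down to $K$, is correctly matched to the hypotheses of the input propositions, and in particular that each step respects the class $\Tlist$.
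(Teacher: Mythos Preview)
Your proposal is correct and follows essentially the same route as the paper's proof: fix an embedding $K\hookrightarrow\CC$, combine Proposition~\ref{prop:finiteness_of_hs} with Theorem~\ref{thm:qf_torelli} to obtain finitely many $\CC$-isomorphism classes, and then apply Proposition~\ref{prop:twists}. Your explicit descent step from $\CC$ to $\bar K$ via the finite-type Isom-scheme is a welcome clarification of a point the paper leaves implicit (since twists in Proposition~\ref{prop:twists} are defined over $\bar K$, not $\CC$).
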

\begin{proof} Fix an embedding $K\to \CC$. By the  finiteness of periods (Proposition \ref{prop:finiteness_of_hs}) and the quasi-finite Torelli  (Theorem \ref{thm:qf_torelli}),  the set of $\CC$-isomorphism classes of Fano threefolds $X$ over $K$ with Picard number $1$, good reduction over $B$ and $(\Index(X),\Degree(X) ) \in  \Tlist$ is finite. The result then follows from Proposition~\ref{prop:twists}.
\end{proof}

\begin{remark}\label{remark:lang_vojta} We give an (imprecise) explanation of how Theorem \ref{thm:finiteness_for_fanos} is compatible with the Lang-Vojta conjecture; see \cite{Abr, CHM}, \cite[Conj.~F.5.3.6]{HindrySilverman}, and \cite[Conj.~6.1]{JL}. This can be made more precise using the techniques from \cite[\S 6]{JL}.

Let $B$ be a smooth quasi-projective   scheme over $\CC$, and let $  X\to B$ be a non-isotrivial Fano threefold in $\mathcal F_{r,d}(B)$ with~$(r,d) \in \Tlist.$
The infinitesimal Torelli theorem (Proposition \ref{prop:inf_Torelli}) and  properties of  period maps with injective differential (see  \cite[Lem.~6.3]{JL}) imply that $B$ is in fact Brody hyperbolic, i.e.~all  holomorphic maps $\CC\to B(\CC)$ are constant.  In particular, the   stack $\mathcal F_{r,d,\CC}$ is Brody hyperbolic. Ignoring stacky issues, the Lang-Vojta conjecture predicts that the moduli stack of such Fano threefolds has only finitely many (isomorphism classes of) integral points, which Theorem \ref{thm:finiteness_for_fanos} indeed confirms.
\end{remark}

\begin{proposition} \label{prop:125}
Let $K$ be  a number field and let $S$ be a finite set of finite places of $K$. The set of $K$-isomorphism classes of Fano threefolds $X$ over $K$ with Picard number $1$, index $2$, degree $5$ and good reduction outside of $S$ is finite.
\end{proposition}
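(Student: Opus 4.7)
The strategy is to combine the strong geometric rigidity of such Fano threefolds, namely that they are all geometrically isomorphic to a single $V_5$ (Lemma \ref{lem:classification_251}) and are infinitesimally rigid (Corollary \ref{cor:vanishing_of_H1}), together with a standard finiteness theorem for non-abelian \'{e}tale cohomology of reductive group schemes over rings of $S$-integers due to Gille--Moret-Bailly \cite{GilleMoretBailly}. This parallels the treatment of flag varieties in \cite{JL2}.

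Fix once and for all a Fano threefold $V$ over $K$ of Picard number $1$, index $2$, and degree $5$; for instance, the quintic del Pezzo threefold, obtained as a codimension three linear section of the Grassmannian $\Gr(2,5)$ defined over $\QQ$ and then base-changed to $K$. By enlarging the finite set $S$ if necessary, one may assume that $B := \Spec \OO_K[S^{-1}]$ satisfies the following: (i) the integer $N$ appearing in Lemma \ref{lem:isom_is_smooth} and Lemma \ref{lem:locally_isom} is invertible on $B$; (ii) $V$ extends to a good model $\mathcal V \to B$ in $\mathcal F_{2,5}(B)$; and (iii) the $B$-group scheme $G := \Aut_B(\mathcal V)$, which is smooth, affine, and of finite type over $B$ by Lemma \ref{lem:isom_is_smooth}, is in fact a reductive group scheme over $B$. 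Item (iii) is achievable because $\Aut_K(V)$ is a $K$-form of $\PGL_2$ (a classical computation for the quintic del Pezzo threefold, cf.~Mukai--Umemura), hence reductive, and reductivity spreads out.

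Now let $Y$ be an arbitrary Fano threefold over $K$ with Picard number $1$, index $2$, degree $5$, and good reduction outside $S$. By Lemma \ref{lem:good_is_good}, $Y$ admits a good model $\mathcal Y \to B$. Lemma \ref{lem:classification_251} gives $Y_{\bar K} \cong V_{\bar K}$, and Lemma \ref{lem:locally_isom} upgrades this to an \'{e}tale-local isomorphism of $\mathcal Y$ with $\mathcal V$ over $B$. Thus $\mathcal Y$ is an \'{e}tale twist of $\mathcal V$ and so determines a class in $\mathrm{H}^1_{\et}(B, G)$. Passing to the generic fibre defines a map from $\mathrm{H}^1_{\et}(B, G)$ to the set of $K$-isomorphism classes in the statement of the proposition, and this map is surjective by the construction above.

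It therefore suffices to prove that $\mathrm{H}^1_{\et}(B, G)$ is finite. This is precisely the content of Gille--Moret-Bailly's finiteness theorem for non-abelian \'{e}tale cohomology of reductive group schemes over the ring of $S$-integers of a number field \cite{GilleMoretBailly}. The main obstacle in the argument is ensuring that $G$ really is reductive over $B$ after enlarging $S$; once this is established, the remaining ingredients are formal consequences of the preceding lemmas.
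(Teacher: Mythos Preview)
Your proof is correct and follows essentially the same strategy as the paper: use Lemma~\ref{lem:locally_isom} to exhibit every good model as an \'etale twist of a fixed one, then deduce finiteness of the relevant $\mathrm{H}^1$ from Gille--Moret-Bailly.

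The one substantive difference is the step you flag as ``the main obstacle'', namely arranging that $G=\Aut_B(\mathcal V)$ is reductive over $B$. The paper avoids this entirely: it invokes \cite[Prop.~5.1]{GilleMoretBailly}, which gives finiteness of $\mathrm{H}^1(B,G)$ for \emph{any} affine group scheme of finite type over $\OO_K[S^{-1}]$, with no reductivity or smoothness hypothesis. Since $\Aut_B(\mathcal X)$ is affine and of finite type by Corollary~\ref{cor:autom_groups_are_affine}, the finiteness is immediate. Your detour through reductivity (via the Mukai--Umemura identification $\Aut(V_5)\cong \PGL_2$ and spreading out) is correct but unnecessary; dropping it also removes the need to fix a particular model $V$ in advance, since one can simply take $\mathcal X$ to be any object of $\mathcal F_{2,5}(B)$, as the paper does.
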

\begin{proof} Choose $N$ as in Lemma \ref{lem:locally_isom}. We may assume that $S$ contains all the places of $K$ lying over the prime divisors of $N$. Write $B=\Spec \OO_K[S^{-1}]$.   By Lemma \ref{lem:openness} and Lemma \ref{lem:good_is_good}, it suffices to show that the set of $B$-isomorphism classes of Fano threefolds $X\to B$ of Picard number $1$, index $2$ and degree $5$ is finite. 
 
 Let $X$ be such a Fano threefold over $B$. By Lemma \ref{lem:locally_isom} the set of $B$-isomorphism classes of   Fano threefolds over $B$ of Picard number $1$, index $2$ and degree $5$ is a subset of $\mathrm{H}^1 (B,\Aut_B(X))$. As $\Aut_B(X)$ is affine and of finite type (Corollary \ref{cor:autom_groups_are_affine}), this cohomology set is finite by \cite[Prop.~5.1]{GilleMoretBailly}. This concludes the proof.
\end{proof}

\begin{proof}[Proof of Theorem \ref{theorem1}] Let $B=\Spec \OO_K[S^{-1}]$ and recall the classification given in Section \ref{section:clas}.   Since Fano threefolds with Picard number $1$ and  index $>2$ are flag varieties, the set of such Fano threefolds  with good reduction over $B$ is finite \cite[Thm.~1.4]{JL2}. The remaining cases follow from Theorem \ref{thm:finiteness_for_fanos} and Proposition \ref{prop:125}. 
\end{proof}

\begin{proof}[Proof of Theorem \ref{theorem2}]
This follows immediately from Theorem \ref{thm:finiteness_for_fanos}.
\end{proof}

\subsection{Sextic surfaces} \label{sec:sextic}

In this section we give an application of our results to smooth sextic surfaces. Here we use the following notion of good reduction, which agrees with the notion of good reduction used in \cite{JL}.
 \begin{definition} Let $B$ be an integral noetherian scheme with function field $K$. Let $X$ be a smooth sextic surface over $K$. 
 
A \emph{good model} for $X$ over $B$ is a smooth sextic surface over $B$ together with a choice of isomorphism $\mathcal{X}_K \cong X$. We say that $X$ has \emph{good reduction} over $B$ if, for all $v \in B^{(1)}$, the sextic $X$ has a good model over $\Spec \mathcal O_{B,v}$.
 \end{definition}

Here, by a smooth sextic surface over $B$, we mean a smooth proper scheme over $B$ in $\PP^{3}_B$ which is the zero locus of a non-zero global section of the sheaf $\pi_* \OO_{\PP^{3}_B}(6)$, where $\pi: \PP^{3}_B \to B$ denotes the natural projection.

We will deduce Theorem \ref{theorem3} from the following more general result.

\begin{theorem}\label{thm:sextics}
Let $B$ be an arithmetic scheme. Then the set of $B$-isomorphism classes of smooth sextic surfaces in $\mathbb P^3_B$ is finite.
\end{theorem}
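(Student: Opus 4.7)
The plan is to reduce Theorem \ref{thm:sextics} to the finiteness result Theorem \ref{thm:finiteness_for_fanos} for Fano threefolds of Picard number $1$, index $1$ and genus $2$, via the correspondence between smooth sextic surfaces in $\PP^3$ and such Fano threefolds, namely the double cover of $\PP^3$ ramified along the sextic. Given a smooth sextic $X \subset \PP^3_B$, I would form the cyclic double cover $\pi_X : Y_X \to \PP^3_B$ branched along $X$, that is, $Y_X = \Spec_{\PP^3_B}(\mathcal O \oplus \mathcal O(-3))$ with algebra structure given by the defining equation of $X$. Since $X$ is smooth over $B$ and $\PP^3_B$ is smooth over $B$, the cover $Y_X$ is smooth over $B$; adjunction gives $-K_{Y_X} = \pi_X^{\ast}\mathcal O(1)$, which is ample, so $Y_X \to B$ is a Fano scheme. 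Checking invariants on geometric fibres using the classification of Section \ref{section:clas} shows that each geometric fibre has Picard number $1$, index $1$ and genus $2$.

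Applying Theorem \ref{thm:finiteness_for_fanos} with $(r,d)=(1,2) \in \Tlist$ to the generic fibre, setting $K = \mathrm{Frac}(B)$, yields that the set of $K$-isomorphism classes of Fano threefolds over $K$ of Picard number $1$, index $1$ and genus $2$ with good reduction over $B$ is finite. Since $Y_X \to B$ is itself a Fano scheme, $Y_{X,K}$ has good reduction over $B$, and combined with the uniqueness of good models (Lemma \ref{lem:unicity}, valid because $(1,2) \in \Slist$), this gives finitely many $B$-isomorphism classes among the $Y_X$.

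It then remains to bound the fibre of the assignment $[X]\mapsto [Y_X]$. Given a Fano scheme $Y$ over $B$ of the above type, the sextics $X \subset \PP^3_B$ with $Y_X\cong_B Y$ correspond to a choice of covering involution $\sigma$ on $Y$ with $Y/\sigma \cong \PP^3_B$, together with such an identification. By Lemma \ref{lem:finite_isoms} the scheme $\Aut_B(Y) \to B$ is finite, so the set of admissible $\sigma$ is finite; and for each such $\sigma$, the identification $Y/\sigma \cong \PP^3_B$ is unique up to $\PGL_4(B)$, producing sextics that are all projectively equivalent over $B$ and in particular $B$-isomorphic. This proves Theorem \ref{thm:sextics}, and Theorem \ref{theorem3} follows by taking $B=\Spec\mathcal O_K[S^{-1}]$ and enlarging $S$ if necessary so that the good-reduction hypothesis produces a smooth model over $B$. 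The main difficulty lies in this last fibre bound: handling non-generic $Y$ whose automorphism scheme may carry several involutions giving rise a priori to non-isomorphic sextics, and verifying that the residual ambiguity in identifying $Y/\sigma$ with $\PP^3_B$ is exactly a $\PGL_4(B)$-twist and hence does not inflate the count of $B$-isomorphism classes.
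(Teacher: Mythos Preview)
Your approach coincides with the paper's: build the double cover $Y_X$ of $\PP^3_B$ branched along $X$, apply Theorem~\ref{thm:finiteness_for_fanos} with $(r,d)=(1,2)$, and control the fibres of $X\mapsto Y_X$. Two technical points need care. First, both your cyclic-cover construction (smoothness of $Y_X$ over $B$ in residue characteristic~$2$) and your appeal to Lemma~\ref{lem:unicity} require certain primes to be invertible on $B$; the paper handles this by first reducing to $K$-isomorphism classes via unicity of good models for sextic surfaces \cite[Lem.~4.7]{JL}, then shrinking $B$ to an affine open on which $6$ is invertible and working with an explicit defining polynomial $f$. Second, $Y_X$ is not canonical in $X$: the algebra structure on $\mathcal{O}\oplus\mathcal{O}(-3)$ depends on a choice of defining section, and scaling $f$ by a non-square unit of $B$ yields a genuinely different double cover---the paper flags this explicitly. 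Neither point undermines your fibre-bound argument via counting involutions in the finite group scheme $\Aut_B(Y)$, which is in fact more detailed than the paper's closing ``the required finiteness now easily follows''.
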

\begin{proof} 
We  prove the result by combining Theorem \ref{thm:finiteness_for_fanos} with some of the tools from our paper on complete intersections \cite{JL}. Let $K$ be the function field of $B$ and let $K\to \bar K$ be an algebraic closure. By the unicity of good models \cite[Lem.~4.7]{JL}, it suffices to show that the set of $K$-isomorphism classes of smooth sextic surfaces which have a good model over $B$ is finite. In particular, we may assume that $B=\Spec A$ is affine and that $6$ is invertible in $A$, so that a good model is defined by some homogeneous equation of degree $6$ over $A$. 

Let $X$ be a smooth sextic surface over $K$ with a good model $\mathcal X_f$ in $\mathbb P^3_B$, defined by some homogeneous polynomial $f$ of degree $6$ over $A$. 
Consider the scheme $\mathcal{Y}_f$ in $\PP_B(3,1,1,1,1)$ over $B$ defined by $y^2 = f(x)$. This is a Fano threefold over $B$ of Picard number $1$, index $1$ and genus $2$ (see Section \ref{section:clas}). Note that this construction depends on the choice of good model for $X$; different choices may give rise to non-isomorphic $\mathcal{Y}_f$ (e.g.~simply multiply the equation of $f$ by a non-square in the unit group $A^*$). Nevertheless, by Theorem \ref{thm:finiteness_for_fanos}, we see that as $\mathcal{X}_f$ varies over all possible good models for each such smooth sextic $X$, we obtain only finitely many $B$-isomorphism classes for the $\mathcal{Y}_f$. 
The required finiteness now easily follows.
\end{proof}

\begin{proof}[Proof of Theorem \ref{theorem3}] Write $B=\Spec \OO_K[S^{-1}]$. Shrinking $B$ if necessary, 
we may assume that $\Pic B =0$. Hence  any smooth sextic surface $X$ over $K$ with good reduction over $B$ has a good model over $B$ by  \cite[Lem.~4.8]{JL} (this is an analogue of Lemma \ref{lem:good_is_good} for complete intersections). The result then follows from Theorem \ref{thm:sextics}.
\end{proof}

\begin{remark}\label{remark:phil}
The idea of the proof of Theorem \ref{thm:sextics} can be captured by the following ``correspondence'' of complex algebraic stacks
\[\xymatrix{ & \mathcal F_{1,2,\CC} \ar[dl] \ar[dr] & \\ \mathcal C_{(6;2),\CC} & & \mathcal A_{52,\CC}.} \]
Here $\smash{\mathcal F_{1,2,\CC}\to \mathcal C_{(6;2),\CC}}$ is the $\mu_2$-gerbe which associates to a Fano threefold in $\mathcal F_{1,2,\CC}$ the branch locus of the associated anticanonical double covering of $\PP^3$, and  $\mathcal F_{1,2,\CC}\to \mathcal A_{52,\CC}$ is the quasi-finite morphism of algebraic stacks which associates to a Fano threefold in $\mathcal F_{1,2,\CC}$ its intermediate Jacobian; the algebraicity of the intermediate Jacobian as a morphism of analytic stacks follows from  Borel's theorem (see \cite[Thm.~3.10]{Borel1972} or \cite[Thm.~5.1]{DeligneK3}). It is this correspondence which allows to relate sextic surfaces to abelian varieties, and, in particular, allows us to prove the Shafarevich conjecture for sextic surfaces using the Shafarevich conjecture for abelian varieties.
\end{remark}

\section{Counter-examples: infinitude and non-density} \label{section:CE}
The aim of this section is to prove Theorem \ref{thm:CE} and Theorem \ref{thm:non_density1}. To do this, we shall consider the class of Fano varieties obtained by blowing up lines on smooth complete intersections of two quadrics in $\PP^5$. These threefolds appear as No.~19 of Table 12.3 of \cite{IskPro} in the classification of Fano threefolds, and have index $1$, degree $26$ and third Betti number $4$. 

We will require certain results on such threefolds proved in \cite{IskPro}. However, the authors of \emph{loc.~cit}.~work throughout over a field of characteristic $0$. Nonetheless, one can check that the results from \emph{loc.~cit}.~that we require in this section continue to hold over fields of characteristic not equal to $2$.

\subsection{A counter-example}\label{section:infinite}
Let $k$ be a field of characteristic not equal to $2$
and let $X$ be a smooth complete intersection of two quadrics in $\PP^5_k$.
The lines in $X$ are parametrised by a smooth proper surface over $k$, which is a torsor
under the intermediate Jacobian $J(X)$ of $X$ (see e.g.~\cite[Thm.~17.0.1]{CF96}).
Let $L \subset X$ be a line on $X$ (if it exists). The blow-up $\Bl_L X$ of $X$ at $L$ is a Fano threefold
of Picard number $2$ \cite[Prop.~3.4.1]{IskPro}.

\begin{lemma} \label{lem:E}
	Let $X$ be a smooth complete intersection of two quadrics in $\PP_k^5$ which contains
	a line $L \subset X$. Denote by $\pi:\Bl_L X \to X$ the blow-up of $X$ at $L$ with exceptional divisor $E$.
	Then $E$ is the unique non-trivial reduced divisor on $X$ with the property $E^3 = 0$.
\end{lemma}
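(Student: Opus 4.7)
My plan is to exploit the explicit description of the Picard group and intersection ring of $\Bl_L X$. Since $X$ has Picard rank one generated by the hyperplane class $H_X \in \Pic(X)$, and since blowing up a smooth curve adds a single free factor, $\Pic(\Bl_L X) = \ZZ H \oplus \ZZ E$ with $H := \pi^* H_X$. Any divisor class on $\Bl_L X$ is therefore $aH + cE$ for unique $a, c \in \ZZ$, and the problem reduces to pinning down the possible pairs $(a,c)$ for which $(aH+cE)^3 = 0$ and then analysing effectiveness and reducedness (note that I am interpreting the statement, which literally reads ``on $X$'', as referring to divisors on $\Bl_L X$).

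The first step is to compute the four intersection numbers on $\Bl_L X$. Three are immediate: $H^3 = H_X^3 = 4$ (the degree of a smooth complete intersection of two quadrics in $\PP^5$); $H^2 \cdot E = 0$ by the projection formula, since $\pi_* E = 0$; and $H \cdot E^2 = -H_X \cdot L = -1$, using $\pi_* E^2 = -[L]$ and $H_X \cdot L = 1$ (as $L$ is a line in $\PP^5$). For $E^3$, the standard blow-up formula gives $E^3 = -\deg \mathcal N_{L/X}$. Since $X$ is Fano of index $2$, $K_X = -2 H_X$; combined with $L \cong \PP^1$ and $H_X \cdot L = 1$, adjunction $\deg K_L = K_X \cdot L + \deg \det \mathcal N_{L/X}$ gives $-2 = -2 + \deg \mathcal N_{L/X}$, so $\deg \mathcal N_{L/X} = 0$ and hence $E^3 = 0$.

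Expanding $(aH+cE)^3$ with these numbers yields $4a^3 - 3ac^2 = a(4a^2 - 3c^2)$. The Diophantine equation $4a^2 = 3c^2$ admits only the trivial integer solution (a short infinite descent forces $3 \mid a$ and $3 \mid c$), so $D^3 = 0$ immediately gives $a = 0$; the class of $D$ is therefore a multiple of $E$.

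For the final step, let $D$ be a non-trivial effective reduced divisor of class $cE$. Applying $\pi_*$ gives $\pi_* D = c\, \pi_* E = 0$, so every prime component of $D$ is $\pi$-exceptional. But the only $\pi$-exceptional prime divisor on $\Bl_L X$ is $E$ itself, so $D = mE$ as actual divisors for some integer $m \geq 1$, and reducedness forces $m = 1$. Hence $D = E$. I expect the most delicate point to be the intersection numbers $H \cdot E^2$ and $E^3$, where the adjunction-based computation of $\deg \mathcal N_{L/X}$ is the essential input; once the intersection table is in hand, both the Diophantine step and the reducedness argument are essentially immediate.
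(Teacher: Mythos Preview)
Your proof is correct and follows essentially the same approach as the paper: both compute the intersection numbers $H^3=4$, $H^2E=0$, $HE^2=-1$, $E^3=0$ (you derive them via adjunction, the paper cites \cite[Lem.~2.2.14]{IskPro}), expand $(aH+cE)^3=a(4a^2-3c^2)$ to force $a=0$, and then argue that the only reduced effective divisor in the remaining classes is $E$. The only cosmetic difference is in this last step: you push forward by $\pi$ to see that any such divisor is $\pi$-exceptional (be careful to note that $\pi_*D=c\,\pi_*E$ holds a priori only in $\mathrm{CH}^1(X)$, and then use that an effective divisor with trivial class on the projective variety $X$ is zero), whereas the paper uses $E\cdot f=-1$ for a ruling fibre $f$ to show $\mathrm{H}^0(\Bl_L X,bE)=k$ for $b>0$.
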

\begin{proof}
	The Picard group of $\Bl_L X$ is the free abelian group generated by $E$ and $H^*:=\pi^* H$, where $H$ denotes the class
	of the hyperplane section on $X$. By \cite[Lem.~2.2.14]{IskPro} we have
	$$(H^*)^3 = 4, \, (H^*)^2 \cdot E = 0, \, (H^*) \cdot E^2 = -1, \, E^3 = 0.$$
	It follows that for any class $a H^* + b E \in \Pic (\Bl_L X)$ we have
	$$(aH^* + b E)^3 = 4a^3 - 3ab^2 = a(4a^2 - 3b^2).$$
	Hence for the above intersection number to be $0$, we must have $a = 0$.
	However, if $f$ denotes the class in the Chow group $A^2(\Bl_L X)$ of a fibre of the ruling of $E$,
	then by \cite[Lem.~2.2.14]{IskPro} we have $E \cdot f = -1$. It follows that $\mathrm{H}^0(\Bl_L X, bE) = k$
	for all $b > 0$, which proves the result. 
\end{proof}

We now study the isomorphism classes of these varieties. Let $X_1$ and $X_2$
be smooth complete intersections of two quadrics in $\PP_k^5$ which each
contain a line  $L_i \subset X_i$.
Denote by $\Isom( (X_1,L_1), (X_2, L_2))$ the scheme of isomorphisms from $X_1$ to $X_2$ which map $L_1$ to $L_2$.
From the universal property of blow-ups \cite[Prop.~7.14]{Har77}, we obtain a natural morphism of schemes
\begin{equation} \label{def:natural}
	\Isom( (X_1,L_1), (X_2, L_2)) \to \Isom( \Bl_{L_1} X_1, \Bl_{L_2} X_2).
\end{equation}

\begin{lemma} \label{lem:iso_L}
	The morphism \eqref{def:natural} is an isomorphism.
\end{lemma}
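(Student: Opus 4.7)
The plan is to prove the isomorphism by constructing an explicit inverse to \eqref{def:natural} on $T$-valued points for an arbitrary $k$-scheme $T$, with the intrinsic characterization of the exceptional divisor from Lemma \ref{lem:E} as the key input. Injectivity is essentially formal: any two $T$-isomorphisms $(X_1,L_1)_T \to (X_2,L_2)_T$ inducing the same isomorphism of blow-ups must agree over the dense open complement of $(L_1)_T$ (where $\pi_{1,T}$ is an isomorphism), hence everywhere by separatedness of $(X_2)_T$ over $T$.

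For surjectivity, let $\phi \colon (\Bl_{L_1}X_1)_T \to (\Bl_{L_2}X_2)_T$ be a $T$-isomorphism. Applying Lemma \ref{lem:E} on geometric fibres of $T$ (and using that intersection numbers in flat families of smooth proper varieties are locally constant), I would conclude that $\phi^{-1}(E_2)_T$ and $(E_1)_T$ coincide on every geometric fibre, hence coincide as relative effective divisors. Since $X_i$ has index $2$ (so $K_{X_i} = -2H_i$) and $\pi_i$ is the blow-up of a smooth curve in a smooth threefold, the blow-up formula gives $-K_{\Bl_{L_i}X_i/T} = 2H_i^{\ast} - E_i$ in the relative Picard group. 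Combined with the facts that $\phi$ preserves the relative canonical class and that the Picard group of $\Bl_{L_i} X_i$ is torsion-free, this yields $\phi^{\ast} H_2^{\ast} = H_1^{\ast}$.

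To descend $\phi$, I would use that $\pi_{i\ast} \OO = \OO$ together with the projection formula to identify $\mathrm{H}^0(\Bl_{L_i}X_i, H_i^{\ast}) = \mathrm{H}^0(X_i, H_i)$; the linear system $|H_i^{\ast}|$ then factors as $\Bl_{L_i}X_i \xrightarrow{\pi_i} X_i \hookrightarrow \PP^5$, where the second arrow is the embedding defined by $|H_i|$ (using projective normality of the smooth complete intersection $X_i$). The identification $\phi^{\ast}H_2^{\ast} \cong H_1^{\ast}$ now induces a linear isomorphism of ambient $\PP^5_T$'s carrying $(X_1)_T$ onto $(X_2)_T$ as closed subschemes, producing the desired isomorphism $f\colon (X_1)_T \to (X_2)_T$ that fits into a commutative diagram with $\phi$ over $\pi_{1,T}$ and $\pi_{2,T}$. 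Since $\pi_i$ contracts $E_i$ onto $L_i$, the isomorphism $f$ sends $(L_1)_T$ to $(L_2)_T$, yielding the required element of $\Isom((X_1,L_1),(X_2,L_2))(T)$, and by construction this element maps to $\phi$ under \eqref{def:natural}.

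The main obstacle is the transfer of Lemma \ref{lem:E}, which is formulated fibrewise over a field, to the relative setting: once one knows that $\phi^{\ast}E_2 = E_1$ in families, everything else is a functorial consequence of the canonical bundle formula and the projection formula. This transfer reduces to standard facts about flatness and constancy of intersection numbers, so while it requires care in bookkeeping, it presents no genuine geometric difficulty.
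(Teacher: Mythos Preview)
Your argument is correct, but it takes a genuinely different route from the paper's proof. The paper does not attempt a functorial construction on $T$-points. Instead, it first invokes two external inputs: the vanishing $\mathrm{H}^0(X_i,\Theta_{X_i})=0$ (from Benoist's work, using $\mathrm{char}\,k\neq 2$), which forces both $\Isom$-schemes to be \'etale over $k$, and the finiteness of $\Isom((X_1,L_1),(X_2,L_2))$ over $k$ (from Lemma~\ref{lem:finite_isoms}). These reduce the problem to checking bijectivity on geometric points, where Lemma~\ref{lem:E} and the canonical bundle formula are applied directly over an algebraically closed field, with no relative bookkeeping required.

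Your approach avoids both of these external inputs and is in that sense more self-contained; it would work even without knowing in advance that the $\Isom$-schemes are \'etale or that the source is finite. The price is the extra care needed in the relative setting: transferring Lemma~\ref{lem:E} to fibres, arguing that two flat relative divisors agreeing fibrewise coincide, and handling the fact that $\phi^* H_2^*\cong H_1^*$ holds only in the relative Picard group (equivalently, up to a $2$-torsion twist from $\Pic T$). You correctly note that this last point is harmless for the induced map of $\PP^5$-bundles. Both proofs ultimately rest on the same geometric core, namely Lemma~\ref{lem:E} together with $-K_{\Bl_{L_i}X_i}=2H_i^*-E_i$; the difference is purely in how the passage from this fibrewise statement to the scheme-theoretic isomorphism is organised.
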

\begin{proof}
	As $2\in k^\ast$, by \cite[Thm.~3.1~and~Prop.~3.7]{Ben13} we have $\mathrm{H}^0(X_i,\Theta_{X_i})=0$. It follows that
	the schemes $\Isom( (X_1,L_1), (X_2, L_2)) $ and $\Isom( \Bl_{L_1} X_1, \Bl_{L_2} X_2)$ are both smooth over $k$.
	Moreover $\Isom( (X_1,L_1), (X_2, L_2)) $ is finite over $k$ by Lemma \ref{lem:finite_isoms}. Hence it
	suffices to show that  \eqref{def:natural} is a bijection on geometric points.
	
	It is clearly injective. To show surjectivity, consider 
	$f \in \Isom( \Bl_{L_1} X_1, \Bl_{L_2} X_2)$. Denote by $\pi_i$ the corresponding blow-up map,
	by $E_i$ the exceptional divisor and by $H_i^*:=\pi_i^* H$. By Lemma~\ref{lem:E}
	we have $f(E_1)= E_2$. Moreover, clearly $f^*K_{\Bl_{L_2} X_2} = K_{\Bl_{L_1} X_1}$. Also,
	we have $-K_{\Bl_{L_i} X_i} = -\pi_i^* K_{X_i}  - E_i= 2H_i^* - E_i$. 
	Therefore, as $\Pic(\Bl_{L_1}X_1)$ is a free abelian group, we find that
	$f^*H_2^* = H_1^*$. Thus, there exists an automorphism $g:\PP^5_k \to \PP^5_k$
	such that the following diagram
	$$
	\xymatrix{\Bl_{L_1} X_1 \ar[r]^f \ar[d]^{\pi_1} &	\Bl_{L_2} X_2 \ar[d]^{\pi_2} \\
	X_1 \ar@{^{(}->}[d] & X_2 \ar@{^{(}->}[d] \\
	\PP^5_k \ar[r]^{g} & \PP^5_k }
	$$ commutes. The result follows.
\end{proof}

We now prove the following more precise version of Theorem \ref{thm:CE}.

\begin{theorem}\label{theorem:counter-example}
The set of $\Qbar$-isomorphism classes of Fano threefolds $X$ over $\mathbb Q$ with Picard number $2$, index $1$, degree $26$, third Betti number $4$, and good reduction outside away from $2$ and $29$ is infinite.
\end{theorem}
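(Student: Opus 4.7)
The strategy will be to explicitly exhibit a smooth intersection $X$ of two quadrics in $\PP^5_\QQ$ with good reduction outside $\{2,29\}$ whose Fano surface of lines $F_1(X)$ has infinitely many $\QQ$-rational points, and then blow up at these lines to produce the required infinite family. To do so, I would look for a pencil of quadrics defined over $\ZZ[1/(2 \cdot 29)]$ whose discriminant locus in $\PP^1$ consists of six distinct points over this base (ensuring smoothness of the intersection over $\ZZ[1/(2 \cdot 29)]$), and whose associated intermediate Jacobian $J(X)$ is an abelian surface isogenous to a product $E \times E'$ of elliptic curves having conductor supported on $\{2,29\}$, with at least one factor of positive Mordell--Weil rank. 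The elliptic curve of conductor $29$ with Mordell--Weil rank $1$ would be the natural candidate. Since $F_1(X)$ is a torsor under $J(X)$ with a rational point (which can be arranged by choice of pencil), it is isomorphic to $J(X)$ as a $\QQ$-variety, so $F_1(X)(\QQ)$ is an infinite set.

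Fix such an $X$ and a good model $\mathcal X \to B := \Spec \ZZ[1/(2 \cdot 29)]$. For each line $L \subset X$ defined over $\QQ$, I would use Hilbert scheme arguments to show $L$ extends uniquely to a relative line $\mathcal L \subset \mathcal X$, which is a regular closed subscheme flat over $B$ whose fibres are lines in the fibres of $\mathcal X$. Blowing up along this smooth centre yields a smooth proper $B$-scheme $\Bl_\mathcal{L} \mathcal X$ with generic fibre $\Bl_L X$, and by \cite[Prop.~3.4.1]{IskPro} (valid in the characteristics we consider) each geometric fibre is a Fano threefold of Picard number $2$, index $1$, degree $26$, and third Betti number $4$. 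Thus $\Bl_L X$ has good reduction over $B$, so outside $\{2,29\}$, and carries the claimed invariants.

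Finally, to extract infinitely many $\Qbar$-isomorphism classes from the infinite set $F_1(X)(\QQ)$, I would apply Lemma \ref{lem:iso_L} over $\Qbar$: for lines $L_1,L_2 \subset X$, an isomorphism $\Bl_{L_1}X_{\Qbar} \cong \Bl_{L_2}X_{\Qbar}$ corresponds to an element of $\Aut(X_{\Qbar})$ sending $L_1$ to $L_2$. Since the stack $\mathcal C_{(2,2;3)}$ is separated (so $\Isom$-schemes are finite by Lemma \ref{lem:finite_isoms}), the group $\Aut(X_{\Qbar})$ is finite, and its orbits on $F_1(X)(\Qbar)$ are therefore finite. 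Consequently the infinitely many $\QQ$-lines on $X$ produce infinitely many $\Qbar$-isomorphism classes of blow-ups, yielding the theorem.

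The main obstacle I anticipate is the first step: producing an explicit intersection of two quadrics over $\ZZ[1/(2\cdot 29)]$ whose intermediate Jacobian has a $\QQ$-factor of positive rank with prescribed conductor support. This requires writing down a pencil of quadrics, computing the discriminant sextic in $\PP^1$, identifying its Jacobian (by a classical construction of Reid--Donagi), and verifying rank-positivity of an appropriate elliptic quotient. The remaining arithmetic-geometric steps (extending lines to the integral model, invariance of invariants under blow-up, orbit-finiteness) are essentially formal consequences of results already assembled in the paper.
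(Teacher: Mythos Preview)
Your proposal is correct and follows essentially the same approach as the paper: construct a smooth intersection of two quadrics in $\PP^5_\QQ$ with good reduction outside $\{2,29\}$ and infinitely many $\QQ$-lines, extend each line to the integral model and blow up, then use Lemma~\ref{lem:iso_L} together with the finiteness of $\Aut(X_{\Qbar})$ to extract infinitely many $\Qbar$-isomorphism classes. The only difference is that the paper dispatches the step you flag as the main obstacle by citing an explicit example from \cite[Ex.~3.6]{BL15}, rather than carrying out the construction of the pencil and its Jacobian by hand.
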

\begin{proof}
Let $X$ be a smooth complete intersection of two quadrics   in $\PP^5_{\QQ}$ 
which contains infinitely many lines over $\QQ$, and has good reduction away from $S:=\{2,29\}$;  the example given in \cite[Ex.~3.6]{BL15} satisfies these properties.  Choose a good model $\mathcal{X} \subset \PP^5_B$
for $X$ over $B:=\ZZ[S^{-1}]$; this exists by \cite[Lem.~4.8]{JL}. Let $L \subset X$ be a line. As the relative scheme of lines $\mathcal{L}(\mathcal{X}/B)$ is proper over $B$, there exists a unique relative line $\mathscr L$ on $\mathcal X$ such that $\mathscr L_\QQ =L$. 
As the blow-up $\Bl_\mathscr{L} \mathcal{X}$ is a good model for $\Bl_L X$ over $B$,  the Fano threefold $\Bl_L X$ has good reduction over $B$.
However, by Lemma \ref{lem:finite_isoms} we know that $\Aut(X)$ is finite. A simple application of 
Lemma \ref{lem:iso_L} therefore shows that as $L$ runs over the infinitely many lines of $X$ defined over $\QQ$,
we obtain infinitely many $\bar{\QQ}$-isomorphism classes for the $\Bl_L X$. This gives the required counter-examples.
\end{proof}

\begin{proof}[Proof of Theorem \ref{thm:CE}] This clearly follows from Theorem \ref{theorem:counter-example}.
\end{proof}

\begin{remark}
Note that the Fano threefolds constructed in the proof of Theorem \ref{theorem:counter-example} do not satisfy the infinitesimal Torelli theorem. Indeed, they each have the same intermediate Jacobian, as they differ by the blow-up of a curve of genus zero on a fixed smooth intersection of two quadrics in $\PP^5_\CC$ (see \cite[Lem.~3.11]{CG72}). The global Torelli theorem for such intersections \cite[Cor.~3.4]{Don80} shows that the period map for the Fano threefolds appearing in the proof of Theorem \ref{theorem:counter-example} has two-dimensional smooth proper fibres. 
\end{remark}

\begin{remark}
	Our method does not allow us to take $S = \emptyset$ in the statement of Theorem \ref{thm:CE}. 
	In our proof of Theorem \ref{theorem:counter-example} 
	we require $S$ to contain all primes of bad reduction for $X$,
	however, a simple application of the theorem of Abrashkin \cite{Abrashkin} and Fontaine \cite{Fontaine} 
	shows that there is no smooth odd-dimensional complete intersection
	of two quadrics over $\QQ$ with everywhere good reduction.
\end{remark}

\subsection{Non-density of integral points} Although the set of (isomorphism classes of) $\ZZ[1/58]$-integral points on the stack of Fano threefolds with Picard number $2$, index $1$, degree $26$ and third Betti number $4$ is infinite (by Theorem \ref{theorem:counter-example}), we now show that its $\OO_K[S^{-1}]$-points are never dense (see Theorem \ref{thm:non_density2}).

\subsubsection{Lines on intersections of two quadrics}
Let $\mathcal C:=\mathcal C_{(2,2;3)} = \mathcal{F}_{2,4}$ be the stack of smooth intersections of two quadrics in $\mathbb P^5$ (see \cite{Ben13,JL}). Note that $\mathcal C$ is a smooth finite type separated algebraic stack over $\ZZ$, and that $\mathcal C_{\ZZ[1/2]}$ is Deligne-Mumford over $\ZZ[1/2]$ (see \cite[Thm.~1.6]{Ben13}). Let $\mathcal U\to \mathcal C$ be the universal family, and let $\mathcal L\to \mathcal C$ be the stack of lines on $\mathcal U\to \mathcal C$. More precisely, for $S$ a scheme, the groupoid $\mathcal L(S)$ has objects pairs $(X\to S, L)$ with $X\to S$ in $\mathcal C(S)$ and $L$ a line on $X$ over $S$, and a morphism from $(X\to S,L)$ to $(X'\to S',L')$ in $\mathcal L$ is a morphism from $X\to S$ to $X'\to S'$ in $\mathcal C$ which sends the line $L$ to   the line $L'$.

\begin{lemma}\label{lem:smoothproper} The forgetful morphism $\mathcal L\to \mathcal C$ is representable by schemes, smooth and proper.  
\end{lemma}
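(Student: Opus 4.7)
The plan is to verify the three properties after passing to a smooth atlas. Fix a scheme $S$ together with a morphism $S\to\mathcal C$ classifying a family $\pi\colon X\to S$ in $\mathcal C(S)$; it suffices to show that $F_1(X/S):=\mathcal L\times_{\mathcal C}S$ is a scheme proper and smooth over $S$.

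For representability and properness, the essential input is the index-$2$ condition: $\omega_{X/S}^{-1}$ admits a square root $\mathcal H$ \'etale locally on $S$, and by Kodaira vanishing combined with cohomology and base change, $\pi_{\ast}\mathcal H$ is locally free of rank $6$. After an \'etale base change we thus obtain a closed embedding $X\hookrightarrow\mathbb P^5_S$ exhibiting $X$ as a relative complete intersection of two quadrics. With respect to this embedding, $F_1(X/S)$ is naturally the closed subscheme of $\mathrm{Gr}(2,6)_S$ cut out by the vanishing of the two defining quadrics along the universal line, hence it is a projective (in particular proper) $S$-scheme \'etale locally; these properties descend.

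For smoothness I will apply the infinitesimal lifting criterion for the Hilbert scheme: it suffices to prove that for every geometric point $s\in S$ and every line $L\subset X_s$ one has $H^1(L,N_{L/X_s})=0$. Writing $H$ for the half-anticanonical class on $X_s$, so that $H\cdot L=1$, adjunction gives $\deg N_{L/X_s}=(-K_{X_s})\cdot L-2=2-2=0$, whence $\chi(N_{L/X_s})=2$. On the other hand, by the classical analysis of lines on smooth intersections of two quadrics (cf.\ \cite[Thm.~17.0.1]{CF96}, valid in characteristic $\neq 2$), the Fano surface $F_1(X_s)$ is a torsor under an abelian surface, hence is smooth of dimension $2$; therefore $\dim H^0(L,N_{L/X_s})=2$, and combining with $\chi=2$ yields $H^1(L,N_{L/X_s})=0$.

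The main obstacle I anticipate is not a specific calculation but confirming that the classical smoothness of the Fano surface of lines transfers to the positive-characteristic setting we need. The disclaimer at the start of this section --- that the required results of \cite{IskPro} continue to hold in characteristic $\neq 2$ --- together with the references collected in \cite{CF96}, is intended to cover precisely this, so once that input is granted, what remains is routine.
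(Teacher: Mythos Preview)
Your proof is correct and follows essentially the same strategy as the paper's terse argument: Hilbert scheme theory for representability and properness, and the known smoothness of the Fano surface of lines on a smooth intersection of two quadrics for the smoothness of $\mathcal L\to\mathcal C$ (the paper simply cites \cite[Prop.~3.3.5(i)]{IskPro}, whereas you route through \cite[Thm.~17.0.1]{CF96}). Your deduction of $H^1(L,N_{L/X_s})=0$ from the smoothness of $F_1(X_s)$ is logically valid but slightly roundabout, since the standard proof of that very smoothness already computes $N_{L/X_s}$ directly and shows it is $\mathcal O\oplus\mathcal O$ or $\mathcal O(1)\oplus\mathcal O(-1)$, whence $H^1=0$ immediately.
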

\begin{proof}  
By the theory of Hilbert schemes, the morphism of stacks $\mathcal L\to \mathcal C$ is representable by schemes and proper. That it is smooth is a well-known property of intersections of two quadrics \cite[Prop.~3.3.5(i)]{IskPro}.
\end{proof}

\subsubsection{A stack of Fano threefolds}
Let $\mathcal M$ be the stack of Fano threefolds with Picard number $2$, index $1$,  degree $26$ and third Betti number $4$. It follows from Lemma \ref{lem:algebraic} and Lemma \ref{lem:openness} that $\mathcal M$ is a locally finite type algebraic stack over $\ZZ$ with affine finite type diagonal.  
There is a  morphism
\begin{equation}\label{morphism}
\mathcal L_{\ZZ[1/2]}\to \mathcal M_{\ZZ[1/2]}
\end{equation}  given by sending a pair $(X\to S,L)$ to the blow-up of $X$ at $L$. 
\begin{lemma}\label{lem:isom_of_stacks}
 The morphism $\mathcal L_{\ZZ[1/2]} \to \mathcal M_{\ZZ[1/2]}$ in \eqref{morphism} is an isomorphism of stacks.
\end{lemma}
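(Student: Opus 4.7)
The plan is to show that the morphism $F:\mathcal L_{\ZZ[1/2]}\to \mathcal M_{\ZZ[1/2]}$ of \eqref{morphism} is an isomorphism of algebraic stacks by verifying it is both fully faithful and essentially surjective for the \'etale topology.

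For full faithfulness, given two objects $(X_i\to S, L_i)\in \mathcal L(S)$, one has the induced morphism of affine finite type $S$-schemes (Corollary~\ref{cor:autom_groups_are_affine})
\[
\Isom_S\bigl((X_1,L_1),(X_2,L_2)\bigr) \longrightarrow \Isom_S\bigl(\Bl_{L_1}X_1,\, \Bl_{L_2}X_2\bigr).
\]
Since $2$ is invertible on $S$, one has $\mathrm{H}^0(X_{i,s},\Theta_{X_{i,s}})=0$ on every geometric fibre by \cite[Thm.~3.1,~Prop.~3.7]{Ben13}, and Lemma~\ref{lem:iso_L} applied fibrewise transports this vanishing to $\Bl_{L_i}X_i$; hence both Isom-schemes are unramified quasi-finite over $S$, and therefore \'etale. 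A morphism of \'etale $S$-schemes that is bijective on geometric fibres is an isomorphism, and the fibrewise bijectivity is precisely the content of Lemma~\ref{lem:iso_L}.

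For essential surjectivity, given $Y\to S$ in $\mathcal M(S)$, I must construct \'etale-locally on $S$ a pair $(X\to S, L)\in \mathcal L(S)$ with an isomorphism $\Bl_L X\cong Y$. Over any geometric fibre, the classification of Fano threefolds \cite[Table 12.3, No.~19]{IskPro} produces such a presentation $Y_s\cong \Bl_{L_s}X_s$, and Lemma~\ref{lem:E} characterises the exceptional divisor $E_s$ intrinsically as the unique non-trivial reduced divisor with $E_s^3=0$. To carry out this construction in families, note that Lemma~\ref{lem:vanishing} yields $\mathrm{H}^i(Y_s,\OO_{Y_s})=0$ for $i=1,2$, so the relative Picard scheme $\Pic_{Y/S}$ is \'etale over $S$. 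After an \'etale base-change, I may lift the fibrewise class of $E_s$ to a line bundle on $Y$ whose unique global section cuts out a divisor $\mathcal E\subset Y$, and lift the fibrewise identity $2H^*=-K_Y+E$ to a class $\mathcal H^*\in \Pic(Y)$ with $\mathcal H^{*\otimes 2}\cong \omega_{Y/S}^{-1}\otimes \OO_Y(\mathcal E)$. The morphism associated to the relative linear system $|\mathcal H^*|$ then contracts $\mathcal E$ and realises $Y$ as the blow-up of a smooth intersection of two quadrics $X\subset \PP^5_S$ along the image line $L:=\phi(\mathcal E)$; the verification reduces fibrewise to the classical blow-up picture recalled in Lemma~\ref{lem:E}.

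The main obstacle is the relative step in essential surjectivity: promoting Lemma~\ref{lem:E}'s intrinsic characterisation from geometric fibres to the total space, and showing that the induced linear system actually contracts $\mathcal E$ onto a line in an intersection of two quadrics. Both rely on the Fano vanishings of Lemma~\ref{lem:vanishing} to obtain \'etaleness of $\Pic_{Y/S}$ and compatibility of direct images with base change; once these are in place, the classical geometry over each geometric point assembles into the desired global inverse construction.
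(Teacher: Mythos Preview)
Your overall strategy matches the paper's (one-line) proof: full faithfulness via Lemma~\ref{lem:iso_L}, essential surjectivity via the classification of Fano threefolds of Picard number two. Your write-up is considerably more detailed than the paper's, and your essential-surjectivity sketch (spreading out the classes of $E$ and $H^*$ using the \'etaleness of $\Pic_{Y/S}$ from Lemma~\ref{lem:vanishing}, then contracting via $|\mathcal H^*|$) is a reasonable outline of what the paper leaves implicit.

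There is, however, a genuine gap in your full-faithfulness argument. You assert that both Isom-schemes are ``unramified quasi-finite over $S$, and therefore \'etale''. This implication is false: unramified does not imply flat (any non-open closed immersion is a counterexample). In the case at hand there is no reason for $\Isom_S(\Bl_{L_1}X_1,\Bl_{L_2}X_2)\to S$ to be flat---for instance, two non-isomorphic objects can specialise to isomorphic ones, so the fibre length can jump. Consequently you cannot invoke the principle that a fibrewise bijection between \'etale $S$-schemes is an isomorphism.

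The fix is already present in your essential-surjectivity argument; you just need to redeploy it. Rather than arguing indirectly via \'etaleness, show directly that the morphism of Isom-schemes is bijective on $T$-points for \emph{every} $S$-scheme $T$. Injectivity is immediate. For surjectivity, given an isomorphism $g:(\Bl_{L_1}X_1)_T\to(\Bl_{L_2}X_2)_T$, the \'etaleness of the relative Picard scheme (Lemma~\ref{lem:vanishing}) together with Lemma~\ref{lem:E} applied on each geometric fibre forces $g^*[E_{2,T}]=[E_{1,T}]$ in $\Pic$, and since $h^0(E_s)=1$ on fibres one gets $g^{-1}(E_{2,T})=E_{1,T}$ as divisors. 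Then $g^*H_2^*=H_1^*$ (from $-K=2H^*-E$), so $g$ descends to an isomorphism of the contractions $X_{1,T}\to X_{2,T}$ carrying $L_{1,T}$ to $L_{2,T}$. This is exactly the relative version of the proof of Lemma~\ref{lem:iso_L}, and it gives full faithfulness over arbitrary bases without any flatness claim for the Isom-schemes.
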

\begin{proof}
This follows from  Lemma \ref{lem:iso_L} and the classification of Fano threefolds with Picard number two~\cite{SaitoFano}.
\end{proof}

We now formulate and prove a more precise version of Theorem \ref{thm:non_density1}.

\begin{theorem}\label{thm:non_density2}
Let $K$ be a number field and $S$ a finite set of finite places of $K$.
Then there is a proper closed substack $\mathcal Z$ of $\mathcal M$ such that any morphism $\Spec\OO_K[S^{-1}]\to \mathcal M$ factorises over the inclusion $\mathcal Z\to \mathcal M$.
\end{theorem}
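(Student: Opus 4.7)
The strategy is to exploit the smooth proper fibration $\mathcal M_{\ZZ[1/2]} \cong \mathcal L_{\ZZ[1/2]} \to \mathcal C_{\ZZ[1/2]}$ furnished by Lemma \ref{lem:isom_of_stacks} and Lemma \ref{lem:smoothproper}, combined with the Shafarevich-type finiteness of Theorem \ref{theorem1} applied to $\mathcal C = \mathcal F_{2,4}$. Non-density for the base $\mathcal C$ will force non-density for the total space $\mathcal M$ via a dimension count: $\dim \mathcal M = 6$ while the relevant closed substack will have dimension at most $3$.

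First I would reduce to the case in which $S$ contains every place of $K$ above $2$. Set $S' = S \cup \{v \mid 2\}$ and suppose $\mathcal Z$ works for $S'$: then for any $\psi : \Spec \OO_K[S^{-1}] \to \mathcal M$, the restriction to $\Spec \OO_K[(S')^{-1}]$ factors through $\mathcal Z$, so the closed subscheme $\psi^{-1}(\mathcal Z) \subset \Spec \OO_K[S^{-1}]$ contains the schematically dense open $\Spec \OO_K[(S')^{-1}]$ and hence equals all of $\Spec \OO_K[S^{-1}]$. Write $R := \OO_K[S^{-1}]$; as $2 \in R^\times$, Lemma \ref{lem:isom_of_stacks} supplies an isomorphism $\mathcal M_R \cong \mathcal L_R$.

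Next I would apply Theorem \ref{theorem1} to $\mathcal F_{2,4} = \mathcal C$ to obtain finitely many isomorphism classes $X_1, \dots, X_n$ in $\mathcal C(R)$, classified by morphisms $\phi_i : \Spec R \to \mathcal C$. Let $\mathcal W \subset \mathcal C$ denote the scheme-theoretic image of $\bigsqcup_i \phi_i$, a closed substack of dimension at most one (the source is one-dimensional and each $\phi_i$ is representable). Set $\mathcal Z_R := \mathcal W \times_{\mathcal C} \mathcal L \subset \mathcal L_R$; since $\mathcal L \to \mathcal C$ is smooth proper of relative dimension two (Lemma \ref{lem:smoothproper} together with the well-known fact that the Fano surface of lines on a smooth intersection of two quadrics in $\PP^5$ is a smooth surface), $\mathcal Z_R$ has dimension at most three. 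Transporting $\mathcal Z_R$ through $\mathcal L_R \cong \mathcal M_R$, let $\mathcal Z$ be its scheme-theoretic closure in $\mathcal M$. Since taking closures does not increase the dimension of irreducible components, $\dim \mathcal Z \leq 3 < 6 = \dim \mathcal M$, and $\mathcal Z$ is a proper closed substack.

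For the factoring property, any $\psi : \Spec R \to \mathcal M$ lies in the open substack $\mathcal M_R$ and, via $\mathcal M_R \cong \mathcal L_R$, corresponds to a pair $(X, L)$ with $X \in \mathcal C(R)$. By our choice of representatives, $X \cong X_i$ for some $i$, so the composition $\Spec R \to \mathcal L \to \mathcal C$ is $2$-isomorphic to $\phi_i$ and therefore factors through $\mathcal W$; hence $\psi$ factors through $\mathcal Z_R \subset \mathcal Z$. The principal obstacle will be the careful manipulation of scheme-theoretic images and closures inside algebraic stacks: in particular, one must check that factoring through $\mathcal W$ is invariant under replacing $\phi_i$ by a $2$-isomorphic morphism, so that every morphism to $\mathcal C$ representing an object isomorphic to some $X_i$ is genuinely captured by $\mathcal W$. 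This ultimately rests on the fact that $2$-isomorphic morphisms factor through the same closed substacks.
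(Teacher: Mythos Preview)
Your argument is correct and rests on the same three inputs as the paper --- the isomorphism $\mathcal L_{\ZZ[1/2]}\cong\mathcal M_{\ZZ[1/2]}$, the smooth proper fibration $\mathcal L\to\mathcal C$, and Shafarevich-type finiteness for $\mathcal C=\mathcal F_{2,4}$ --- but the paper packages them differently. It argues by contradiction: assuming the integral points of $\mathcal M$ are Zariski dense, it pushes this density down to $\mathcal C$ along the smooth proper surjection $\mathcal L\to\mathcal C$, then invokes unicity of good models (Lemma~\ref{lem:unicity}) to deduce that $\mathcal C(\OO_K[S^{-1}])$ is infinite, contradicting Theorem~\ref{thm:finiteness_for_fanos}. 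This sidesteps scheme-theoretic images, closures, and dimension bookkeeping in stacks entirely. Your direct construction, by contrast, actually names $\mathcal Z$ as the preimage in $\mathcal L$ of the closure of finitely many integral points of $\mathcal C$, which is more informative but needs more care with the stacky details.

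Two small points to tighten. First, Theorem~\ref{theorem1} only yields finitely many $K$-isomorphism classes; to conclude that $\mathcal C(R)$ has finitely many isomorphism classes you must also invoke unicity of good models (Lemma~\ref{lem:unicity}), and for this $S$ should be enlarged so that the integer $N$ of that lemma is invertible, not just $2$. Second, there is some slippage between $\mathcal L_R$ and $\mathcal L_{\ZZ[1/2]}$: your $\mathcal W\times_{\mathcal C}\mathcal L$ is naturally a closed substack of $\mathcal L_{\ZZ[1/2]}$ (since $\mathcal W$, being the image of $\Spec R$ with $2\in R^\times$, lies over $\ZZ[1/2]$), and the closure in $\mathcal M$ then makes sense because $\mathcal M_{\ZZ[1/2]}\hookrightarrow\mathcal M$ is an open immersion --- whereas $\mathcal M_R$ is a base change, not a substack of $\mathcal M$. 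Finally, $\dim\mathcal M_\QQ=5$ rather than $6$ (the moduli of smooth $(2,2)$-intersections in $\PP^5$ is $3$-dimensional and the fibres of $\mathcal L\to\mathcal C$ are surfaces), though your inequality survives.
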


\begin{proof}
We may assume that $S$ contains all places above $2$.
Suppose that there is no such  substack, so that there exist a number field $K$ and a finite set of finite places $S$ of $K$ such that $\mathcal M(\OO_K[S^{-1}])$ is Zariski dense. In particular, since the stack $\mathcal L_{\ZZ[1/2]}$ is isomorphic to $\mathcal M_{\ZZ[1/2]}$ (Lemma \ref{lem:isom_of_stacks}), we see that $\mathcal L(\OO_K[S^{-1}])$ is Zariski dense. Next, note that $\mathcal L\to \mathcal C$ is a  smooth proper surjective morphism (Lemma \ref{lem:smoothproper}). In particular, since $\mathcal L(\OO_K[S^{-1}])$ is dense in $\mathcal L$, it follows that $\mathcal C(\OO_K[S^{-1}])$ is dense in $\mathcal C$. As $\mathcal C$ is separated over $\ZZ$, Lemma~\ref{lem:unicity} then implies that the set of Fano threefolds with Picard number $1$, index $2$ and degree $4$ over $K$ with good reduction outside $S$ is infinite. This contradicts our finiteness result for such Fano threefolds (Theorem \ref{thm:finiteness_for_fanos}).
\end{proof}


\begin{remark}\label{remark:campana} 
We finish with an (imprecise) explanation of how Theorem \ref{thm:non_density2} is in accordance with Campana's generalization of the Lang-Vojta conjecture (see \cite[Conj.~13.23]{Campana}).  Our proof of Theorem \ref{thm:non_density2} shows that the stack $\mathcal M$   admits a smooth fibration over the hyperbolic stack $\mathcal C$, all of whose geometric fibres are abelian surfaces. In particular, this implies that the moduli stack $\mathcal M$ is \emph{non-special} in the sense of Campana \cite[Def.~8.1]{Campana}. 
Ignoring stacky issues, Theorem \ref{thm:non_density2} is therefore in accordance with Campana's generalization of the Lang-Vojta conjecture \cite[Conj.~13.23]{Campana} which asserts that a variety is \emph{non-special} if and only if for all number fields $K$ and finite sets of finite places $S$ of $K$, its set of $\OO_K[S^{-1}]$-points is  not dense. 
\end{remark}

\bibliography{refsci}{}
\bibliographystyle{plain}

\end{document}